\documentclass[english,a4paper,12pt]{article}
\usepackage{a4}
\usepackage{amsmath}
\usepackage{amsthm}
\usepackage{latexsym}
\usepackage{amssymb}
\usepackage{babel}
\usepackage[latin1]{inputenc}
\usepackage{fancyhdr,array}
\usepackage[pdftex]{graphicx}
\addtolength{\textwidth}{30pt} \addtolength{\evensidemargin}{-10pt}
\addtolength{\oddsidemargin}{-10pt}
\parindent=12pt
\parskip=12pt
\topmargin -2mm
\newcommand{\1}{\hbox{ 1 \hskip -7pt I}}

\allowdisplaybreaks
\usepackage{setspace}
\doublespacing
\usepackage{indentfirst}
\setlength{\parindent}{2em}

\newtheorem{definition}{Definition}[section]
\newtheorem{proposition}{Proposition}[section]
\newtheorem{lemma}{Lemma}[section]
\newtheorem{theorem}{Theorem}[section]
\newtheorem{corollary}{Corollary}[section]

\newtheorem{question}{Question}[section]

\title{Reinforcement learning in signaling game}
\author{Yilei Hu$^1$, Brian Skyrms$^2$ and Pierre Tarr\`es$^3$}

\newcommand{\Cc}{\mathcal{C}}
\newcommand{\Gc}{\mathcal{G}}
\newcommand{\Sc}{\mathcal{S}}

\newtheorem{corol}{Corollary}
\newtheorem{rema}{Remark}
\newtheorem{exa}{Example}

\renewcommand{\le}{\leqslant}

\renewcommand{\ge}{\geqslant}

\newcommand{\bal}{\begin{align*}}
\newcommand{\eal}{\end{align*}}
\newcommand{\beq}{\begin{eqnarray*}}
\newcommand{\eeq}{\end{eqnarray*}}
\newcommand{\bte}{\begin{theorem}}
\newcommand{\ete}{\end{theorem}}
\newcommand{\bl}{\begin{lemma}}
\newcommand{\el}{\end{lemma}}
\newcommand{\bd}{\begin{description}}
\newcommand{\ed}{\end{description}}

\newcommand{\bc}{\begin{cases}}
\newcommand{\ec}{\end{cases}}
\newcommand{\bp}{\begin{proof}}
\newcommand{\ep}{\end{proof}}
\newcommand{\bco}{\begin{corol}}
\newcommand{\eco}{\end{corol}}

\newcommand{\iy}{\infty}
\newcommand{\tx}{\text}
\newcommand{\R}{\ensuremath{\mathbb{R}}}

\newcommand{\N}{\ensuremath{\mathbb{N}}}

\newcommand{\hV}{\hat{V}}

\newcommand{\Cst}{\mathsf{Cst}}

\newcommand{\F}{\mathcal{F}}

\newcommand{\Es}{\mathbb{E}}
\newcommand{\Pb}{\mathbb{P}}

\newcommand{\Ff}{\mathbb{F}}
\newcommand{\Gf}{\mathbb{G}}

\newcommand{\Nc}{\mathcal{N}}

\newcommand{\Gg}{\mathcal{G}}

\newcommand{\g}{\gamma}

\newcommand{\G}{\Gamma}

\newcommand{\De}{\Delta}

\newcommand{\e}{\epsilon}

\newcommand{\tet}{\theta}

\newcommand{\la}{\lambda}
\newcommand{\La}{\Lambda}

\newcommand{\s}{\sigma}

\def\bdes{\begin{description}}
\def\edes{\end{description}}

\begin{document}
\maketitle
\footnotetext[1]{University of Oxford, Mathematical Institute, 
24-29 St Giles, Oxford OX1 3LB, United Kingdom. E-mail: {\tt Yilei.Hu@maths.ox.ac.uk.}}
\footnotetext[2]{School of Social Sciences, University of California at Irvine, CA 92607. E-mail: {\tt bskyrms@uci.edu}}
\footnotetext[3]{CNRS, Universit\'e de Toulouse, Institut de Math\'ematiques,
118 route de Narbonne, 31062 Toulouse Cedex 9, France. On leave from the Mathematical Institute, University of Oxford.
E-mail: {\tt tarres@math.univ-toulouse.fr}}
{\abstract{We consider a signaling game originally introduced by Skyrms, which models 
how two interacting players learn to signal each other and thus create a common language. 
The first rigorous analysis was done by Argiento, Pemantle,
Skyrms and Volkov (2009) with 2 states, 2 signals and 2 acts. 
We study the case of $M_1$ states, $M_2$ signals and $M_1$ acts for general $M_1$, $M_2$ $\in\N$. 
We prove that the expected payoff increases in average and thus converges a.s., and that a limit bipartite graph emerges, such that no signal-state correspondence is associated to both a synonym and an informational bottleneck. 
Finally, we show that any graph correspondence with the above property is a limit configuration with positive probability.}}

\section{Introduction}

\subsection{Signaling game}
\label{sec:sg}
Signaling games aim to provide a theoretical framework for the following basic question: how do individuals create a common language?
The setting was introduced as follows  by Philosopher D. Lewis (1969).
\begin{description}
\item Consider two players, one called Sender, who is regularly given some information that the other does not have and seeks to transmit it,  and the other  called Receiver. Sender has a fixed
set of signals at his disposal throughout the game, which do not
have any intrinsic meaning  at the very beginning, in the sense that no signal is {\it a priori } associated to any
state; similarly Receiver has a fixed  set of
possible acts. The game is thereafter repeatedly
played according to following steps: (1) Sender observes a certain
state of nature, of which Receiver is not aware. (2) Sender chooses
a signal and then sends it to Receiver. (3) Receiver observes the
signal but not the state, and then chooses an act. (4) Both
players receive payoffs at the end of each round, which are
functions of state and act.
\end{description}
The process involving the above four steps is called one
\emph{communication}. Sender, Receiver, states, signals and acts
constitute one basic \emph{communication system}. The game lies in the
choices of signals and acts by the agents. Note that we do not
fix any strategy at this point; this is the purpose of Section
\ref{Reinlea}.

\subsection{Mathematical definition}\label{mathdef}
As we will explain at length later on, we adopt a dynamic perspective to investigate this game. The outcome of once-play game is not of  much interest to us. What we wonder is whether player can establish a signaling system, within which each signal is uniquely related to a certain state, if they play this game repeatedly.

In this section, we provide solid mathematical definitions for the objects appearing in  Section \ref{sec:sg} and we end with a mathematical definition of repeated signaling game.

\noindent \textbf{Probability space}. Let $(\Omega, \mathcal{F},\mathbb{P})$ be a probability space, which is a sufficiently rich source of randomness. More specifically, the probability space is at least rich enough for all the random variables appearing in this section to live.

\noindent \textbf{State spaces.} Let $\mathcal{S}_1$ be the set of states,  $\mathcal{S}_2$ be the set of signals and $\mathcal{A}$ be the set of acts.

\noindent\textbf{Players and strategies.} We here introduce Nature as a player in this game who assigns a state of nature to Sender at each round of the game. Three players, Nature, Sender  and Receiver  respectively generate a random sequence, denoting their  strategies throughout this repeated game. More specifically,
\begin{description}
  \item [(1)] Nature generates a sequence $(S_n)_{n\in\mathbb{N}}$ of  random variables taking value in the set of states $\mathcal{S}_1$, each denoting which state Nature assigns to Sender at each round of the game.
  \item[(2)] Sender generates  a sequence $(Y_n)_{n\in \mathbb{N}}$ of random variables taking in value in the set of signals $\mathcal{S}_2$, each denoting which signal Sender sends to Receiver at each round of the game.
  \item[(3)] Receiver generates a sequence $(Z_n)_{n\in \mathbb{N}}$ of random variables taking value in the set of states $\mathcal{A}$, each denoting which state Receiver interpret the signal as at each round of the game.
\end{description}

\noindent \textbf{Payoffs.} Let mappings $u^1_n$ and $u^2_n$ from $\mathcal{S}_1\times\mathcal{S}_2\times \mathcal{A}$ to $[0,\infty)$ be payoff functions for Sender and Receiver at the $n$-th round of the game. In other words, Sender and Receiver gain payoffs $u_n^1(S_n,Y_n,Z_n)$ and $u_n^2(S_n,Y_n,Z_n)$ at the end of $n$-th round of the game respectively.

 \noindent \textbf{Information.} Let  filtration $(\mathcal{F}^1_n)_{n\in\mathbb{N}}$ (resp. $(\mathcal{F}^2_n)_{n\in\mathbb{N}}$) denote the information available to Sender (resp. Receiver) before he makes his decision at each round of the game:  for each $n\in\mathbb{N}$, we let
\begin{align*}
  \mathcal{F}^1_n\,&:=\,\sigma\Big(S_i, 0\le i \le n,  \, Y_i, u_n^1(S_i,Y_i,Z_i), 0\le i\le n-1\Big),\\
  \mathcal{F}^2_n\,&:=\,\sigma\Big(Y_i, 0\le i \le n, \,   Z_i, u_n^2(S_i,Y_i,Z_i), 0\le i\le n-1\Big).
\end{align*}

\noindent \textbf{Initial settings.} The distributions of i.i.d random variables $(S_n)_{n\in\mathbb{N}}$ and the distributions of $Y_0$ and $Z_0$ are given at the beginning of the game, which are called $priori$ distribution.

\noindent\textbf{Updating rule for strategies.} Let $\mathcal{F}^1_{n}$-measurable mapping $\rho^1_n$
\begin{align}
 dist.(Y_{n+1})\,= \,\rho^1_n\Big(S_i, 1\le i \le n,  \, Y_i, u_i^1(S_i,Y_i,Z_i), 1\le i\le n-1\Big)
\end{align}
be updating rule of strategies for Sender at $n$-th round of the game.

Let $\mathcal{F}^2_{n}$-measurable mapping $\rho^2_n$
\begin{align}
 dist.(Z_{n+1})\,= \,\rho^2_n\Big(S_i, 1\le i \le n,  \, Z_i,u_i^2(S_i,Y_i,Z_i), 1\le i\le n-1\Big)\end{align} be updating rule of strategies for Receiver at $n$-th round of the game.  Different $(\rho^1_n)_{n\in\mathbb{N}}$ and $(\rho^2_n)_{n\in\mathbb{N}}$ represent different learning rules.

\begin{definition}[Signaling game]
A repeated signaling game $G$ is defined as: $$G=\Big((\Omega,\mathcal{F},\mathbb{P}), \mathcal{S}_1,\mathcal{S}_2,\mathcal{A}, (S_n,Y_n,Z_n, u^1_n,u^2_n,\rho^1_n,\rho^2_n)_{n\in\mathbb{N}}\Big).$$
\end{definition}

\subsection{Questions}
Throughout the paper, we limit our attention to a special, but most
common, circumstance under which
\begin{description}
\item[\textbf{(A1)}] the set of acts matches the
set of states of nature by a bijective map; and \item[\textbf{(A2)}] both players
only receive fixed payoffs when the act chosen by Receiver matches
the state of nature, which is considered as a \emph{successful
communication}; otherwise they obtain nothing.\end{description}

Under these assumptions, the act can be understood as an interpreted state. Note however that the Receiver does not necessarily know the possible states of nature: the mutual goal for the two players is to make the communication
succeed, but they are not always aware that they actually
coordinate each other, or even that they are involved in a
communication game.

The analysis of choices of strategies by the players
gives rise to the following
\begin{question}
What game-theoretical equilibrium is  most likely to arise in this repeated
game?
\end{question}
This issue can either be analyzed from a static perspective with classic
equilibrium analysis,  see Trapa
 and Nowak (2000), Huttegger (2007) and Pawlowitsch (2008), or in a dynamic perspective, through individual learning models or evolutionary strategies,  see for instance  Huttegger and Zollman (2010). The latter perspective investigates the evolutionary pathway out of equilibria:
\begin{question}
Does the communication system asymptotically reach a stable equilibrium
state? If so, what are the good candidates, and  how does the communication system reach them?
\end{question}

However, modeling the game through individual learning process is really an issue involving lots of factors, for instance the level of rationality of players. In particular, we are interested in the following question which is first raised by Skyrms,
\begin{question}
What is the simplest mechanism to ensure the emergence of a signaling system in this repeated game?
\end{question}

\subsection{Reinforcement learning} \label{Reinlea}
\label{sec:rl}
In this paper we adopt a dynamic perspective, based on the following individual reinforcement learning model.
\\ \vspace{0.005in}\\
 \textbf{(A3) Roth-Erev reinforcement learning rule} (or Herrnstein's matching law):\emph{ the probability of
choosing an action is proportional to its accumulated rewards.}\\
\vspace{0.005in} \\
Assumption \textbf{(A3)} actually decide the updating rule for distributions of strategies for players. The corresponding behavior is analyzed by Argiento et al. (2009) in the 2-state, 2-signal, 2-act case, who show  that an optimal signaling system emerges eventually, in the sense  of a one-to-one correspondence between states and signals.  We study here the case of $M_1$ states, $M_2$ signals and $M_1$ acts for general $M_1$, $M_2$ $\in\N$.

Note that Roth-Erev reinforcement rule is one of many possible strategies of the players, who can have various levels of rationality, each of them leading to a different learning process; for instance the myopic and best response models, see Fudenberg and Levine (1998). Let us  briefly motivate the reinforcement condition, corresponding to a low level of rationality. It is natural to believe that  individuals with high rationality, devoting themselves to the task of establishing a common language, would rapidly succeed. It is interesting to study whether on the contrary, under the only assumption that these individuals have good memory of their own past experience and aspire to a better payoff, a signaling system would also emerge, and how optimal the limiting system is. This pertains either to individuals with lower cognitive ability, or who do not devote themselves totally to the task of learning the game or take optimal decisions.

\subsection{The model}
\label{sec:model}
\subsubsection{Assumptions}

Let $G=\Big((\Omega,\mathcal{F},\mathbb{P}), \mathcal{S}_1,\mathcal{S}_2,\mathcal{A}, (S_n,Y_n,Z_n, u^1_n,u^2_n,\rho^1_n,\rho^2_n)_{n\in\mathbb{N}}\Big)$ be a signaling game as defined in Section \ref{mathdef}. Apart from assumptions \textbf{(A1)}-\textbf{(A3)}, we make further more assumptions for our model. One is about the $priori$ distributions and the other is a more detailed version of assumption $\textbf{(A2)}$.
\begin{description}
  \item[\textbf{(A4)}] States of nature are equiprobable. In other words, for each $n\in\mathbb{N}$, $S_n$ is an  independent uniformly distributed random variable. Furthermore, $Y_0$ and $Z_0$ are independent uniformly distributed random variable.
  \item[\textbf{(A5)}] Apart from assumption \textbf{(A2)} on payoffs, we here assume that payoffs are constants only dependent on types of states and acts. More precisely, $u^1_n(S_n,Y_n,Z_n)=u^2_n(S_n,Y_n,Z_n)=a_{Z_n,Y_n}\1_{\{S_n=Z_n\}}$, where $a_{i,j}$ is a positive constant, for $i\in\mathcal{S}_1, \,j\in\mathcal{S}_2$.
\end{description}

\subsubsection{The model}
Under assumptions $\textbf{(A1)}-\textbf{(A5)}$, we now present the model of the signaling game we are going to study in this paper.

Suppose there are $M_1$ states, $M_2$ signals and $M_1$ acts.  Let
$\mathcal{S}=\mathcal{S}_1\cup \mathcal{S}_2$ and, for all $d\in\N$, let
$\mathcal{S}^d:=\mathcal{S}\times\ldots\times\mathcal{S}$.

%For any $i\in\mathcal{S}_1,\,j\in\mathcal{S}_2$, \emph{strategy
%pair} $(i,j)$ refers to a combination of strategies
% that Sender chooses signals $j$ when he is given state $i$ and
% Receiver chooses act $i$ when he observes signal $j$.

Let
$\mathcal{S}_{\textnormal{pair}}:=\{(i,j)\,:\,i\in\mathcal{S}_1,\,j\in
  \mathcal{S}_2\}$ be the set of \emph{strategy pairs}. Note that a strategy
  pair $(i,j)$ carries different meanings for the  Sender and for the Receiver. For the Sender,
  it means choosing signal $j$ when he observes state $i$, while for the Receiver it means choosing act $i$ when he receives signal $j$. This strategy
  pair $(i,j)$ accumulates the same payoffs for both the Sender and
the  Receiver, since the corresponding rewards are always received at the same
  time: let $V(n,i,j)$ denote this accumulated payoff at time  $n$. For each $n\in\mathbb{N}$, let
$$V_n:=\left(V(n,i,j)\right)_{i \in
\mathcal{S}_1,\,j \in \mathcal{S}_2}$$ be the
\emph{payoff vector} at time $n$.

Let us describe the random process $(V_n)_{n\in\N}$, arising from our model for the game and its strategies in Sections \ref{sec:sg}--\ref{sec:rl}:
\begin{itemize}
\item[1] \textbf{ Initial setting.} For any $(i,j)\in\Sc_{\textnormal{pair}}$, we assume that $V(0,i,j)>0$
is fixed.

\item[2] \textbf{ Reinforcement learning.} At each time step, Sender
observes a certain state $i$ from set of states $\mathcal{S}_1$; we assume here that all states arise with equal probability $1/M_1$. Then  Sender randomly chooses a signal, his probability of drawing $j$ being
$$ \frac{V(n,i,j)}{\sum_{l\in\mathcal{S}_2}V(n,i,l)}.$$
Receiver
observes the signal he receives (let us call it $j$) and then randomly chooses an act $k$   with
probability
$$ \frac{V(n,k,j)}{\sum_{l\in\mathcal{S}_1}V(n,l,j)}.$$

\item[3] \textbf{ Updating rule.} Both Sender and Receiver receive payoffs
when the act chosen by Receiver matches the state observed by Sender. For
any $i\in\mathcal{S}_1,\,j\in\mathcal{S}_2$,
\begin{displaymath}
V(n+1,i,j):=\left\{
\begin{array}{ll}
V(n,i,j)+a_{i,j} & \textrm{if Sender observes state $i$ and chooses }\\ & \textrm{
signal $j$, and Receiver chooses act $i$;}\\
V(n,i,j) &\textrm{if else.}
\end{array} \right.
\end{displaymath}
\end{itemize}
We suppose in this paper that $a_{i,j}:=1$ for all $i\in\Sc_1$, $j\in\Sc_2$. We also assume for simplicity that $V(0,i,j)=1$, for all
$i\in\mathcal{S}_1$, $j\in\mathcal{S}_2$, but the proofs carry on to general initial conditions.

\subsubsection{Symmetrization and key processes}\label{SEC:12}
Let us symmetrize the notation, which will simplify some proofs (in particular that of Proposition \ref{Prop:Lya}): for all $i$ $j$ $\in\Sc$, let
\begin{align*} V(n,i,j)&:=\left\{\begin{array}{ll} 0 & \quad i,j \in
\mathcal{S}_1 \textrm{ or } i,j \in \mathcal{S}_2\\
V(n,i,j)& \quad i \in \mathcal{S}_1, \,j
\in \mathcal{S}_2\\
V(n,j,i)& \quad j \in \mathcal{S}_1,\, i
\in \mathcal{S}_2\end{array} \right.
\end{align*}

Now, for all $n\in\N$ and  $i\in\Sc$ state or signal, let
\begin{align*}
T_n^i:=\sum_{j \in \mathcal{S}}V(n,i,j)
\end{align*}
be its number of successes up to time $n$.

For all $n\in\N$, let
$$T_n:=\frac{1}{2}\sum_{i \in \mathcal{S}}T_n^i=\sum_{i \in
\mathcal{S}_1}T_n^i=\sum_{i \in
\mathcal{S}_2}T_n^i.$$
Then $T_n-T_0$ is the total number of successes of the communication system up to time $n$.

Let
\begin{align*}  x_{ij}^n&:= V(n,i,j)/T_{n}
\quad i,j \in
\mathcal{S},\, n \in \mathbb{N}; \\
x_i^n&:=T_n^i/T_n, \quad i \in \mathcal{S},\, n \in \mathbb{N}.
\end{align*}

For all $n\in\N$, let
$$x_n:=\left(x_{ij}^n\right)_{i,j \in
\mathcal{S}},\,n\in\mathbb{N}$$
be the \emph{occupation measure }at time $n$, which takes values in the interior
of the simplex
$$ \Delta:=\big\{(x_{ij})_{i,j\in \mathcal{S}}:
x_{ij}\ge 0, \sum_{i\in \mathcal{S}_1,~j\in \mathcal{S}_2}x_{ij}=1, x_{ji}=x_{ij}\tx{ for all }i,j\in\Sc\big\}.$$

Let us define
$$\partial\Delta:=\{\,x\in\Delta\,:\,\exists \,i\in\mathcal{S},\,s.t.\,x_i=0\,\},$$
which we call \emph{boundary} throughout the paper, although it is
not the topological boundary of $\Delta$. One of the technical difficulties in this model is the understanding of the behavior of $(x_n)_{n\in\mathbb{N}}$ near the boundary, as we shall explain later.

Given  $x\in\Delta\setminus\partial\Delta$, $i$, $j$ $\in\Sc$, let
$$y_{ij}:=\frac{x_{ij}}{x_ix_j}$$ be the \emph{efficiency}  of the strategy
pair $(i,j)$ and let
$$N_i(x):=\sum_{k\in\mathcal{S}}\frac{x_{ik}}{x_i}\cdot y_{ik}$$
be the \emph{efficiency $N_i(x)$ of
$i$.} We will justify this notation in Section \ref{sec:ode}.

Note that processes $(x_n)_{n\in\mathbb{N}}$ and $(T_n)_{n\in\mathbb{N}}$ contain all the important information of the communication system throughout the game. Therefore, to study how our model evolves, we only need to focus on the evolutions of $(x_n)_{n\in\mathbb{N}}$ and $(T_n)_{n\in\mathbb{N}}$.

\subsection{Urn setting: Another way to interpret the model}
Note that the reinforcement learning and updating rules 2 and 3 in Section \ref{sec:model} can be interpreted in an urn setting. Assume Sender has $M_1$ urns indexed by states, each of them having $M_2$ colours of balls, one per signal. Similarly assume that Receiver has $M_2$ urns indexed by signals, each of them having $M_1$ colours of balls, one per act (or state, since the two sets coincide).

The model corresponds to the following: Sender picks a ball at random in the urn indexed by the state he observes, and sends the signal given by its colour. Then receiver picks a ball at random in the urn indexed by this signal, and chooses the act given by its colour. Both Sender and Receiver put back the balls they picked and, if the act matches the state, add one more ball of the same colour.

\section{Main results}

Given $x\in\De$ distribution of strategy pairs of
the communication system, we introduce in Section \ref{sec:bgcp} the bipartite graph of state/signal connections associated to $x$, and its communication potential or efficiency, which measures the corresponding expected payoff up to a multiplicative constant. We present in Section \ref{sec:mr} the main results of the paper.

\subsection{Bipartite graph and communication potential}
\label{sec:bgcp}

 \begin{definition} \label{graph.1}
Given $x\in \Delta$, let $\Gc_x$ be the weighted bipartite graph  with vertices
$\mathcal{S}:=\mathcal{S}_1\cup\mathcal{S}_2$, adjacency $\sim$ and
weights as follows \begin{description}
\item[(1)] $\forall \,i \in \mathcal{S}_1, j \in
\mathcal{S}_2, i\sim j$ if and only if $x_{ij}>0.$ \item[(2)] The
weight of edge $\{i,j\}$ is its efficiency $y_{ij}=x_{ij}/(x_ix_j)$.
\end{description}
\end{definition}
Note that the new adjacency relation is only justified when $x$ is in the topological boundary of $\De$; otherwise,
$\Gc_x$ is then the complete $2$-partite graph with partitions $\Sc_1$ and $\Sc_2$.

\begin{definition}
Let $H:\De\longrightarrow\R_+$ be the function defined by, for all $x\in\De$,
$$H(x):=\sum_{i\in \mathcal{S}_1, \, j\in
\mathcal{S}_2:x_{ij}>0}\frac{x_{ij}^2}{x_ix_j}=\frac{1}{2}\sum_{i, j\in
\mathcal{S}:x_{ij}>0}\frac{x_{ij}^2}{x_ix_j}.$$
We call $H(x)$ communication potential or efficiency of $x$.
\end{definition}

Note the {\it communication potential} of $x_n$ at time $n$ can be interpreted - up to a multiplicative constant- as the expected payoff at that time step:
\begin{align}\label{CommPotential} \mathbb{P}(T_{n+1}-T_n=1\,|\,\mathcal{F}_n)
=\frac{1}{M_1}H(x_n),\end{align}
where $\Ff=(\F_n)_{n\in\N}$ is the filtration of the past, i.e. $\F_n:=\s(x_1,\ldots,x_n)$.

\begin{lemma}
$H$ has minimum $1$ and maximum $\min(M_1,M_2)$ on $\Delta$.
\end{lemma}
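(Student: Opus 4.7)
The plan is to treat the lower and upper bounds separately, each via a one-line inequality applied edge by edge, and then to display an explicit extremizer in each case.

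For the lower bound, I would apply Cauchy--Schwarz state by state: for each $i\in\Sc_1$ with $x_i>0$,
\[
x_i^2=\Big(\sum_{j:x_j>0}\frac{x_{ij}}{\sqrt{x_j}}\sqrt{x_j}\Big)^{\!2}\le\Big(\sum_{j:x_j>0}\frac{x_{ij}^2}{x_j}\Big)\Big(\sum_{j\in\Sc_2}x_j\Big)=\sum_{j:x_{ij}>0}\frac{x_{ij}^2}{x_j},
\]
using $\sum_{j\in\Sc_2}x_j=1$. Dividing by $x_i$ and summing over $i\in\Sc_1$ with $x_i>0$ gives $H(x)\ge\sum_i x_i=1$. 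Equality in Cauchy--Schwarz forces $x_{ij}=c_i x_j$, and summing over $j$ identifies $c_i=x_i$; so equality holds exactly when $x$ is the product measure $x_{ij}=x_i x_j$, which is achieved for instance by the uniform distribution $x_{ij}=1/(M_1M_2)$.

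For the upper bound I would use the trivial pointwise bound $x_{ij}\le x_j$ (because $x_j=\sum_{k\in\Sc_1}x_{kj}\ge x_{ij}$), which gives $x_{ij}/x_j\le 1$ whenever $x_{ij}>0$. Writing $\frac{x_{ij}^2}{x_ix_j}=\frac{x_{ij}}{x_i}\cdot\frac{x_{ij}}{x_j}$ and using this bound on the second factor,
\[
H(x)\le\sum_{(i,j):\,x_{ij}>0}\frac{x_{ij}}{x_i}=\sum_{i:\,x_i>0}\frac{1}{x_i}\sum_{j\in\Sc_2}x_{ij}=\big|\{i\in\Sc_1:x_i>0\}\big|\le M_1.
\]
The symmetric argument, bounding $x_{ij}/x_i\le 1$ instead, yields $H(x)\le M_2$; hence $H(x)\le\min(M_1,M_2)$.

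To see that this upper bound is attained, assume without loss of generality $M_1\le M_2$ and partition $\Sc_2$ into $M_1$ non-empty blocks $I_1,\ldots,I_{M_1}$, one per state. Choose any $x\in\De$ supported on $\{(i,j):j\in I_i\}$ with all these coordinates strictly positive; then each $j\in\Sc_2$ is reached by exactly one state, so $x_{ij}=x_j$ for every edge in the support, and
\[
H(x)=\sum_{i\in\Sc_1}\sum_{j\in I_i}\frac{x_j}{x_i}=\sum_{i\in\Sc_1}\frac{x_i}{x_i}=M_1.
\]
There is no real obstacle here; the only care needed is to restrict sums to indices with positive mass, but this is precisely the convention in the definition of $H$, so the boundary of $\De$ is handled transparently.
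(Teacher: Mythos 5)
Your proof is correct and follows essentially the same route as the paper: Cauchy--Schwarz for the lower bound (you apply it row by row and sum, the paper applies it globally to the whole sum, which is equivalent) and the pointwise bound $x_{ij}/x_j\le 1$ for the upper bound. Your explicit extremizers (the product measure for the minimum, a partition of $\Sc_2$ into $M_1$ blocks for the maximum) correctly establish attainment, matching the configurations the paper describes in its items (a) and (b).
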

\begin{proof} Using Cauchy-Schwartz inequality,
\begin{align*}
H(x)=\sum_{i\in\Sc_1, j\in\Sc_2:x_{ij}>0} \frac{x_{ij}^2}{x_ix_j}\sum_{i\in\Sc_1, j\in\Sc_2} x_ix_j&\ge
\left(\sum_{i\in\Sc_1, j\in\Sc_2:x_{ij}>0}
\frac{x_{ij}}{\sqrt{x_ix_j}}\sqrt{x_ix_j}\right)^2\\
&~~~~~~~~=\left(\sum_{i\in\Sc_1, j\in\Sc_2}
x_{ij}\right)^2 = 1
\end{align*}
 provides the first inequality, whereas the
second one comes from
\begin{align*}
H(x)=\sum_{i\in\Sc_1, j\in\Sc_2:x_{ij}>0} \frac{x_{ij}}{x_i}\frac{x_{ij}}{x_j}
&\le\sup_{i\in\Sc_1, j\in\Sc_2:x_{ij}>0} \frac{x_{ij}}{x_i}\sum_{i\in\Sc_1, j\in\Sc_2:x_{ij}>0}\frac{x_{ij}}{x_j} \\
&\le
\sum_{i\in\Sc_1, j\in\Sc_2:x_{ij}>0}\frac{x_{ij}}{x_j}=M_2;
\end{align*}
similarly $H(x)\le M_1$.

Now
\begin{itemize}
  \item[(a)] $H(x)$ reaches the minimum 1 if and only
  if $\mathcal{G}_x$ is a complete graph on which every edge shares
  the same weight $1$, as displayed on the figure below.

  \begin{center}
  \includegraphics[width=0.55\textwidth]{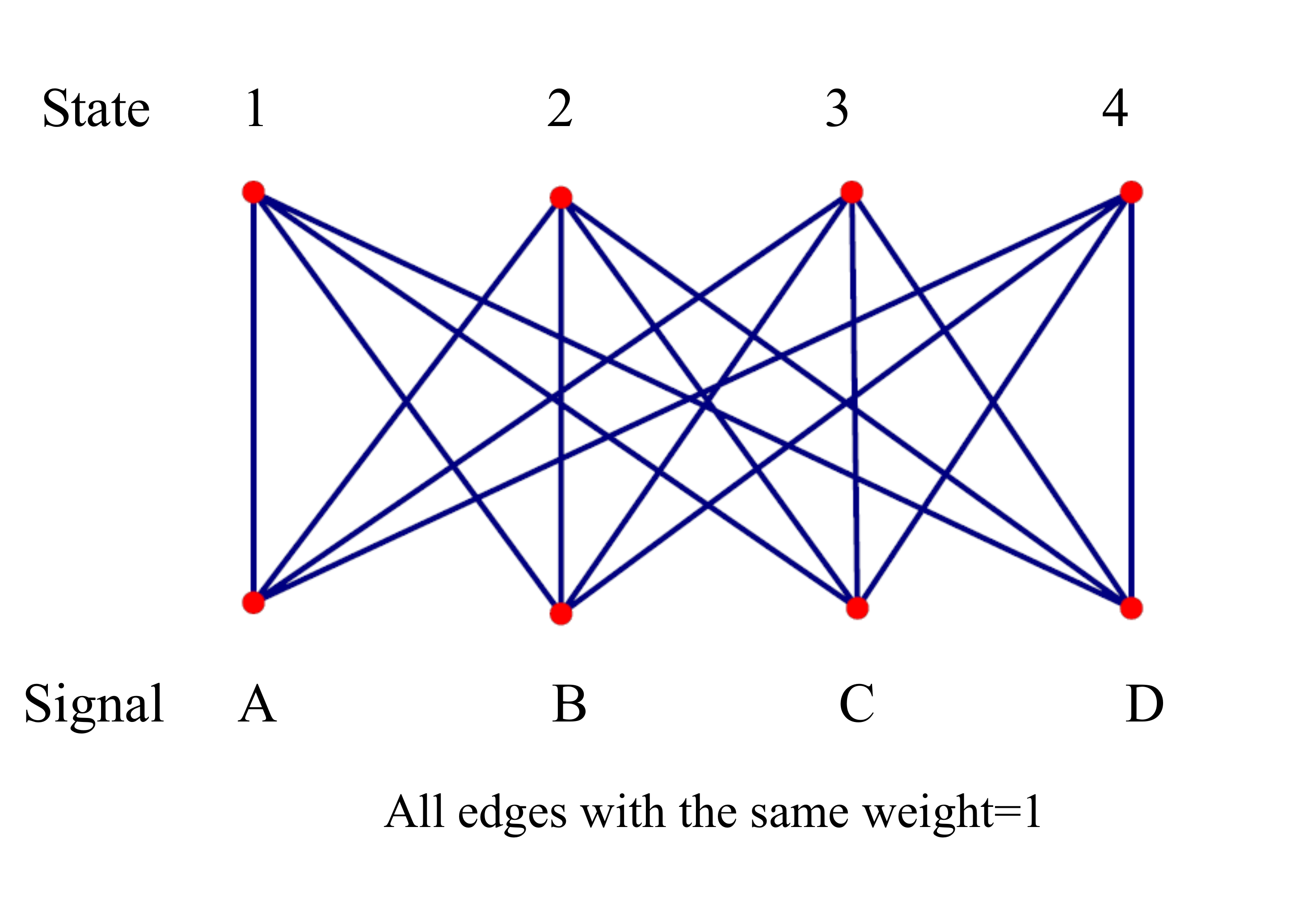}
  \end{center}

\item[(b)] If $M_1\ge M_2$ (resp. $M_1\le M_2$), then $H(x)$ reaches the maximum if and only if every vertex $i\in\Sc_1$ (resp. $\Sc_2$) only has one adjacent edge in $\Gc_x$. In the case $M_1\le M_2$, this corresponds to a unique meaning for every signal, i.e. perfect efficiency, as displayed on the figure below.
  \begin{center}\includegraphics[width=0.50\textwidth]{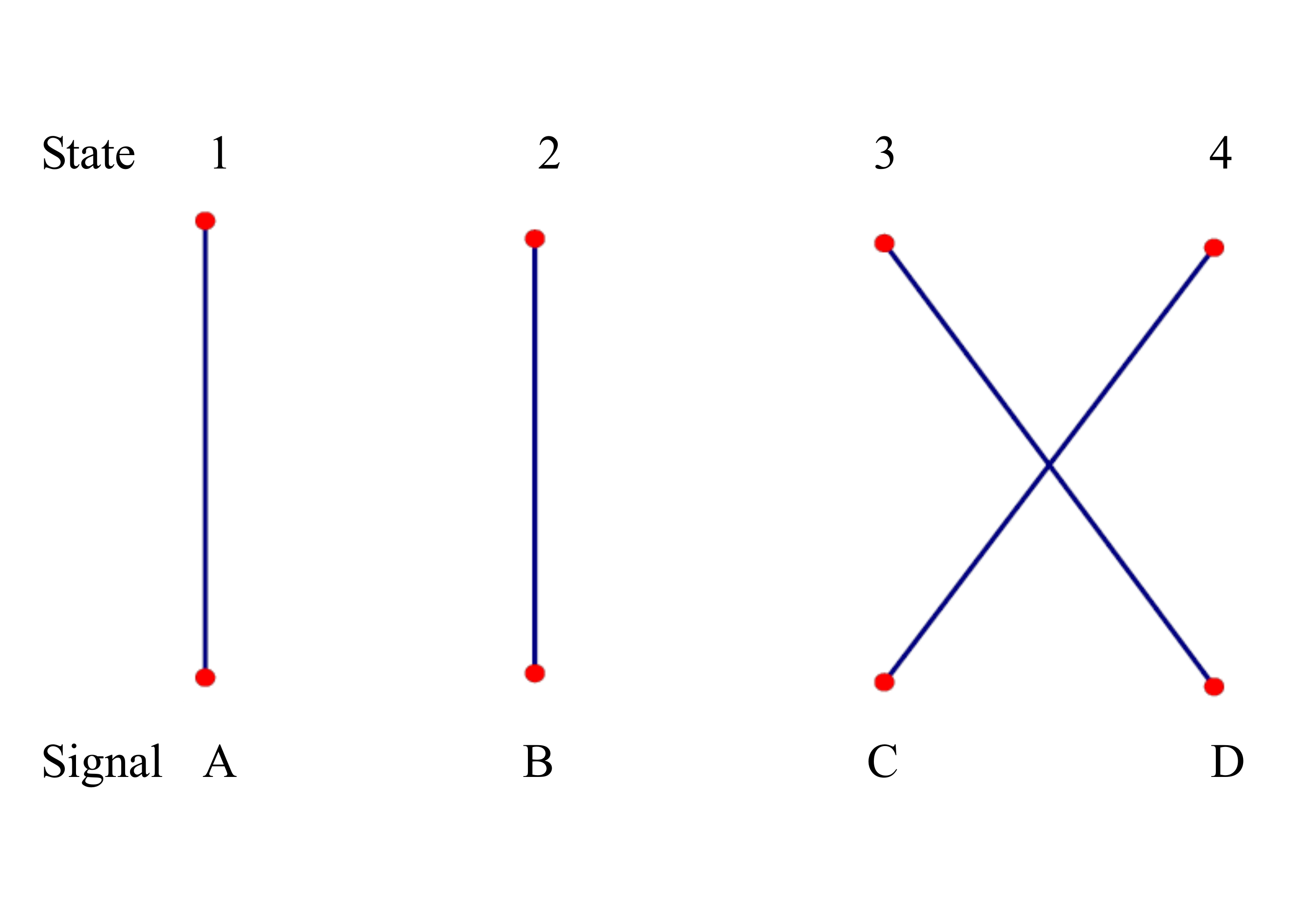}\end{center}
\end{itemize}
\end{proof}

\subsection{Main results}
\label{sec:mr}
 \begin{definition} \label{property}Given a graph $\mathcal{G}$ on
$\mathcal{S}_1\cup \mathcal{S}_2$, let $(P)_\mathcal{G}$ be the
following property:
\begin{itemize}
 \item if we let $\Cc_1$, $\ldots$ $\Cc_d$ be its connected components
then, for every $i\in\{1,\ldots,d\}$, $\Cc_i\cap\Sc_1$ or
$\Cc_i\cap\Sc_2$ is a singleton.
\item each vertex has a corresponding edge.
\end{itemize}
\end{definition}
We call \emph{synonym} (resp.\emph{ informational bottleneck} or \emph{polysemy}) a state (resp. signal)  associated to several signals (resp. states or acts), or the corresponding set of adjacent signals (resp. states). Obviously $M_1\ne M_2$ ensures the existence of at least one synonym or polysemy.

Note that, given $x\in\De$, and even if $M_1=M_2$, property  $(P)_{\Gc_x}$ allows for synonyms or informational bottlenecks, and does not ensure that the system is optimal as a communication system, i.e. that $H(x)$ reaches the maximum of $H$. Most common languages  have such flaws. We show, on the figure below, a graph $\Gc$ corresponding to a sub-optimal communication system: it is easy to check that its normalized efficiency is 80$\%$, i.e. that any $x\in\De$ such that $\Gc_x=\Gc$ is such that $H(x)/\max H=0.8$.
\begin{center}\includegraphics
[width=0.55\textwidth]{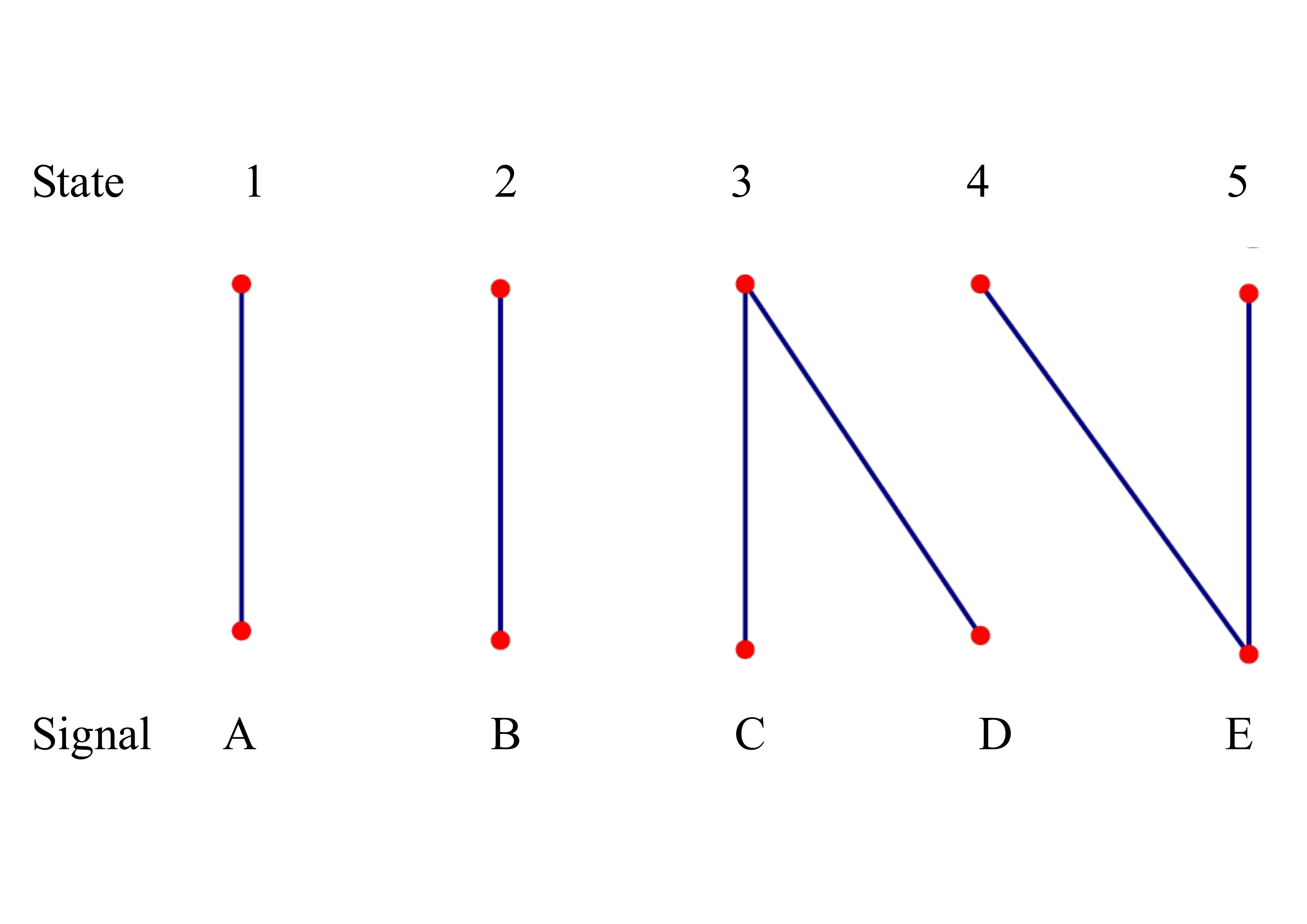}\end{center}

\begin{theorem}\label{theorem3}
 The communication potential process $(H(x_n))_{n\in\N}$ is a bounded submartingale, and hence converges a.s.
\end{theorem}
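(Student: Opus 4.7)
The plan is to verify directly that $(H(x_n))_{n\in\N}$ is a bounded submartingale and then invoke Doob's convergence theorem. Boundedness on $\Delta$ with $1 \le H \le \min(M_1, M_2)$ is the content of the lemma just proved, so the substantive step is showing $\Es[H(x_{n+1}) \mid \F_n] \ge H(x_n)$ for each $n$.

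For the drift computation I would work at the level of the unnormalised urn counts $V_{ij}:=V(n,i,j)$ with marginals $V_i := \sum_l V_{il}$ and $V_j := \sum_k V_{kj}$, exploiting that $H$ is scale-invariant so that $H(x_n)=\sum_{(i,j) \in \Sc_1 \times \Sc_2}V_{ij}^2/(V_i V_j)$. Conditional on $\F_n$, the pair $(i,j)$ is reinforced with probability $V_{ij}^2/(M_1 V_i V_j)$, producing $V\mapsto V+e_{ij}$ (adding $1$ to $V_{ij}$, $V_i$ and $V_j$); with the complementary probability $1-H(x_n)/M_1$ nothing changes. A direct expansion, separating the contribution of pair $(i,j)$ from those of pairs sharing exactly one endpoint, yields
\begin{align*}
H(V+e_{ij})-H(V)&=\frac{(V_{ij}+1)^2}{(V_i+1)(V_j+1)}-\frac{V_{ij}^2}{V_iV_j}\\
&\quad-\frac{\sum_{l\ne j}V_{il}^2/V_l}{V_i(V_i+1)}-\frac{\sum_{k\ne i}V_{kj}^2/V_k}{V_j(V_j+1)}.
\end{align*}
Write $N_i:=\sum_l V_{il}^2/V_l$ and $N_j:=\sum_k V_{kj}^2/V_k$. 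Multiplying the above by the reinforcement probability and summing over $(i,j)\in\Sc_1\times\Sc_2$, the cross-terms collapse via $\sum_j V_{ij}^2/V_j = N_i$ to produce a negative contribution $\sum_{i\in\Sc_1} N_i^2/[V_i^2(V_i+1)]$ (and symmetrically for $j\in\Sc_2$), while the remaining positive contributions regroup into a single sum with $(V_i+1)(V_j+1)$ denominators.

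Nonnegativity of the resulting expression will then come from Cauchy-Schwarz in the form
$$N_i^{\,2}=\Bigl(\sum_l (V_{il}^{3/2}/V_l)\cdot V_{il}^{1/2}\Bigr)^2\le V_i\sum_l \frac{V_{il}^3}{V_l^2},$$
possibly in a weighted variant that absorbs the factors $1/[(V_i+1)(V_l+1)]$ coming from the discrete increments. At leading order in $1/T$ this already suffices: the drift is proportional to $2\sum_{(i,j)\in\Sc_1\times\Sc_2}x_{ij}y_{ij}^2-\sum_{i\in\Sc}x_iN_i(x)^2$, which is nonnegative by the above inequality applied vertex by vertex.

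The main obstacle I anticipate is promoting this leading-order bound into an exact discrete identity. The natural strategy is to partition the positive part of the drift into one summand per vertex $i\in\Sc$, each manifestly dominating the matching negative piece $N_i^2/[V_i^2(V_i+1)]$ via a weighted Cauchy-Schwarz inside the $l$-sum; failing a clean partition, one can try to absorb the finite-$T$ residuals with AM-GM against a fraction of the positive leading term. Once $\Es[H(x_{n+1})\mid\F_n]\ge H(x_n)$ is established, a.s.\ convergence follows from the bounded-submartingale convergence theorem.
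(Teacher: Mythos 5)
Your overall strategy is the same as the paper's: compute the conditional drift of $H$ directly in terms of the urn counts $V$, split it into a ``variance--like'' piece and a finite--size correction, and show each is nonnegative. Your expansion of $H(V+e_{ij})-H(V)$ is correct, and your Cauchy--Schwarz inequality $N_i^2\le V_i\sum_l V_{il}^3/V_l^2$ is exactly what is needed for the first piece (the paper writes the same fact as the sum of squares $\tfrac12\sum_{j,k}\frac{V_{ij}V_{ik}}{V_i+1}(y_{ij}-y_{ik})^2\ge0$). Up to that point the argument is sound.

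The gap is in the correction terms, and your proposed fallback would not close it. After the exact bookkeeping (including the $l=j$ terms you drop when you ``collapse'' the cross--terms to $N_i^2$, and the full expansion of $\frac{(V_{ij}+1)^2}{(V_i+1)(V_j+1)}-\frac{V_{ij}^2}{V_iV_j}$), the residual is
$$\sum_{(i,j)\in\Sc^2}\frac{V_{ij}^2}{V_iV_j(V_i+1)(V_j+1)}\Bigl(1-\frac{2V_{ij}}{V_j}+\frac{V_{ij}^2}{V_iV_j}\Bigr),$$
which contains genuinely negative summands of order $1/(V_iV_j)$ times the leading term's own scale. Crucially, this residual \emph{cannot} be absorbed into ``a fraction of the positive leading term'': on the set $\Lambda$ where all efficiencies $y_{ij}$ at a given vertex coincide, your Cauchy--Schwarz inequality is an equality and the leading positive term vanishes identically, while the residual does not. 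So the residual must be shown nonnegative on its own. The paper does this by symmetrizing over $(i,j)\leftrightarrow(j,i)$ and observing the factorization
$$1-\frac{V_{ij}}{V_i}-\frac{V_{ij}}{V_j}+\frac{V_{ij}^2}{V_iV_j}=\Bigl(1-\frac{V_{ij}}{V_i}\Bigr)\Bigl(1-\frac{V_{ij}}{V_j}\Bigr)\ge0,$$
which holds because $V_{ij}\le V_i$ and $V_{ij}\le V_j$. This algebraic identity is the missing ingredient; without it (or an equivalent), your proof of the submartingale property is incomplete. The boundedness and the appeal to Doob's theorem at the end are fine.
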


\begin{theorem}\label{theorem5}
  $(x_n)_{n\in\mathbb{N}}$ converges to the set of equilibria of mean-field ODE almost surely.
\end{theorem}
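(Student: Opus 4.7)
The plan is to recognize $(x_n)_{n\in\N}$ as a Robbins--Monro stochastic approximation scheme, apply the ODE method of Benaim, and use $H$ from Theorem~\ref{theorem3} together with its level-set structure to identify the limit set with the equilibria. From the updating rule in Section~\ref{sec:model}, a direct computation gives
\begin{align*}
\Es[x^{n+1}_{ij}-x^n_{ij}\mid\F_n]=\frac{x^n_{ij}}{M_1(T_n+1)}\bigl(y^n_{ij}-H(x_n)\bigr),
\end{align*}
so that $x_{n+1}-x_n=\gamma_n(F(x_n)+\xi_{n+1})$ with $\gamma_n=1/(T_n+1)$, bounded martingale increments $\xi_{n+1}$, and drift $F_{ij}(x)=x_{ij}(y_{ij}-H(x))/M_1$. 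The associated mean-field ODE $\dot x=F(x)$ is a replicator-type system that preserves $\De$.

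Because $\Pb(T_{n+1}-T_n=1\mid\F_n)=H(x_n)/M_1\ge 1/M_1$, a comparison with i.i.d.\ Bernoullis shows $T_n\ge cn$ a.s.\ for some $c>0$, and hence $\sum_n\gamma_n=\infty$, $\sum_n\gamma_n^2<\infty$ almost surely. Since $F$ is smooth only in the interior of $\De$, it is natural to work with the subsequence $(x_{\tau_k})$ sampled at success times: its step size is of order $1/k$ and its drift equals $M_1 F/H$, which shares the equilibria of $F$ (and the same trajectories, up to time reparametrization). Benaim's theorem then yields that the limit set $L(\om)$ of $(x_n)$ is almost surely a non-empty, compact, connected, internally chain transitive invariant set of the flow of $F$.

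To conclude $L(\om)\subset\{F=0\}$, I would exploit that $H(x_n)\to H_\infty(\om)$ almost surely by Theorem~\ref{theorem3}, so $L(\om)\subset\{H=H_\infty(\om)\}$. Differentiating $H$ along the flow via $\dot x_i=x_i(N_i(x)-H(x))/M_1$ yields
\begin{align*}
\tfrac{d}{dt}H(x(t))=\frac{1}{M_1}\sum_{a\in\Sc}x_a\sum_{b\in\Sc}\frac{x_{ab}}{x_a}\bigl(y_{ab}-N_a(x)\bigr)^2\ge 0,
\end{align*}
the continuous counterpart of Theorem~\ref{theorem3}. On the invariant set $\{H=H_\infty\}$ this forces $y_{ab}$ to be constant in $b$ on the support of $x_{a\cdot}$ for each $a$ with $x_a>0$, equal to a common value $y^\alpha$ on each connected component $\Cc_\alpha$ of the support graph. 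A short further computation shows that $m_\alpha y^\alpha$ is a first integral of the reduced flow, where $m_\alpha:=\sum_{(i,j)\in\Cc_\alpha}x_{ij}$, and that $\dot y^\alpha=y^\alpha(H_\infty-y^\alpha)/M_1$; hence $y^\alpha\to H_\infty$ on every surviving component, so trajectories within $\{H=H_\infty\}$ converge to $\{F=0\}$, and the only internally chain transitive sets there are equilibria of $F$.

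The main obstacle is the behavior near $\partial\De$, where $F$ is singular. I would address this by stratifying $\De$ into open faces indexed by which coordinates $x_{ij}$ vanish: each face is forward invariant under $F$, the restriction of the drift is again of the same replicator form, and the Lyapunov computation above applies intact. Benaim's theorem together with the level-set analysis above therefore carry over face by face, and the monotone convergence $H(x_n)\uparrow H_\infty$ from Theorem~\ref{theorem3} controls the face on which $L(\om)$ ultimately sits, completing the proof.
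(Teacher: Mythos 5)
Your overall strategy (Robbins--Monro recursion, Benaim's limit set theorem, then a Lyapunov/level-set argument) is natural, and your drift identity and step-size estimates are correct. But there is a genuine gap at the boundary, and it is precisely the difficulty the paper's proof is built to circumvent. Benaim's theorem requires the vector field to be continuous (indeed locally Lipschitz, or at least to generate a flow) on a neighbourhood of the limit set. Here $H$, and hence $F$, is discontinuous on $\partial\De=\{x:\exists\, i,\ x_i=0\}$: along $x_{ij}=x_i=x_j=t\to0$ the term $x_{ij}^2/(x_ix_j)$ stays equal to $1$ while it is excluded from the sum at the limit point. The limit set of $(x_n)$ does in general charge $\partial\De$ and lower-dimensional faces (this is the content of Theorem \ref{theorem2}), so the hypothesis fails exactly where it matters. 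Your proposed repair --- stratify $\De$ into open faces and apply Benaim ``face by face'' --- does not work: the discrete process has all $V(n,i,j)\ge1$ and therefore never lies on a proper face; it only approaches one, and on the way the efficiencies $y_{ij}=x_{ij}/(x_ix_j)$ of the dying coordinates are not controlled. The interpolation of $(x_n)$ is not an asymptotic pseudotrajectory of the flow restricted to the limiting face, and deciding which coordinates die (and how fast) is the hard part of the problem, not something that can be read off from $H(x_n)\to H_\infty$.

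The paper therefore avoids the ODE method for this theorem altogether. Convergence to $\La=\{p=0\}$ is proved by a purely discrete drift computation (Proposition \ref{StochLya}): the predictable part of the submartingale $(H(x_n))$ dominates $p(x_n)/(2T_n)$, so being in $\De_\e$ infinitely often would force $H(x_n)\to\infty$. Since $H$ is not a strict Lyapunov function ($\La\ne\G$, Lemma \ref{restpoint}), the passage from $\La$ to $\G$ is then done by tracking the discrete evolution of $y_{ij}^n$ itself (Lemma \ref{stochapp2}) and showing $\limsup_n x_{ij}^n(y_{ij}^n-H(x_n))^-=0$ (Lemma \ref{lemma:setofequ}); combined with the identity $\sum_{i,j}x_{ij}^n(y_{ij}^n-H(x_n))=0$ this yields $F(x_n)\to0$. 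Your corresponding step --- the claimed first integral, the ODE $\dot y^\alpha=y^\alpha(H_\infty-y^\alpha)/M_1$, and the assertion that the only internally chain transitive sets in $\{H=H_\infty\}$ are equilibria --- is asserted rather than proved, and in any case it is a statement about the flow that cannot be transferred to $(x_n)$ without the pseudotrajectory property, which fails at the boundary. To make your route rigorous you would essentially have to reprove the paper's boundary estimates.
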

\noindent \textbf{Remark.} We will define the mean-field ODE in Section \ref{SEC:4:2}. Roughly speaking, the mean-field ODE is derived from the dynamics of expected movement of $(x_n)_{n\in\mathbb{N}}$.
 
%\begin{theorem}\label{theorem1}
 %The process $(x_n)_{n\in\mathbb{N}}$ converges a.s. to a random variable $x\in\De$ such that $(P)_{\mathcal{G}_x}$ holds and, for all $i$, $j$ $\in\Sc$,
%$$V(\infty,i,j)=\infty\,\iff\,\{i,j\}\tx{ is an edge of }\mathcal{G}_x.$$
%\end{theorem}
\begin{theorem}
\label{theorem2} For all $\mathcal{G}$ on $\mathcal{S}_1\cup
\mathcal{S}_2$ s.t. $(P)_{\mathcal{G}}$ holds, with positive
probability \vspace{-0.10in}\begin{description}
 \item[(a)] $x_n\to x$ s.t. $\mathcal{G}_x=\mathcal{G}$.
\item[(b)] $\forall i,j\in\Sc$, $V(\infty,i,j)=\infty\,\iff\,\{i,j\}$ is an edge of $\mathcal{G}$.
\end{description}
\end{theorem}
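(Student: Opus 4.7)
The plan is to produce a single event $E$ of positive probability on which both (i) the off-$\Gc$ payoffs $V(n,i,j)$ stay at their initial value $1$ forever, and (ii) the on-$\Gc$ dynamics decouples into a collection of independent P\'olya urns. Under $(P)_\Gc$, the graph $\Gc$ is a disjoint union of star components $\Cc_1,\ldots,\Cc_d$, each with a hub $h_k$ on one side of the bipartition and leaves $L_k$ on the other, so these two pieces of information will force $\Gc_{x_\infty}=\Gc$ and $V(\infty,i,j)=\infty\iff\{i,j\}$ is an edge of $\Gc$, yielding (a) and (b).

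\textbf{P\'olya urn structure on $E$.} Set
\[
E:=\big\{V(n,i,j)=1\text{ for all }n\ge 0\text{ and all }(i,j)\in\Sc_1\times\Sc_2\text{ not in }\Gc\big\}.
\]
Fix a star with hub $h\in\Sc_1$ and leaves $L\subset\Sc_2$ (the case $h\in\Sc_2$ being symmetric by the symmetrization of Section \ref{SEC:12}). On $E$, the only weights $V(n,i,\ell)$ that exceed $1$ are the $V(n,h,\ell)$ with $\ell\in L$, so whenever state $h$ is observed and Sender draws $\ell\in L$, Receiver picks $h$ with probability $V(n,h,\ell)/T_n^\ell\to 1$ and the edge $(h,\ell)$ is reinforced; if $\ell\notin L$ then on $E$ no reinforcement can occur. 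Thus the vector $(V(n,h,\ell))_{\ell\in L}$ evolves, conditionally on state-$h$ arrivals, as an asymptotically exact P\'olya urn on $|L|$ colors, and different stars involve disjoint vertex sets so their urns are decoupled. Classical P\'olya urn theory yields, a.s.\ on $E$, that each $V(n,h,\ell)\to\infty$ and $V(n,h,\ell)/T_n^h$ converges to a strictly positive limit. Together with $V(\infty,i,j)=1$ for $(i,j)\notin\Gc$, this gives (a) and (b) on $E$.

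\textbf{Positive probability of $E$.} Write $E=\bigcap_n A_n$, where $A_n$ is the event of no off-$\Gc$ reinforcement at step $n$. On $A_1\cap\cdots\cap A_{n-1}$ we have $V(n,i,j)=1$ for $(i,j)\notin\Gc$, so
\[
\Pb(A_n^c\mid\F_n)\;=\;\frac{1}{M_1}\sum_{(i,j)\notin\Gc}\frac{1}{T_n^i\,T_n^j}.
\]
The P\'olya urn analysis of the previous paragraph implies that, on this event, each $T_n^i\ge c_i n$ for some random $c_i>0$ and all large $n$; hence the right-hand side is $O(1/n^2)$ with high conditional probability, so a conditional Borel--Cantelli argument gives $\Pb(E)>0$.

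\textbf{Main obstacle.} The delicate point is the circularity between ``$E$ holds'' and ``linear growth of $T_n^i$'' invoked just above. I would break it by fixing small constants $c_i>0$, introducing the stopping time $\tau:=\inf\{n:T_n^i<c_i n\text{ for some }i\in\Sc\}$, establishing the $O(1/n^2)$ estimate unconditionally on $\{n\le\tau\}$, and then using P\'olya-urn concentration within each star to show $\Pb(E\cap\{\tau=\infty\})>0$ for $c_i$ sufficiently small; the claims (a) and (b) then follow on this event from the P\'olya urn analysis.
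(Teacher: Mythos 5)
Your proposal is correct and follows essentially the same route as the paper: the paper likewise freezes the off-$\Gc$ edges (its Lemma \ref{claim4} is your conditional Borel--Cantelli estimate), controls the within-star ratios and the linear growth of the hub counts (Lemmas \ref{claim5}--\ref{claim6}), and resolves the circularity you identify with exactly the stopping-time bootstrap $\tau_n=\tau_n^1\wedge\tau_n^2\wedge\tau_n^3$ that you sketch. The one caution is that your appeal to ``classical P\'olya urn theory'' is where the real work sits: on $E$ the reinforcement weight of colour $\ell$ in the star of hub $h$ is proportional to $V_\ell^2/(V_\ell+M_1-1)$ rather than to $V_\ell$, so this is a perturbed, nonlinear urn, and the paper instead proves the needed quantitative concentration directly, via a Doob decomposition of $\log\big(\hat V_i^k/\hat V_{\pi(i)}^k\big)$ together with an Azuma-type maximal inequality (Lemma \ref{martest}) --- precisely the ``P\'olya-urn concentration'' your obstacle paragraph correctly says is required to close the bootstrap.
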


\subsection{Contents}

The rest of this chapter is devoted to the proof of the main results, as follows. In Section \ref{SEC:4:1} we discuss the stochastic approximation of
$(x_n)_{n\in\mathbb{N}}$ by an ordinary differential equation. In Section \ref{SEC:Lya} we justify some  notation from Section \ref{sec:model},  and show Theorem \ref{theorem3} and its deterministic counterpart that $H$ is a Lyapounov function of the associated ODE. In Section \ref{SEC:4:5} we deduce that $(x_n)_{n\in\mathbb{N}}$ almost surely converges to the set of equilibria of this differential equation, and describe the stable equilibria $x$ in terms of graph structure of $\Gc_x$. In Section \ref{SEC:4:11} we connect our analysis of stability of the reinforcement learning model with the one from the static equilibrium analysis literature. Finally in Section \label{sec:>0}, we show   Theorem \ref{theorem2} about convergence with positive probability towards subgraphs $\Gc$ satisfying $(P)_{\Gc}$.

%Note that in this preliminary version Theorem \ref{theorem1} is not proved yet: in particular we do not show convergence to a single equilibrium of the ODE. The difficulty stems from the analysis in neighbourhoods of the boundary $\partial\De$, corresponding to states or signals  falling out of use.

\subsection{Notation}
For all $u$, $v$ $\in\R$, we write $u=\Box(v)$ if $|u|\le v$; we let $u\wedge v=\min(u,v)$ (resp. $u\vee v=\max(u,v)$) be the minimum (resp. maximum) of $u$ and $v$.

We let  $\Cst(a_1,a_2,\ldots, a_p)$ denote a positive constant depending
only on  $a_1$, $a_2$, $\ldots$ $a_p$, and let $\Cst$  denote a  universal
positive constant.
\section{Stochastic approximation}
\label{SEC:4:1} \subsection{Mean-field ODE}\label{SEC:4:2}
Let $|\cdot|$ be the Euclidean norm on $\mathbb{R}^{M_1\times
M_2}$. Let us calculate the increment of $(x_n)_{n\in\mathbb{N}}$ at
time $n$:
\begin{align}
\label{s-est1}
x_{n+1}-x_n&=\left(\frac{V_{n+1}}{1+T_n}-\frac{V_n}{1+T_n}+\frac{V_n}
{1+T_n}-\frac{V_n}{T_n}\right)\1_{\{V_{n+1}-V_n\ne0\}}
\nonumber\\
&=\frac{\1_{\{V_{n+1}-V_n\ne0\}}}{1+T_n}(V_{n+1}-V_n-x_n).\end{align}
In expectation,
\begin{align}
\label{expectation}
\mathbb{E}[\,x_{n+1}-x_n\,|\,\mathcal{F}_n\,]=\frac{1}{(1+T_n)M_1}F(x_n),\end{align}
where $F$, defined from
$\Delta$ to $T\De$ tangent space of $\De$, maps $x$ to
\begin{displaymath}
F(x):=\left(x_{ij}\Big(\frac{x_{ij}}{x_ix_j}-H(x)\Big)\right)_{i,j \in
\mathcal{S}},
\end{displaymath}
with the convention that $F(x)_{ij}=0$ if $x_{ij}=0$.

Let us consider the following ordinary differential equation, defined on $\De\setminus\partial\De$:
\begin{align}
\label{ODE} \frac{d\,x}{dt}\,=\,F(x).
\end{align}

\subsection{Results}
\label{SEC:result}
\begin{lemma}\label{lemma:StochApp}
 There exists an adapted martingale increment process $(\eta_n)_{n\in \mathbb{N}}$ such that, for all $n\in\N$,
\begin{align}\label{Stochapprox}x_{n+1}-x_n=\frac{1}{(1+T_n)M_1}F(x_n)+\eta_{n+1},\end{align}
and $|\eta_{n+1}|\le2/(1+T_n)$.
\end{lemma}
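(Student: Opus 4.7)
The plan is to apply a Doob decomposition to the increment $x_{n+1}-x_n$. I define
$$\eta_{n+1}:=(x_{n+1}-x_n)-\Es[x_{n+1}-x_n\mid\F_n],$$
which is $\F_{n+1}$-measurable and has zero $\F_n$-conditional mean by construction, so $(\eta_n)_{n\in\N}$ is the required martingale increment process. By identity (\ref{expectation}), the conditional expectation equals $F(x_n)/((1+T_n)M_1)$, so the decomposition (\ref{Stochapprox}) is immediate from this definition.

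The only substantive task is the pointwise bound $|\eta_{n+1}|\le 2/(1+T_n)$. I would use formula (\ref{s-est1}) to observe that the random vector $x_{n+1}-x_n$ takes only finitely many values: it equals $0$ when no success occurs at step $n$, and $(e_{i^\ast j^\ast}-x_n)/(1+T_n)$ when a success occurs at the strategy pair $(i^\ast,j^\ast)$, where $e_{i^\ast j^\ast}$ denotes the corresponding standard basis vector of $\R^{M_1\times M_2}$. Writing $p_{i^\ast j^\ast}$ for the conditional probability of success at $(i^\ast,j^\ast)$ given $\F_n$, and $p:=\sum_{i^\ast,j^\ast}p_{i^\ast j^\ast}$ for their sum, the conditional expectation simplifies to $(p^\ast-p\,x_n)/(1+T_n)$ with $p^\ast=(p_{i^\ast j^\ast})$. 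Each of the vectors $e_{i^\ast j^\ast}$, $x_n$, and $p^\ast/p$ is a probability vector on $\Sc_1\times\Sc_2$ (nonnegative, summing to one), and hence has Euclidean norm at most one. The bound $|\eta_{n+1}|\le 2/(1+T_n)$ then follows from a short case analysis (success versus no success) combined with the triangle inequality: on the no-success event $\eta_{n+1}=-(p^\ast-p\,x_n)/(1+T_n)$, bounded by $2p/(1+T_n)$, while on a success at $(i^\ast,j^\ast)$ one rewrites $\eta_{n+1}$ as $(e_{i^\ast j^\ast}-p^\ast-(1-p)x_n)/(1+T_n)$ and gets $(1+p+(1-p))/(1+T_n)=2/(1+T_n)$.

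I foresee no serious obstacle: this is a routine Doob-type decomposition from the stochastic approximation toolbox, and the only care lies in bookkeeping the numerical constant in the $L^\infty$ bound on $\eta_{n+1}$, which is settled by the elementary observation that the three relevant probability vectors all have Euclidean norm at most one.
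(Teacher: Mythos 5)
Your proposal is correct and follows essentially the same route as the paper: define $\eta_{n+1}$ as the Doob (martingale) part of the increment, identify the compensator via \eqref{expectation}, and bound $|\eta_{n+1}|$ by controlling the increment and its conditional expectation separately. Your case analysis on success versus no success is in fact slightly more careful than the paper's one-line bound $|x_{n+1}-x_n|\le 1/(1+T_n)$ (which, taken literally in the Euclidean norm, misses a factor of $\sqrt{2}$), but both arguments are the same in substance and yield the stated estimate.
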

\bp
The inequality $|\eta_{n+1}|\le2/(1+T_n)$ comes from
\begin{align}
\label{s-est2}
|x_{n+1}-x_n|=\left|\frac{\1_{\{V_{n+1}-V_n\ne0\}}}{1+T_n}
(V_{n+1}-V_n-x_n)\right|\le\frac{1}{1+T_n}.
\end{align}
\ep

The following Lemma \ref{basic:est}, which proves asymptotic linear growth of $T_n$, will in particular imply that the martingale
$(\sum_{k=1}^n\eta_{k})_{n\in\mathbb{N}}$ converges a.s., by Doob convergence theorem.
\begin{lemma}
\label{basic:est} With Probability 1,$$ \frac{1}{M_1} \le \liminf_{n
\to \infty}\frac{T_n}{n}\le \limsup_{n \to \infty}\frac{T_n}{n}\le
\frac{\min(M_1,M_2)}{M_1}. $$
\end{lemma}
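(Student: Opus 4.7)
The plan is to use the fact, recorded in \eqref{CommPotential}, that $T_{n+1}-T_n$ is conditionally on $\F_n$ a Bernoulli random variable with parameter $H(x_n)/M_1$, and then combine the uniform bounds $1 \le H(x) \le \min(M_1,M_2)$ from the previous lemma with a martingale law of large numbers.

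More concretely, first I would write the Doob decomposition
\[
T_n \;=\; T_0 \;+\; \frac{1}{M_1}\sum_{k=0}^{n-1} H(x_k) \;+\; M_n,
\]
where $M_n := \sum_{k=0}^{n-1}\bigl((T_{k+1}-T_k) - \Es[T_{k+1}-T_k\,|\,\F_k]\bigr)$ is a martingale with respect to $(\F_n)_{n\in\N}$. Since $T_{k+1}-T_k \in \{0,1\}$, the increments of $M_n$ are bounded by $1$ in absolute value, so by the standard strong law for square-integrable martingales with bounded increments (e.g.\ via $\sum_k \Var(M_{k+1}-M_k)/k^2 < \infty$ and Kronecker's lemma, or equivalently the Azuma--Hoeffding inequality combined with Borel--Cantelli), we conclude that $M_n/n \to 0$ almost surely.

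Next I would use the previous lemma: for every $k$, $1 \le H(x_k) \le \min(M_1,M_2)$. Therefore the Cesàro average satisfies
\[
\frac{1}{M_1} \;\le\; \frac{1}{nM_1}\sum_{k=0}^{n-1}H(x_k) \;\le\; \frac{\min(M_1,M_2)}{M_1}
\]
for every $n$. Combining this pointwise sandwich with $M_n/n \to 0$ and $T_0/n \to 0$ yields the claimed bounds on $\liminf T_n/n$ and $\limsup T_n/n$.

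There is really no single obstacle here; the only point deserving a brief verification is the martingale SLLN applied to $(M_n)$, which is immediate given the uniform bound on the increments. Note that we do \emph{not} need convergence of $H(x_n)$ itself (which would follow from Theorem \ref{theorem3}, but relies on material that is used later), since only the deterministic bounds on $H$ on all of $\De$ are invoked.
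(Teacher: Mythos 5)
Your proof is correct and is essentially the paper's argument: the paper simply invokes \eqref{CommPotential} together with the conditional Borel--Cantelli lemma, whose standard proof is exactly your Doob decomposition plus the martingale strong law with bounded increments, followed by the same deterministic sandwich $1\le H(x_k)\le\min(M_1,M_2)$. You have merely unpacked the cited black-box lemma; nothing is missing.
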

\begin{proof} The result is a direct consequence of  (\ref{CommPotential}) and Conditional Borel-Cantelli Lemma, see \cite[Theorem I.6]{Dur04}
\end{proof}
\noindent\textbf{Remark.}
\textnormal{We show later that, as $n$ goes to infinity, $H(x_n)$ converges (Theorem \ref{theorem3}), and the proof above implies that its limit is also the one of $T_n/n$.}

Formula ($\ref{Stochapprox}$) can be interpreted as a stochastically perturbed Cauchy-Euler
approximation scheme for the ODE
($\ref{ODE}$), with step size $1/M_1(1+T_n)$. The  step size being  $O(1/n)$,
$(x_n)_{n\in\mathbb{N}}$ asymptotically shadows  solutions of  the asymptotic ODE, so that its limit set belongs to a class of possible limit sets of pseudotrajectories of the ODE (see for instance \cite{Ben99}).

Let $\G$ be the \emph{set of
equilibria}
 of the ODE, i.e.
$$\Gamma:=\Big\{x\in\Delta\,:\,
F(x)=0\Big\}.$$

\section{Lyapunov function}
\label{SEC:Lya}
\subsection{Deterministic case: Mean-field ODE}
\label{sec:ode}
Let us start with a heuristical justification of the fact that $H$ is a Lyapounov function, i.e. that it increases along the trajectories of the ODE.

The differential equation \eqref{ODE} can be understood in the language of (non-linear) replicator dynamics, with the following biological perspective. Suppose a population consists of species $(i,j)\in\Sc_1\times\Sc_2$,
each corresponding to a strategy pair. Let the \emph{fitness} of $(i,j)$ be its efficiency $y_{ij}=x_{ij}/(x_ix_j)$. The wording is justified by the following interpretation: the probability that population $(i,j)$ increases is
$$\frac{1}{M_1}\cdot\frac{x_{ij}}{x_i}\cdot\frac{x_{ij}}{x_j}
=\frac{1}{M_1}\times\textnormal{ proportion of } (i,j)\, \times
\textnormal{ its efficiency}.$$ Then the average fitness of the
whole population is the communication potential $H(x)$.

Therefore the mean-field ODE can be understood as follows,
\begin{align*} \qquad &\textrm{\emph{Growth rate of} }x_{ij}\,\\ &\qquad=\, x_{ij}
\,\times\,(\textrm{
 \emph{fitness of}  }(i,j)-\textrm{ \emph{average fitness of whole population}}).
\end{align*}
In particular, those species (i.e. strategy pairs) whose
fitness (i.e. efficiency) are above the average increase their
proportions in the population (i.e. distribution of strategy pairs).
Note that the fitness of our species changes over time.

This interpretation makes it reasonable to conjecture that the average fitness of the whole population indeed
increases along solutions of ODE $(\ref{ODE})$, as we show in Proposition \ref{Prop:Lya} and, in the (discrete) stochastic case, in Theorem \ref{theorem3}, proved in Section \ref{sec:stoc}. The proof of the latter is technically quite long, owing to the non-continuity of $H$ on the boundary $\partial\De$, which prevents us from converting the deterministic Proposition \ref{Prop:Lya} into a stochastic one via a simple Taylor formula.

Let, for all $x\in\De$,
$$p(x):=\frac{1}{2}\sum_{i,j,k\in\mathcal{S}: x_{ij},x_{ik}\ne0}
\frac{x_{ij}x_{ik}}{x_i}\left(y_{ij}-y_{ik}\right)^2.$$
\begin{proposition}
\label{Prop:Lya} $H$ is a Lyapunov function on
$\Delta\setminus\partial\Delta$ for the mean-field ODE
\textnormal{(\ref{ODE})}; more precisely,
\begin{equation}
\label{eq:varh1}
\nabla H \cdot F \,(x)\, =\, p(x)\ge0.
\end{equation}
\end{proposition}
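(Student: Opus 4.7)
The plan is to prove the identity $\nabla H\cdot F(x)=p(x)$ by direct algebraic manipulation: compute the gradient of $H$, dot it with $F$, expand, and recognize the result as the same quadratic expression as $p(x)$ in terms of the efficiencies $y_{ij}$ and the vertex-efficiencies $N_i(x)$. Nonnegativity then comes for free, since $p(x)$ is visibly a sum of squares.

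First I would compute $\partial H/\partial x_{ab}$ for $a\in\Sc_1$, $b\in\Sc_2$. On $\De\setminus\partial\De$ all marginals $x_i$ are strictly positive, so $H$ is smooth in the $(x_{ij})$ coordinates when each summand $x_{ij}^2/(x_ix_j)$ is taken to vanish at $x_{ij}=0$. Applying the chain rule to $x_i=\sum_k x_{ik}$ produces two sums that collapse, via the definition of $N_i(x)$, and one obtains the clean identity $\partial H/\partial x_{ab}=2y_{ab}-N_a(x)-N_b(x)$. Plugging this together with $F(x)_{ab}=x_{ab}(y_{ab}-H(x))$ into $\nabla H\cdot F$, I would expand the six resulting terms and repeatedly apply the master identities
\[
\sum_{b\in\Sc_2} x_{ab}y_{ab}=x_aN_a(x),\qquad \sum_{a\in\Sc_1} x_aN_a(x)=H(x),
\]
together with their transposes in $a\leftrightarrow b$. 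The $H(x)^2$ contributions cancel, leaving
\[
\nabla H\cdot F(x)=2\sum_{a\in\Sc_1,\,b\in\Sc_2} x_{ab}y_{ab}^2-\sum_{a\in\Sc_1}x_aN_a(x)^2-\sum_{b\in\Sc_2}x_bN_b(x)^2.
\]

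Finally I would show that $p(x)$ admits the same expression. For each fixed $i\in\Sc$, the inner sum $\tfrac12\sum_{j,k}(x_{ij}x_{ik}/x_i)(y_{ij}-y_{ik})^2$ is, after pulling out a factor of $x_i$, the variance of $y_{ij}$ under the probability measure $(x_{ij}/x_i)_j$; it therefore equals $\sum_j x_{ij}y_{ij}^2-x_iN_i(x)^2$. Summing over $i\in\Sc_1\cup\Sc_2$ and using the symmetry $x_{ij}=x_{ji}$, which doubles $\sum_{a,b}x_{ab}y_{ab}^2$, reproduces the right-hand side above. The hard part is really only the bookkeeping — tracking the factor of two produced by symmetrization and handling zero terms consistently — rather than anything conceptual; the whole calculation is driven by the observation that $N_i(x)$ plays two roles at once, both as the sum condensing the derivative of $H$ and as the conditional mean appearing in the variance identity defining $p$.
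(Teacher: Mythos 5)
Your identity $\partial H/\partial x_{ab}=2y_{ab}-N_a(x)-N_b(x)$ is correct, the $H(x)^2$ cancellation works out ($-2H^2+H^2+H^2=0$), and the reduction of both $\nabla H\cdot F$ and $p$ to the common expression $2\sum_{a\in\Sc_1,b\in\Sc_2}x_{ab}y_{ab}^2-\sum_{i\in\Sc}x_iN_i(x)^2$ is valid, so the proof goes through. The route is the same in spirit as the paper's --- a direct computation of the gradient dotted with $F$ --- but the decomposition differs: the paper works in the symmetrized coordinates, kills the $H(x)$ terms via $\sum_k x_{ik}/x_i=1$, and then completes the square by symmetrizing the indices $j\leftrightarrow k$, landing directly on the sum-of-squares form of $p(x)$; you instead funnel everything through the vertex efficiencies $N_i$ and the variance identity for the measure $(x_{ij}/x_i)_j$. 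What your organization buys is that the intermediate formula you derive is precisely the content of the paper's Lemma \ref{further} (expand \eqref{Jz1} or \eqref{Jz2} and you get your second-moment expression), so your single calculation establishes the Proposition and that Lemma simultaneously; what it costs is that the nonnegativity of $\nabla H\cdot F$ is only visible after you separately verify that your expression coincides with $p(x)$, whereas the paper's symmetrization exhibits the square immediately. One small point worth making explicit in a write-up: the gradient is taken without the affine constraint $\sum x_{ij}=1$, which is harmless because $F(x)$ lies in the tangent space of $\De$, exactly as the paper notes.
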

\noindent\textbf{Remark.} \textnormal{ We will see later that $H$ is not a strict
Lyapunov function; in other words, $\nabla H\cdot F$ does not only
vanish on the set of rest points of $F$.}

\begin{proof}
We take advantage of the symmetrical notation introduced at the end of Section \ref{sec:model}: in a
mathematical perspective, there is no difference between
state and signal in the mean-field ODE.
Let us now differentiate $H$ along a path of the ODE:  note that, when differentiating with respect to space variables, we are obviously not restricted to $\De$, so that $x_{ij}$ and $x_{ji}$ are considered as independent variables (in the calculation below, we use the convention that  $x_i=\sum_{j\in\Sc} x_{ij}$, but any other convention would lead to the same result):
\begin{align}\nonumber
\nabla H \cdot F \,(x)\, =\,&\sum_{i,j\in
\mathcal{S}}\frac{ x_{ij}}{x_ix_j}\Big(\frac{ (x_{ij})^2}{x_ix_j}-
x_{ij}H (x) \Big)\\ \nonumber&\qquad -\frac{
(x_{ij})^2}{2(x_i)^2x_j}\Big(\sum_{k \in \mathcal{S}}\frac{
(x_{ik})^2}{x_ix_k}- x_{ik}H (x) \Big)\\ \nonumber&\qquad
-\frac{ (x_{ij})^2}{2x_i(x_j)^2}\Big(\sum_{k \in
\mathcal{S}}\frac{(x_{jk})^2}{x_jx_k}-x_{jk}H (x) \Big)\\
\label{lya1} \,=\,&\sum_{i,j \in
\mathcal{S}}\frac{(x_{ij})^3}{(x_i)^{2}(x_j)^2}-\sum_{i,j,k \in
\mathcal{S}}\frac{ (x_{ij})^2 (x_{ik})^2}{(x_i)^3x_jx_k}\\
\label{lya2}&\qquad -\Big(\sum_{i,j\in
\mathcal{S}}\frac{(x_{ij})^2}{x_ix_j}-\sum_{i,j,k \in
\mathcal{S}}\frac{  (x_{ij})^2x_{ik}}{(x_i)^2x_j} \Big)H (x)
\end{align} Using that $\sum_{k\in
\mathcal{S}}x_{ik}/x_i=1$,
$$
(\ref{lya2})\,=\,\sum_{i,j\in \mathcal{S}}\frac{
(x_{ij})^2}{x_ix_j}\Big(1-\sum_{k\in \mathcal{S}}\frac{
x_{ik}}{x_i}\Big)H (x)\,=\,0.$$ Using the symmetry between $j$ and $k$, we obtain
\begin{align}
\nonumber (\ref{lya1})
&\,=\,\sum_{i,j,k\in\mathcal{S}}\frac{(x_{ij})^3x_{ik}}{(x_i)^3(x_j)^2}-\sum_{i,j,k\in\mathcal{S}}
\frac{(x_{ij})^2(x_{ik})^2}{(x_i)^3x_jx_k}\\ \label{lya3} &
\,=\,\frac{1}{2}\sum_{i,j,k\in\mathcal{S}}
\frac{x_{ij}x_{ik}}{x_i}\left(\frac{x_{ij}}{x_ix_j}-\frac{x_{ik}}{x_ix_k}\right)^2.
\end{align}
\end{proof}

\begin{lemma}
  \label{further} For any $x\in\Delta\setminus\partial\Delta$, \begin{align}\label{Jz1}
  \nabla H\cdot F\,(x)\,=\sum_{i,j\in\mathcal{S}}
  x_{ij}(y_{ij}-N_i(x))^2,
\end{align}
which can also be written as
\begin{align} \label{Jz2}
  \nabla H\cdot F\,(x)\,=\,\sum_{i,j\in\Sc}x_{ij}(y_{ij}-H (x))^2
-\sum_{i\in\Sc}x_i(N_i(x)-H (x))^2.\end{align}
\end{lemma}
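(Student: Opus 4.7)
The plan is to derive both identities directly from Proposition \ref{Prop:Lya}, which already gives
$$\nabla H\cdot F\,(x)\,=\,\frac{1}{2}\sum_{i,j,k\in\mathcal{S}}\frac{x_{ij}x_{ik}}{x_i}(y_{ij}-y_{ik})^2.$$
The essential observation is that for each fixed $i$ with $x_i>0$, the weights $\mu_i(k):=x_{ik}/x_i$ define a probability measure on $\mathcal{S}$ (since $\sum_k x_{ik}=x_i$), and under this measure $N_i(x)=\sum_k\mu_i(k)\,y_{ik}$ is precisely the mean of the random variable $k\mapsto y_{ik}$. So both identities are instances of the standard variance and bias-variance decompositions.

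For \eqref{Jz1}, I would use the well-known identity
$$\frac{1}{2}\sum_{j,k}\mu_i(j)\mu_i(k)(y_{ij}-y_{ik})^2\,=\,\mathrm{Var}_{\mu_i}(y_{i\cdot})\,=\,\sum_k\mu_i(k)(y_{ik}-N_i(x))^2,$$
multiply by $x_i$ (so the prefactor becomes $x_i\mu_i(j)\mu_i(k)=x_{ij}x_{ik}/x_i$ on the left, and $x_i\mu_i(k)=x_{ik}$ on the right), and sum over $i$. After relabeling $k\leftrightarrow j$ in the resulting right-hand side this gives exactly $\sum_{i,j}x_{ij}(y_{ij}-N_i(x))^2$, matching formula \eqref{lya3}.

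For \eqref{Jz2}, I would apply the bias-variance decomposition with the constant shift $H(x)$: for each $i$,
$$\sum_k\mu_i(k)(y_{ik}-H(x))^2\,=\,\sum_k\mu_i(k)(y_{ik}-N_i(x))^2\,+\,(N_i(x)-H(x))^2,$$
which holds because $N_i(x)$ is the $\mu_i$-mean of $y_{i\cdot}$. Rearranging and multiplying by $x_i$ gives
$$\sum_k x_{ik}(y_{ik}-N_i(x))^2\,=\,\sum_k x_{ik}(y_{ik}-H(x))^2\,-\,x_i(N_i(x)-H(x))^2,$$
and summing over $i$ transforms \eqref{Jz1} into \eqref{Jz2}.

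I do not expect any real obstacle here; it is purely a bookkeeping exercise using the symmetrized convention of Section \ref{SEC:12}, under which $x_{ij}=0$ whenever $i,j$ are both states or both signals, so that all sums $\sum_{i,j\in\mathcal{S}}$ automatically restrict to the bipartite indices on which $y_{ij}$ is meaningful. The only point requiring mild care is to check that the convention $F(x)_{ij}=0$ when $x_{ij}=0$ is consistent with the appearance of $y_{ij}$ inside the squared brackets (since terms with $x_{ij}=0$ drop out of both \eqref{Jz1} and \eqref{Jz2}); this is immediate from the factor $x_{ij}$ in front.
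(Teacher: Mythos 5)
Your proposal is correct and follows essentially the same route as the paper: both proofs start from $\nabla H\cdot F(x)=p(x)$ and exploit that $N_i(x)$ is the mean of $k\mapsto y_{ik}$ under the probability measure $x_{ik}/x_i$ on $\mathcal{S}$, deriving \eqref{Jz1} via the variance decomposition of the pairwise-difference sum and \eqref{Jz2} via the bias--variance identity with the shift $H(x)$. Your write-up of the second step is in fact slightly more explicit than the paper's, which passes from \eqref{Jz1} to \eqref{Jz2} without detail.
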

\noindent\textbf{Remark.}\textnormal{
In the context of communication
systems, the above three formulas \eqref{eq:varh1}, \eqref{Jz1}--\eqref{Jz2} mean that  the growth rate
of the communication potential is a function depending on the difference
between efficiencies of different strategy pairs.}
\begin{proof}
Fix $i$, $j$ $\in\Sc$, and define $Y: \Sc\rightarrow \mathbb{R}$
by $Y(k):=y_{ik}$, seen as a random variable on
$(\Sc,\mathbb{P}_{z,i})$ (with expectation $\mathbb{E}_{z,i}(.)$),
where $\mathbb{P}_{z,i}(\{k\}):=x_{ik}/x_i$. Then
\begin{align*}
\mathbb{E}_{z,i}[(y_{ij}-Y)^2]&=\sum_{k\in\Sc}\frac{x_{ik}}{x_i}
\left(y_{ij}-y_{ik}\right)^2=(\mathbb{E}_{z,i}(y_{ij}-Y))^2+\mathbb{E}
_{z,i}[(Y-E_{z,i}(Y))^2]\\
&=(y_{ij}-N_i(x))^2+\sum_{k\in\Sc}\frac{x_{ik}}{x_i}(y_{ik}-N_i(x))^2,
\end{align*}
using that
$$\mathbb{E}_{z,i}(Y)=\sum_{k\in\Sc}\frac{x_{ik}}{x_i}y_{ik}=N_i(x).$$
Therefore
\begin{align*}
\nabla H \cdot
F \,(x)&=\frac{1}{2}\left(\sum_{i,j\in\Sc}x_{ij}(y_{ij}-N_i(x))^2
+\sum_{i,j,k\in\Sc}\frac{x_{ij}x_{ik}}{x_i}(y_{ik}-N_i(x))^2\right)\\
&=\sum_{i,j\in\Sc}x_{ij}(y_{ij}-N_i(x))^2,
\end{align*}
which implies that
$$\nabla H \cdot F \,(x):=\sum_{i,j\in\Sc}x_{ij}(y_{ij}-H (x))^2
-\sum_{i\in\Sc}x_i(N_i(x)-H (x))^2,$$ and completes the
proof.
\end{proof}

\begin{lemma}\label{SG:Lem:1}
  For all $x\in\Delta\setminus\partial\Delta$ and $i\in\mathcal{S}$, $N_i(x)\ge 1$.
\end{lemma}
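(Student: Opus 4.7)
The plan is to prove $N_i(x) \geq 1$ by a direct Cauchy--Schwarz argument.

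First I would unwind the definition. For $i \in \mathcal{S}_1$, the symmetrization sets $x_{ik} = 0$ whenever $k \in \mathcal{S}_1$, so
\begin{align*}
N_i(x) \,=\, \sum_{k \in \mathcal{S}} \frac{x_{ik}}{x_i}\, y_{ik} \,=\, \sum_{k \in \mathcal{S}_2} \frac{x_{ik}^2}{x_i^2\, x_k},
\end{align*}
and the analogous formula with $\mathcal{S}_1$ in place of $\mathcal{S}_2$ holds when $i \in \mathcal{S}_2$.

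Next, I would apply Cauchy--Schwarz in the form
\begin{align*}
\left(\sum_{k \in \mathcal{S}_2} \frac{x_{ik}^2}{x_i^2\, x_k}\right) \left(\sum_{k \in \mathcal{S}_2} x_k\right) \,\geq\, \left(\sum_{k \in \mathcal{S}_2} \frac{x_{ik}}{x_i}\right)^{\!2}.
\end{align*}
The right-hand factor simplifies using $\sum_{k \in \mathcal{S}_2} x_{ik} = x_i$ (definition of $x_i$ for $i \in \mathcal{S}_1$), giving $1$. For the second factor on the left, the constraint defining $\Delta$ gives $\sum_{k \in \mathcal{S}_2} x_k = \sum_{k \in \mathcal{S}_2, \ell \in \mathcal{S}_1} x_{\ell k} = 1$. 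Combining these yields $N_i(x) \geq 1$. The case $i \in \mathcal{S}_2$ is symmetric.

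There is no real obstacle here; the only care needed is the bookkeeping of the symmetrized variables, making sure that the sums in the Cauchy--Schwarz inequality range over the opposite partition (so that both the numerator identity $\sum x_{ik}/x_i = 1$ and the partition-sum identity $\sum_{k} x_k = 1$ hold). Equality characterizes the configurations where $x_{ik}/(x_i x_k)$ is constant in $k$ on the support, i.e.\ where all efficiencies $y_{ik}$ adjacent to $i$ coincide --- a fact worth recording, though it is not required for the statement.
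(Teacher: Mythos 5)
Your proof is correct and is essentially the paper's own argument: both rest on the identity $\sum_k x_{ik}/x_i = 1$ followed by the same Cauchy--Schwarz inequality against the weights $x_k$, using $\sum_k x_k = 1$ over the relevant partition. Your version is slightly more careful in restricting the sums to the opposite partition (the paper nominally sums over all of $\mathcal{S}$, which only works because the cross terms vanish), but the substance is identical.
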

\begin{proof}
Indeed,
$$\sum_{j\in\Sc}y_{ij}x_j=\sum_{j\in\Sc}\frac{x_{ij}}{x_i}=1$$
which subsequently implies, by Cauchy-Schwartz inequality, that
\begin{align}\label{est:N}
1=\left(\sum_{j\in\Sc}y_{ij}x_j\right)^2\le\left(\sum_{j\in\Sc}y_{ij}^2x_j\right)
\left(\sum_{j\in\Sc}x_j\right)=N_i(x).\end{align}
\end{proof}

Let us
define
$$\Delta_{\epsilon}:=\{x\in\Delta\setminus\partial\Delta:\,
p(x)>\epsilon\}\,,$$ and let
\begin{align*}
\Lambda:=\{x\in\Delta:\,p(x)=0\},
\end{align*}
where $p$ is defined in the statement of Proposition \ref{Prop:Lya}. The following Lemma \ref{restpoint} is straightforward.
\begin{lemma}\label{restpoint}
$x \in \Lambda$ if and only if $$
\frac{x_{ij}}{x_j}=\frac{x_{ik}}{x_k}, \textrm{ for all $i,j,k$ s.t.
} x_{ij}\ne 0, x_{ik} \ne 0 $$ or, equivalently,
$$y_{ij}=y_{ik}, \textrm{ for all $i,j,k$ s.t. } x_{ij}\ne 0, x_{ik}
\ne 0.
$$
\end{lemma}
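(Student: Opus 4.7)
The plan is to read both directions directly off the definition of $p(x)$. Recall
$$p(x)=\frac{1}{2}\sum_{i,j,k\in\Sc:\,x_{ij},x_{ik}\ne 0}\frac{x_{ij}x_{ik}}{x_i}\bigl(y_{ij}-y_{ik}\bigr)^2,$$
which is a sum of nonnegative terms. The key observation is that on the restricted index set the coefficient $\frac{x_{ij}x_{ik}}{x_i}$ is strictly positive: indeed, $x_{ij}>0$ automatically forces $x_i=\sum_{l\in\Sc}x_{il}\ge x_{ij}>0$, and similarly $x_{ik}>0$ appears in the numerator.

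Hence I would argue that $p(x)=0$ if and only if every summand vanishes, which holds if and only if $(y_{ij}-y_{ik})^2=0$ for all $(i,j,k)$ with $x_{ij},x_{ik}\ne 0$, i.e.\ $y_{ij}=y_{ik}$ on this index set. For the equivalent reformulation, since $x_i>0$ on the relevant indices, the identity $y_{ij}=y_{ik}$ amounts to $\frac{x_{ij}}{x_ix_j}=\frac{x_{ik}}{x_ix_k}$, and multiplying through by $x_i$ gives $\frac{x_{ij}}{x_j}=\frac{x_{ik}}{x_k}$; conversely dividing by $x_i$ recovers the efficiency form.

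There is no substantive obstacle here: the lemma is an unpacking of the definition of $\Lambda$, which is why the authors label it straightforward. The only microscopic care required is the observation that the coefficients in the sum defining $p(x)$ really are strictly positive on the index set of summation, so that no cancellation between distinct terms can turn a nonzero squared difference into a vanishing total.
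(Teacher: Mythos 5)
Your argument is correct and is precisely the "straightforward" unpacking the paper has in mind (the paper omits the proof entirely, merely declaring the lemma straightforward): $p(x)$ is a sum of nonnegative terms whose coefficients are strictly positive on the index set, so it vanishes iff each squared difference does. Your added care in noting that $x_{ij}>0$ forces $x_i>0$ (so the coefficients are positive and the two formulations differ only by the nonzero factor $x_i$) is exactly the right microscopic check.
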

\noindent\textbf{Remark.}
  \textnormal{
 Lemma \ref{restpoint} can be phrased as follows: if $x\in\La$ then, in the graph $\Gc_x$, edges within the same connected component have the same weight. Note that $x\in\G$ is equivalent to all edges of  $\Gc_x$ having the same weight $H(x)$. So the two sets $\G$ and $\La$ are different, i.e. $H$ is not a strict Lyapounov function, which justifies the need to prove separately the convergence to the set of equilibria in Section \ref{sec:eq}. }

\subsection{Proof of Theorem \ref{theorem3} and convergence to $\La$}
\label{sec:stoc}
\subsubsection{Proof of Theorem \ref{theorem3} }
For simplicity, we let $V:=V_n$ in the following calculation. Let us compute the expected increment of $(H(x_n))_{n \in
\mathbb{N}}$.
\begin{align}
&\quad\mathbb{E}[\,H(x_{n+1})-H(x_n)\,|\,\mathcal{F}_n\,]\\
&=\sum_{i\in \mathcal{S}_1, j\in
\mathcal{S}_2}\frac{V_{ij}^2}{V_iV_j}
\left(\frac{(V_{ij}+1)^2}{(V_i+1)(V_j+1)}-\frac{V_{ij}^2}{V_iV_j}\right.
\nonumber\\& \left.\qquad \qquad+\sum_{k\in\mathcal{S}, k\ne
j}\Big(\frac{V_{ik}^2}{(V_i+1)V_k}-\frac{V_{ik}^2}{V_iV_k}\Big)
+\sum_{k\in\mathcal{S}, k\ne
i}\Big(\frac{V_{kj}^2}{(V_j+1)V_k}-\frac{V_{kj}^2}{V_jV_k}\Big)\right)\nonumber\\
&=\frac{1}{2}\sum_{(i,j)\in \mathcal{S}^2}\frac{V_{ij}^2}{V_iV_j}
\left(\frac{(V_{ij}+1)^2}{(V_i+1)(V_j+1)}-\frac{V_{ij}^2}{V_iV_j}
\right. \nonumber\\& \left. \qquad \qquad+\sum_{k\in\mathcal{S},
k\ne j}\Big(\frac{V_{ik}^2}{(V_i+1)V_k}-\frac{V_{ik}^2}{V_iV_k}\Big)
+\sum_{k\in\mathcal{S}, k\ne
i}\Big(\frac{V_{jk}^2}{(V_j+1)V_k}-\frac{V_{jk}^2}{V_jV_k}\Big)\right)\nonumber\\
&=\frac{1}{2}\sum_{(i,j)\in
\mathcal{S}^2}\frac{V_{ij}^2}{V_iV_j}\left(-\sum_{k\in\mathcal{S}}\frac{V_{ik}^2}
{V_i(V_i+1)V_k}-\sum_{k\in\mathcal{S}}\frac{V_{jk}^2}{V_j(V_j+1)V_k}
\right. \nonumber\\& \left.\qquad
\qquad+\frac{(V_{ij}+1)^2}{(V_i+1)(V_j+1)}+\frac{V_{ij}^2}{V_iV_j}-
\frac{V_{ij}^2}{(V_i+1)V_j}-\frac{V_{ij}^2}{(V_j+1)V_i} \right)\nonumber\\
\label{stochlya1}
&=-\sum_{(i,j,k)\in\mathcal{S}^3}\frac{V_{ij}^2V_{ik}^2}{V_i^2(V_i+1)V_jV_k}+
\frac{1}{2}\sum_{(i,j)\in
\mathcal{S}^2}\frac{V_{ij}^2}{V_iV_j}\frac{2V_iV_jV_{ij}+V_iV_j+V_{ij}^2}{V_iV_j(V_i+1)(V_j+1)}
\nonumber\\
&=\sum_{(i,j)\in \mathcal{S}^2}\frac{V_{ij}^3}{V_i(V_i+1)V_j^2}
-\sum_{(i,j,k)\in\mathcal{S}^3}\frac{V_{ij}^2V_{ik}^2}{V_i^2(V_i+1)V_jV_k}\\
\label{stochlya2} &\qquad+ \frac{1}{2}\sum_{(i,j)\in
\mathcal{S}^2}\frac{V_{ij}^2}{V_iV_j}\frac{2V_iV_jV_{ij}+V_iV_j+V_{ij}^2}{V_iV_j(V_i+1)(V_j+1)}
-\sum_{(i,j)\in \mathcal{S}^2}\frac{V_{ij}^3}{V_i(V_i+1)V_j^2}
\end{align}

Now let us prove $(\ref{stochlya1})$ is nonnegative. Indeed,
\begin{align*}
(\ref{stochlya1})&=\sum_{(i,j,k)\in\mathcal{S}^3}\frac{V_{ij}V_{ik}}{V_i+1}\left(\frac{V_{ij}^2}{V_i^2V_j^2}-\frac{V_{ij}V_{ik}}{V_i^2V_jV_k}
\right)\\
&=\frac{1}{2}\sum_{(i,j,k)\in\mathcal{S}^3}\frac{V_{ij}V_{ik}}{V_i+1}\left(\frac{V_{ij}}{V_iV_j}-\frac{V_{ik}}{V_iV_k}\right)^2\ge0.
\end{align*}

Next we show that $(\ref{stochlya2})$ is nonnegative as well:
\begin{align*}
(\ref{stochlya2})&=-2\sum_{(i,j)\in
\mathcal{S}^2}\frac{V_{ij}^3}{V_i(V_i+1)V_j^2(V_j+1)}+\sum_{(i,j)\in
\mathcal{S}^2}\frac{V_{ij}^2}{V_iV_j(V_i+1)(V_j+1)}\\& \qquad
\qquad+\sum_{(i,j)\in
\mathcal{S}^2}\frac{V_{ij}^4}{V_i^2V_j^2(V_i+1)(V_j+1)}\\
&=\sum_{(i,j)\in
\mathcal{S}^2}\frac{V_{ij}^2}{V_iV_j(V_i+1)(V_j+1)}\left(-\frac{2V_{ij}}{V_j}+1+\frac{V_{ij}^2}{V_iV_j}\right)\\
&=\sum_{i \in \mathcal{S}_1, j\in
\mathcal{S}_2}\frac{2V_{ij}^2}{V_iV_j(V_i+1)(V_j+1)}\left(1-\frac{V_{ij}}{V_i}
-\frac{V_{ij}}{V_j}+\frac{V_{ij}^2}{V_iV_j}\right)\\
&=\sum_{i \in \mathcal{S}_1, j\in
\mathcal{S}_2}\frac{2V_{ij}^2}{V_iV_j(V_i+1)(V_j+1)}\Big(1-\frac{V_{ij}}{V_i}\Big)\Big(1-\frac{V_{ij}}{V_j}\Big)\ge0.
\end{align*}
\subsubsection{Convergence to $\La$}
\label{SEC:4:3}
Let us now prove the following Proposition \ref{StochLya}.
\begin{proposition}
\label{StochLya} $(x_n)_{n\in\mathbb{N}}$ converges to
$\Lambda$ a.s. ; more precisely, $(p(x_n))_{n\in\mathbb{N}}$ converges to $0$ a.s.
\end{proposition}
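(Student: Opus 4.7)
The idea is to convert the submartingale estimate of Theorem \ref{theorem3} into an explicit summability bound on $p(x_n)$ and then close the argument with a stochastic-approximation style regularity estimate. Essentially the plan is to mimic, at the discrete stochastic level, the identity $\nabla H\cdot F=p\geq 0$ of Proposition \ref{Prop:Lya}.

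\textbf{Step 1: discrete Lyapunov estimate.} Revisiting the computation in the proof of Theorem \ref{theorem3}, the nonnegative term (\ref{stochlya1}) rewrites, after the manipulation already carried out there, as
\begin{equation*}
\frac{1}{2}\sum_{(i,j,k)\in\Sc^3}\frac{V_{ij}V_{ik}}{V_i+1}\left(\frac{V_{ij}}{V_iV_j}-\frac{V_{ik}}{V_iV_k}\right)^2.
\end{equation*}
Substituting $V_{ij}=x_{ij}T_n$, noting that $V_{ij}/(V_iV_j)=y_{ij}/T_n$, and using $V_i\geq M_2\geq 1$ so that $V_i+1\leq 2V_i$, this term is bounded below by $p(x_n)/(2T_n)$. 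Hence
\begin{equation*}
\mathbb{E}\bigl[H(x_{n+1})-H(x_n)\,\big|\,\F_n\bigr]\;\geq\;\frac{p(x_n)}{2\,T_n}.
\end{equation*}

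\textbf{Step 2: summability.} By Theorem \ref{theorem3}, $(H(x_n))$ is a bounded submartingale, so its Doob compensator $\sum_{k<n}\mathbb{E}[H(x_{k+1})-H(x_k)\mid\F_k]$ converges a.s.~to a finite limit. Combining with Step 1 and the lower bound $T_n\geq n/M_1$ from Lemma \ref{basic:est}, one obtains
\begin{equation*}
\sum_{n\geq 0}\frac{p(x_n)}{n}\;<\;\infty\quad\text{a.s.}
\end{equation*}

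\textbf{Step 3: passage to $p(x_n)\to 0$.} Summability alone is not enough; one also needs regularity of $p$ along the trajectory. Lemma \ref{lemma:StochApp} gives $|x_{n+1}-x_n|=O(1/T_n)=O(1/n)$, and on any compact $K\subset\Delta\setminus\partial\Delta$ the function $p$ is Lipschitz, hence $|p(x_{n+1})-p(x_n)|=O(1/n)$ whenever $x_n$ stays in $K$. The elementary deterministic fact ``$\sum a_n/n<\infty$ combined with $|a_{n+1}-a_n|=O(1/n)$ forces $a_n\to 0$'' then yields $p(x_n)\to 0$ on such trajectories.

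\textbf{Main obstacle.} The real difficulty is the behaviour near $\partial\Delta$: $p$ contains factors $1/x_i$ and so is unbounded as some $x_i^n\to 0$, which makes the Lipschitz estimate in Step 3 degenerate; a priori $p(x_n)$ could produce spikes that the summability bound does not rule out. The cleanest way to proceed is to localise on events $\{\liminf_n \min_i x_i^n\geq \varepsilon\}$, where Steps 1--3 give the conclusion directly, and to treat the complementary event by exploiting the monotonicity of each $V_{ij}$ together with $H(x_n)\leq\min(M_1,M_2)$ to show that when $\min_i x_i^n\to 0$ the weights $y_{ij}$ within each connected component of $\Gc_{x_n}$ must asymptotically equalise, so $p(x_n)\to 0$ on that event as well.
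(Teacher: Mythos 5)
Your Steps 1 and 2 coincide with what the paper does: it extracts from the computation of Theorem \ref{theorem3} the same term $P_n\ge p(x_n)/(2T_n)$ and uses the a.s.\ convergence of the compensator $A_n$ of the bounded submartingale $(H(x_n))_{n\in\N}$ to make $\sum_n p(x_n)/(2T_n)$ a.s.\ finite. (A minor slip: to pass from this to $\sum_n p(x_n)/n<\infty$ you need the upper bound $T_n\le T_0+n$, not the lower bound of Lemma \ref{basic:est}.) The genuine gap is in Step 3 and the paragraph following it. On the event where the trajectory eventually stays in a compact subset of $\Delta\setminus\partial\Delta$, your Lipschitz-plus-summability argument works; but on the complementary event you offer no proof. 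The assertion that ``when $\min_i x_i^n\to 0$ the weights $y_{ij}$ within each connected component of $\Gc_{x_n}$ must asymptotically equalise'' is, by Lemma \ref{restpoint}, precisely the statement $p(x_n)\to 0$ that you are trying to prove, and neither the monotonicity of the $V_{ij}$ nor the bound $H\le\min(M_1,M_2)$ delivers it. This complementary event cannot be brushed aside: Theorem \ref{theorem2} shows that $x_n$ converges to points of $\partial\De$ with positive probability, so approach to the boundary is the typical, not the exceptional, behaviour.

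The paper closes the argument without any regularity of $p$, by an excursion argument run directly on the superlevel sets $\Delta_\epsilon=\{x:p(x)>\epsilon\}$. The two ingredients are (i) the per-step displacement bound $|x_{r+1}-x_r|\le 1/(1+T_r)$ from \eqref{s-est2}, and (ii) the per-step compensator gain $A_{r+1}-A_r\ge p(x_r)/(2T_r)\ge\epsilon/(4(1+T_r))$, valid as long as $x_r\in\Delta_{\epsilon/2}$. If the trajectory never left $\Delta_{\epsilon/2}$ it would force $A_\infty=\infty$ since $\sum_r 1/(1+T_r)=\infty$; and each excursion from $\Delta_\epsilon$ out of $\Delta_{\epsilon/2}$ requires total displacement at least $\delta=\textnormal{dist}(\Delta_\epsilon,\Delta_{\epsilon/2}^c)$, hence costs at least $\epsilon\delta/4$ of compensator, so infinitely many visits to $\Delta_\epsilon$ are likewise impossible. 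The point is that both the displacement and the drift are measured by the same quantity $\sum_r 1/(1+T_r)$, so no Lipschitz control of $p$ near $\partial\Delta$ is ever needed. As written, your proposal does not prove the proposition; to repair it you would need to replace the final paragraph by an argument of this excursion type (or an equally quantitative substitute) that covers trajectories approaching the boundary.
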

\begin{proof} Let us define process $Y_n:=H(x_n), \, n\in\mathbb{N}$. We
decompose $(Y_n)_{n \in \mathbb{N}}$ into a martingale
$(M_n)_{n\in\mathbb{N}}$ and a predictable process
$(A_n)_{n\in\mathbb{N}}$ where
$A_{n+1}-A_n=\mathbb{E}[\,Y_{n+1}-Y_n\,|\,\mathcal{F}_n\,].$ Since
$H$ is bounded, martingale $(M_n)_{n \in \mathbb{N}}$ is upper
bounded and hence converges.

Let
\begin{align*}
P_n&:=
\frac{1}{2}\sum_{(i,j,k)\in\mathcal{S}^3}\frac{V_{ij}^nV_{ik}^n}{V_i^n+1}\left(\frac{V_{ij}^n}{V_i^nV_j^n}-\frac{V_{ik}^n}{V_i^nV_k^n}\right)^2
;\\
Q_n&:=
\sum_{i \in \mathcal{S}_1, j\in
\mathcal{S}_2}\frac{(V_{ij}^n)^2}{V_i^nV_j^n(V_i^n+1)(V_j^n+1)}\Big(1-\frac{V_{ij}^n}{V_i^n}\Big)\Big(1-\frac{V_{ij}^n}{V_j^n}\Big).
\end{align*}
Hence
\begin{align*}
P_n\ge
\frac{1}{4}\sum_{(i,j,k)\in\mathcal{S}^3}\frac{V_{ij}^nV_{ik}^n}{V_i^n}
\left(\frac{V_{ij}^n}{V_i^nV_j^n}-\frac{V_{ik}^n}{V_i^nV_k^n}\right)^2
\,=\,\frac{p(x_n)}{2\,T_n}.
\end{align*}

The rest of the argument is similar to the proof of convergence to the set of equilibria in [1]. If $(x_n)_{n\in\mathbb{N}}$ were infinitely often away from
$\Lambda$, then the drift would cause $(H(x_n))_{n\in\mathbb{N}}$ to
go to infinity, hence contradicting the boundedness of $H$. Indeed,
let $\delta$ be the distance between $\Delta_{\epsilon}$ and the
complement of $\Delta_{\epsilon/2}$. Suppose
$x_n\in\Delta_\epsilon$, $x_{n+1},\ldots,x_{n+k-1}\in
\Delta_{\frac{\epsilon}{2}}\setminus\Delta_{\epsilon}$ and
$x_{n+k}\in\Delta_{\frac{\epsilon}{2}}^c$,
\begin{align*}
A_{n+k}-A_n\,&\ge\,\sum_{r=n}^{n+k-1}(P_r+Q_r)
\ge\,\sum_{r=n}^{n+k-1}P_r\,=\,\sum_{r=n}^{n+k-1}\frac{p(x_r)}{2T_r}
\ge\,\frac{\epsilon}{4}\sum_{r=n}^{n+k-1}\frac{1}{T_r+1}.
\end{align*}
Therefore,
\begin{align*}
\delta\,&\le\,\sum_{r=1}^{n+k-1}|x_{r+1}-x_r|
\le \, \sum_{r=1}^{n+k-1}\frac{1}{1+T_r}
\le \, \frac{4}{\epsilon}(A_{n+k}-A_n).
\end{align*}
Therefore $x_n\in \Delta_\epsilon$ infinitely often would cause $A_n$
to increase to infinity. By contradiction, $(x_n)_{n\in \mathbb{N}}$
must converge to $\Lambda$. \end{proof}
\noindent\textbf{Remark.}\textnormal{
The proof of convergence of $p(x_n)$ to $0$ would also hold on (deterministic) solutions of the ODE.
}

\begin{corollary} \label{SG:COR:1} Almost surely,
  $$\frac{T_n}{n} \rightarrow \lim_{n\to\iy}\frac{H(x_n)}{M_1} \,\in\,\left[\frac{1}{M_1},\frac{\min(M_1,M_2)}{M_1}\right]\quad
  \textnormal{\textnormal{as}} \,\, n\to \infty.$$
\end{corollary}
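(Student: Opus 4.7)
The plan is to upgrade the two-sided bound of Lemma \ref{basic:est} to an exact almost sure limit, using the now-available a.s.\ convergence of $(H(x_n))_{n\in\N}$ (Theorem \ref{theorem3}). The remark following Lemma \ref{basic:est} already flags precisely this strategy, so the task is mostly to fill in the martingale argument.

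First, I would set $\xi_{k+1}:=T_{k+1}-T_k$, which by \eqref{CommPotential} satisfies $\Pb(\xi_{k+1}=1\,|\,\F_k)=H(x_k)/M_1$ and $\xi_{k+1}\in\{0,1\}$. Writing
\[
T_n-T_0\,=\,M_n+A_n,\quad\tx{where}\quad M_n:=\sum_{k=0}^{n-1}\left(\xi_{k+1}-\tfrac{H(x_k)}{M_1}\right),\quad A_n:=\sum_{k=0}^{n-1}\tfrac{H(x_k)}{M_1},
\]
produces a Doob decomposition of $T_n-T_0$ into an $\F_n$-martingale $M_n$ with increments bounded by $1$ in absolute value, and a predictable part $A_n$.

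Next, I would show $M_n/n\to 0$ a.s. Since the conditional variances satisfy $\Var(\xi_{k+1}\,|\,\F_k)\le 1/4$, Kolmogorov's convergence criterion gives that $\sum_{k\ge 1}(\xi_{k+1}-\Es[\xi_{k+1}\,|\,\F_k])/k$ converges a.s.; Kronecker's lemma then yields $M_n/n\to 0$ a.s. (Alternatively one could invoke Azuma--Hoeffding directly to obtain the same conclusion.) For the predictable part, Theorem \ref{theorem3} supplies an a.s.\ limit $H_\iy:=\lim_n H(x_n)\in[1,\min(M_1,M_2)]$ by Lemma \ref{restpoint}'s bounds on $H$; Ces\`aro's theorem then gives
\[
\frac{A_n}{n}\,=\,\frac{1}{n}\sum_{k=0}^{n-1}\frac{H(x_k)}{M_1}\,\longrightarrow\,\frac{H_\iy}{M_1}\quad\tx{a.s.}
\]
Adding the two contributions yields $T_n/n\to H_\iy/M_1$ a.s., and Lemma \ref{restpoint}'s range for $H$ places this limit in the claimed interval $[1/M_1,\min(M_1,M_2)/M_1]$.

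There is no serious obstacle: the argument is essentially a standard martingale law of large numbers combined with Ces\`aro averaging, the non-trivial inputs (boundedness of $H$, a.s.\ convergence of $H(x_n)$, and the conditional Bernoulli structure of the increments of $T_n$) having all been established upstream. The only small point to be careful about is to phrase the Kronecker/Kolmogorov step so that it applies with the filtration $(\F_n)_{n\in\N}$ rather than to an i.i.d.\ sequence, which is routine since the increments of $M_n$ are uniformly bounded.
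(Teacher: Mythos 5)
Your argument is correct and is essentially the paper's: the paper's one-line proof invokes the conditional Borel--Cantelli lemma (L\'evy's extension), which is exactly the Doob decomposition plus Kolmogorov/Kronecker martingale law of large numbers you spell out, followed by the same Ces\`aro step once Theorem \ref{theorem3} gives convergence of $H(x_n)$. One small slip: the bounds $H\in[1,\min(M_1,M_2)]$ come from the lemma on the extrema of $H$ in Section \ref{sec:bgcp}, not from Lemma \ref{restpoint}.
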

\bp
Same as Lemma \ref{basic:est}.
\ep

\section{Equilibria}
\label{SEC:4:5}
\subsection{Convergence to the set of equilibria : Proof of Theorem \ref{theorem5}}
\label{sec:eq}
%Indeed, let us denote by $z(C)$ the set on which $H(x)=C$ where $C$
%is a positive constant. We note that only on $Q\cap z(C)$ we have
%$\frac{x_{ij}}{x_ix_j}=H(x)=C$, for all $i,j$ s.t. $x_{ij}>0$. On
%the other hand, for all $x \in Q'\cap z(C)$,  by weighted graph
%associated with $G^3$, we know that on every subgraph of $G^3$,
%$\frac{x_{ij}}{x_ix_j}$ has a constant value if $x_{ij}>0$. But it
%might differ between different subgraphs.
We already know that the occupation measure
$(x_n)_{n\in\mathbb{N}}$ a.s. converges to $\Lambda$; the goal of this section is to
prove that, more precisely, $(x_n)_{n\in\mathbb{N}}$ a.s.  converges
to the set of equilibria $\Gamma$ of the ODE.
\begin{lemma}
\label{est1} Suppose $\epsilon$ small enough and
$x\in\Delta_{\epsilon^4}^c$. Then for any $i\in\mathcal{S}_1,\,
j\in\mathcal{S}_2$ s.t. $x_{ij}>\epsilon$,
\begin{align*}
\left|\,y_{ij}-N_i(x)\,\right|<\frac{\epsilon}{3}\,, \qquad \textrm{
and }\qquad \left|\,y_{ij}-N_j(x)\,\right|<\frac{\epsilon}{3}\,.
\end{align*}
\end{lemma}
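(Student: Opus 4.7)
The plan is to exploit the alternative expression for $p(x)$ furnished by Lemma \ref{further}, which writes the Lyapunov drift as a single sum of nonnegative terms indexed by pairs $(i,j)$. Once one has this form, the estimate becomes essentially a pointwise bound: the global smallness of $p(x)$ forces each summand to be small, and the condition $x_{ij}>\epsilon$ lets us strip the weight off and recover control on the increment $|y_{ij}-N_i(x)|$ itself.

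Concretely, I would first recall the identity
\begin{equation*}
p(x)\,=\,\nabla H\cdot F(x)\,=\,\sum_{i,j\in\Sc}x_{ij}\bigl(y_{ij}-N_i(x)\bigr)^2
\end{equation*}
from Lemma \ref{further}, valid on $\Delta\setminus\partial\Delta$. The hypothesis $x\in\Delta_{\epsilon^4}^c$ means exactly $p(x)\le\epsilon^4$. Since every term in the sum is nonnegative, it follows in particular that for any fixed $i\in\Sc_1$, $j\in\Sc_2$ with $x_{ij}>\epsilon$,
\begin{equation*}
x_{ij}\bigl(y_{ij}-N_i(x)\bigr)^2\,\le\,p(x)\,\le\,\epsilon^4,
\end{equation*}
and dividing by $x_{ij}>\epsilon$ yields $|y_{ij}-N_i(x)|\le\epsilon^{3/2}$. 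For $\epsilon$ small enough (any $\epsilon<1/9$ works) one has $\epsilon^{3/2}<\epsilon/3$, which gives the first inequality of the lemma.

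For the second inequality, I would invoke the symmetry built into Section \ref{SEC:12}: in the symmetrized notation, the summation in $p(x)$ runs over all ordered pairs in $\Sc\times\Sc$, and the pair $(j,i)$ contributes $x_{ji}\bigl(y_{ji}-N_j(x)\bigr)^2$. Since $x_{ji}=x_{ij}>\epsilon$ and $y_{ji}=y_{ij}$, exactly the same argument with $i$ and $j$ interchanged delivers $|y_{ij}-N_j(x)|\le\epsilon^{3/2}<\epsilon/3$.

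I do not anticipate any real obstacle here: everything is algebraic once Lemma \ref{further} is in hand, and the only ``smallness'' assumption is the quantitative statement $\epsilon<1/9$ that converts $\epsilon^{3/2}$ into $\epsilon/3$. The slightly unusual exponent in the definition of $\Delta_{\epsilon^4}^c$ is precisely what makes this work: one power of $\epsilon$ is consumed by the weight $x_{ij}$, another by the square root, and the remaining $\epsilon^{1/2}$ provides the margin needed to undercut the target $\epsilon/3$.
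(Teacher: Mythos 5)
Your proof is correct and is exactly the argument the paper intends: the paper's own proof is the one-line remark ``Follows directly from (\ref{Jz1})'', and your write-up simply fills in the details — bounding each nonnegative summand of $p(x)=\sum_{i,j}x_{ij}(y_{ij}-N_i(x))^2$ by $\epsilon^4$, dividing by $x_{ij}>\epsilon$, and using the symmetrized indexing to get the $N_j$ bound. The quantitative condition $\epsilon<1/9$ you extract is the correct reading of ``$\epsilon$ small enough''.
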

\begin{proof}
Follows directly from (\ref{Jz1}).
\end{proof}

\begin{lemma}
\label{stochapp2}
\begin{align}
\label{sterm1} y_{ij}^{n+1}-y_{ij}^{n}
=\frac{1}{M_1T_n}\,y_{ij}^n\,\Big(y_{ij}^n-N_i(x_n)-N_j(x_n)+H(x_n)\Big)
+r_{n+1}+\zeta_{ij}^{n+1},
\end{align}
where $(r_n)_{n\ge1}$ is predictable, $\mathbb{E}\,[\,\zeta_{ij}^{n+1}\,|\,\mathcal{F}_n\,]=0$ and
\begin{align}\label{est5}
|\zeta_{ij}^{n+1}|<\frac{12}{T_n
  x_i^nx_j^n},
\quad |r_{n+1}|<\frac{6(x_i^n+x_j^n)}{(T_nx_i^nx_j^n)^2}\le\frac{12}{(T_nx_i^nx_j^n)^2}.\end{align}
\end{lemma}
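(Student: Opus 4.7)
Since $y_{ij} = f(x_{ij}, x_i, x_j)$ with $f(a,b,c) := a/(bc)$ smooth wherever $bc > 0$, the proof amounts to a second-order Taylor expansion of $y_{ij}$ coupled to the stochastic approximation for $(x_n)$ already provided by Lemma \ref{lemma:StochApp}. Writing $\Delta a, \Delta b, \Delta c$ for the increments of $x_{ij}^n, x_i^n, x_j^n$, the first-order expansion reads
\begin{align*}
y_{ij}^{n+1} - y_{ij}^n = \frac{\Delta a}{x_i^n x_j^n} - \frac{y_{ij}^n \Delta b}{x_i^n} - \frac{y_{ij}^n \Delta c}{x_j^n} + R_{n+1},
\end{align*}
where $R_{n+1}$ collects the second-order terms involving $\partial^2_{bb}f=2y/b^2$, $\partial^2_{cc}f=2y/c^2$, $\partial^2_{ab}f=-1/(b^2c)$, $\partial^2_{ac}f=-1/(bc^2)$, $\partial^2_{bc}f=y/(bc)$. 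I would then organize the remainder of the proof into three steps: (i) substitute the drift + martingale decomposition of each increment; (ii) check that the drift simplifies to the stated main term; (iii) bound $\zeta_{ij}^{n+1}$ and $r_{n+1}$.

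For (i)--(ii): Lemma \ref{lemma:StochApp} gives $\Delta a = F(x_n)_{ij}/(M_1(1+T_n)) + \eta_{ij}^{n+1}$, and summing over the second index yields analogous decompositions for $\Delta b, \Delta c$ with drifts $x_i^n(N_i-H)/(M_1(1+T_n))$ and $x_j^n(N_j-H)/(M_1(1+T_n))$ respectively (using the identity $\sum_k x_{ik}y_{ik}/x_i = N_i(x)$), and with martingale increments $\eta_i^{n+1} := \sum_k \eta_{ik}^{n+1}$, $\eta_j^{n+1} := \sum_k \eta_{jk}^{n+1}$. Using $F(x)_{ij}/(x_i x_j) = y_{ij}(y_{ij}-H)$, the drift contribution of the first-order expansion collapses to
\begin{align*}
\frac{y_{ij}^n}{M_1(1+T_n)}\bigl[(y_{ij}^n - H) - (N_i - H) - (N_j - H)\bigr] = \frac{y_{ij}^n(y_{ij}^n - N_i - N_j + H)}{M_1(1+T_n)},
\end{align*}
which matches the announced main term up to replacement of $1/(1+T_n)$ by $1/T_n$; the resulting $O(1/T_n^2)$ residual is absorbed into $r_{n+1}$, along with $\mathbb{E}[R_{n+1}\mid\mathcal{F}_n]$.

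For (iii): the martingale part is $\zeta_{ij}^{n+1} = \eta_{ij}^{n+1}/(x_i^n x_j^n) - y_{ij}^n\eta_i^{n+1}/x_i^n - y_{ij}^n\eta_j^{n+1}/x_j^n$; using the direct bound $|x_i^{n+1}-x_i^n| \le 1/(1+T_n)$ (and the same for $x_j$), each of $|\eta_{ij}^{n+1}|, |\eta_i^{n+1}|, |\eta_j^{n+1}|$ is at most $2/(1+T_n)$, and combined with the key inequality $y_{ij}/x_i = x_{ij}/(x_i^2 x_j) \le 1/(x_i x_j)$ (from $x_{ij} \le x_i$) and its analogue for $y_{ij}/x_j$, the three summands give the claimed bound. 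For $r_{n+1}$, each second partial derivative of $f$, together with $x_{ij}\le x_i \wedge x_j$, contributes a term of order $(x_i + x_j)/(T_n x_i x_j)^2$ after being multiplied by two increments each of modulus $1/(1+T_n)$, and summing finitely many such terms yields the stated bound.

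\textbf{Main obstacle.} There is no conceptual difficulty: the lemma is a standard first-order stochastic approximation for the smooth function $f$ of the empirical distribution. The only delicate point is bookkeeping, namely tracking the constants $12$ and $6(x_i + x_j)$ and correctly allocating the residuals from the $1/T_n$ vs.~$1/(1+T_n)$ mismatch and from the second-order Taylor term between $r_{n+1}$ (predictable) and $\zeta_{ij}^{n+1}$ (martingale). The Taylor expansion is valid wherever $x_i^{n+1}x_j^{n+1}$ remains comparable to $x_i^n x_j^n$, i.e.~as soon as $T_n \gg 1/(x_i^n \wedge x_j^n)$; outside this regime the stated bounds on $r_{n+1}$ and $\zeta_{ij}^{n+1}$ are already vacuous, so the statement is trivially true.
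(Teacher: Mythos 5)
Your route is genuinely different from the paper's. The paper never Taylor-expands: it uses the exact identity $y_{ij}^n=T_nV_{ij}^n/(V_i^nV_j^n)$, writes the increment as $(T_{n+1}V_{ij}^{n+1}V_i^nV_j^n-T_nV_{ij}^nV_i^{n+1}V_j^{n+1})$ times $\frac{1}{(V_i^nV_j^n)^2}\bigl(1+(\tfrac{V_i^nV_j^n}{V_i^{n+1}V_j^{n+1}}-1)\bigr)$, expands the first factor into exactly six products of $\{0,1\}$-valued increments with coefficients, each bounded by $T_nV_i^nV_j^n$, and computes its conditional expectation exactly. This delivers the main term with the prefactor $1/(M_1T_n)$ (not $1/(M_1(1+T_n))$), the pathwise bound $|y_{ij}^{n+1}-y_{ij}^n|\le 6/(T_nx_i^nx_j^n)$ valid with no regime restriction, and hence the constants $6$ and $12$ essentially for free; the correction factor $\tfrac{V_i^nV_j^n}{V_i^{n+1}V_j^{n+1}}-1\le\tfrac{1}{V_i^{n+1}}+\tfrac{1}{V_j^{n+1}}$ produces the $6(x_i^n+x_j^n)/(T_nx_i^nx_j^n)^2$ bound on $r_{n+1}$. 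Your expansion of $f(a,b,c)=a/(bc)$ combined with Lemma \ref{lemma:StochApp} reproduces the same drift (your step (i)--(ii) is correct), and is a perfectly reasonable alternative, but it buys you more bookkeeping, not less.

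Two concrete soft spots. First, the constants: you acknowledge not tracking them, but note also that the centered part of the Taylor remainder $R_{n+1}-\mathbb{E}[R_{n+1}\mid\mathcal{F}_n]$ must be charged to $\zeta_{ij}^{n+1}$, and your stated bound on $\zeta_{ij}^{n+1}$ only accounts for the three first-order martingale terms; with the second-order contribution included it is not clear you land under $12/(T_nx_i^nx_j^n)$ rather than under some larger universal constant. (This matters only for literal fidelity to the statement; downstream uses in Lemma \ref{lemma:supermart} would tolerate any universal constant.) Second, your fallback claim that the statement is ``trivially true'' when $T_n$ is not large compared to $1/(x_i^n\wedge x_j^n)$ is not valid: in that regime $12/(T_nx_i^nx_j^n)$ is large but finite, while $|y_{ij}^{n+1}-y_{ij}^n|$ is not a priori bounded by it without doing the estimate --- the inequality $|\zeta_{ij}^{n+1}|<12/(T_nx_i^nx_j^n)$ is exactly what needs proving there, not something vacuous. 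Fortunately you do not need this escape hatch: since $V_i^n\ge V_i^0\ge 1$ and $T_{n+1}\le T_n+1$, one always has $x_i^{n+1}/x_i^n\in[1/2,2]$, so the intermediate points in the Taylor remainder are uniformly comparable to $x_i^n,x_j^n$ and the expansion never degenerates; you should replace the ``vacuous'' argument by this observation.
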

\begin{proof}

\begin{align}
 y_{ij}^{n+1}-y_{ij}^{n}
&=\frac{T_{n+1}V_{ij}^{n+1}}{V_i^{n+1}V_j^{n+1}}-\frac{T_nV_{ij}^n}{V_i^nV_j^n}
\nonumber\\
\label{sterm2}
&=(T_{n+1}V_{ij}^{n+1}V_i^nV_j^n-T_nV_{ij}^nV_i^{n+1}V_j^{n+1})\\
\label{sterm3} &\quad \times
\frac{1}{(V_i^nV_j^n)^2}\left(1+\Big(\frac{V_i^nV_j^n}{V_i^{n+1}V_j^{n+1}}-1\Big)\right)
\end{align}
and
\begin{align*}
  (\ref{sterm2})\,&=\,T_nV_i^nV_j^n\1_{\Delta
V_{ij}^{n+1}>0}\,+\,V_{ij}^nV_i^nV_j^n\1_{\Delta
T_{n+1}>0}\,+\,V_i^nV_j^n\1_{\Delta V_{ij}^{n+1}>0}\\
&\qquad \,-\,T_nV_{ij}^nV_i^n\mathbf{1}y_{\Delta
V_i^{n+1}>0}\,-\,T_nV_{ij}^nV_j^n\mathbf{1}y_{\Delta
V_j^{n+1}>0}\,-\,T_nV_{ij}^n\1_{\Delta V_{ij}^{n+1}>0}.
\end{align*}
Hence
\begin{align*}
  |(\ref{sterm2})|&\,\le\,6\,T_nV_i^nV_j^n,
  \end{align*}
and
\begin{align*}
  \mathbb{E}\left[\,\frac{(\ref{sterm2})}{(V_i^nV_j^n)^2}\,\Big|\,\mathcal{F}_n\,\right]
  \,=\,\frac{1}{M_1T_n}\,y_{ij}^n\,\big(y_{ij}^n-N_i(x_n)-N_j(x_n)+H(x_n)\big)\,.
\end{align*}
By the following simple estimate $$
  \frac{V_{i}^{n}V_{j}^n}{V_i^{n+1}V_j^{n+1}}-1\,\le\,\frac{1}{V_i^{n+1}}+\frac{1}{V_j^{n+1}},$$
we deduce that
\begin{align*}
  |r_{n+1}|\,=\,\left|\mathbb{E}\left[\frac{(\ref{sterm2})}{(V_i^nV_j^n)^2}
  \left(\frac{V_i^nV_j^n}{V_i^{n+1}V_j^{n+1}}-1\right)\,\Big|\,\mathcal{F}_n\right]\right|\\
\,\le\,
\frac{6\,T_n(V_i^n+V_j^n)}{(V_i^nV_j^n)^2}\,\le\,\frac{6(x_i^n+x_j^n)}{(T_nx_i^nx_j^n)^2}\,.
\end{align*}
and
\begin{align}
\label{est3}
  |y_{ij}^{n+1}-y_{ij}^n|\,=\,(\ref{sterm2})\times (\ref{sterm3})\,\le\, \frac{6}{T_n
  x_i^nx_j^n}.
\end{align}
Therefore
$$|\zeta_{ij}^{n+1}|\,=\,2\,|y_{ij}^{n+1}-y_{ij}^n-\Es[y_{ij}^{n+1}-y_{ij}^n|\F_n]|\le\, \frac{12}{T_n
  x_i^nx_j^n}.$$
\end{proof}

Let
\begin{align*}U_{ij}(\epsilon)&:=\Big\{\,x\in\Delta\,:\,x_{ij}\le\epsilon\textrm{
or } y_{ij}-H(X)\ge -\epsilon\,\Big\}.\\
  \end{align*}

\begin{lemma}
\label{lemma:supermart}
  Assume $\epsilon>0$ is small enough and $m_0\in\mathbb{N}$ is
  large enough. Let
  \begin{align*}
    &R_n:=\sum_{m=m_0}^n\Big(y_{ij}^m-y_{ij}^{m-1}-\frac{\epsilon^2}{6m}\Big)\1_{x_{m-1}\notin
  U_{ij}(\epsilon)\cup\Delta_{\epsilon^{4}},\,T_{m-1}\ge
  \frac{m}{2M_1}},\quad \forall\,n\in\mathbb{N};\\
  &S_n:=\sum_{m=m_0}^n\Big(x_{ij}^m-x_{ij}^{m-1}+\frac{\epsilon}{m}\Big)\1_{x_{m-1}\notin
  U_{ij}(\epsilon)\cup\Delta_{\epsilon^{4}},\,T_{m-1}\ge
  \frac{m}{2M_1}},\quad \forall\,n\in\mathbb{N}.
  \end{align*} Then
  \begin{description}
  \item[(a)] $(R_n)_{n\in\mathbb{N}}$ (resp. $(S_n)_{n\in\mathbb{N}}$) is a submartingale (resp.
  supermartingale).
  \item[(b)] $\limsup_{n\ge m, m\to\iy}(R_n-R_m)^-=\limsup_{n\ge m, m\to\iy}(S_n-S_m)^+=0$.
  \end{description}
\end{lemma}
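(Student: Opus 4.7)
Both parts reduce to one-step conditional computations. For (a), since the indicator $\mathbf{1}_{\{x_{m-1} \notin U_{ij}(\epsilon) \cup \De_{\epsilon^4},\, T_{m-1} \geq m/(2M_1)\}}$ is $\F_{m-1}$-measurable, it suffices to bound the conditional drifts on the event $E_m := \{x_{m-1} \notin U_{ij}(\epsilon) \cup \De_{\epsilon^4},\, T_{m-1} \geq m/(2M_1)\}$, namely to establish
$$\Es[y_{ij}^m - y_{ij}^{m-1} \mid \F_{m-1}] \geq \epsilon^2/(6m), \qquad \Es[x_{ij}^m - x_{ij}^{m-1} \mid \F_{m-1}] \leq -\epsilon/m.$$
For $R_n$, I would invoke Lemma \ref{stochapp2}, which expresses the conditional drift of $y_{ij}$ as $(M_1 T_{m-1})^{-1} y_{ij}^{m-1}\bigl(y_{ij}^{m-1} - N_i(x_{m-1}) - N_j(x_{m-1}) + H(x_{m-1})\bigr) + r_m$. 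On $E_m$, the condition $x_{m-1} \notin U_{ij}(\epsilon)$ supplies $x_{ij}^{m-1} > \epsilon$---hence $x_i^{m-1}, x_j^{m-1} \geq x_{ij}^{m-1} > \epsilon$ by the symmetrization and $y_{ij}^{m-1} \geq x_{ij}^{m-1} > \epsilon$---together with $H(x_{m-1}) - y_{ij}^{m-1} > \epsilon$; the condition $x_{m-1} \notin \De_{\epsilon^4}$ lets me apply Lemma \ref{est1} and conclude $|y_{ij}^{m-1} - N_i(x_{m-1})|,\ |y_{ij}^{m-1} - N_j(x_{m-1})| < \epsilon/3$. Rewriting
$y_{ij} - N_i - N_j + H = (H - y_{ij}) + (y_{ij} - N_i) + (y_{ij} - N_j)$
bounds the bracket from below by $\epsilon - 2\epsilon/3 = \epsilon/3$, so the main term is $\geq \epsilon^2/(3 M_1 T_{m-1})$, while \eqref{est5} dominates $|r_m| \leq 48 M_1^2/(m^2 \epsilon^4)$ on $E_m$. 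Combining with the deterministic upper bound $T_{m-1} \leq T_0 + m$ and choosing $m_0$ large in terms of $\epsilon$ and $M_1$ yields the claimed inequality. The analogous estimate for $S_n$ uses \eqref{expectation}: on $E_m$ the drift of $x_{ij}$ equals $\frac{x_{ij}^{m-1}(y_{ij}^{m-1} - H(x_{m-1}))}{M_1(1+T_{m-1})} < -\epsilon^2/(M_1(1+T_{m-1}))$, and again the deterministic control of $T_{m-1}$ finishes the computation.

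For (b), I would decompose $R_n = M_n^R + A_n^R$ and $S_n = M_n^S + A_n^S$ into martingale and predictable parts. Part (a) ensures $A^R$ is nondecreasing and $A^S$ is nonincreasing, so $(R_n - R_m)^- \leq (M_n^R - M_m^R)^-$ and $(S_n - S_m)^+ \leq (M_n^S - M_m^S)^+$ for $n \geq m$, and it is enough to show that both martingales converge almost surely. For $M^R$, on $E_m$ the bounds $x_i^{m-1}, x_j^{m-1} > \epsilon$ and $T_{m-1} \geq m/(2M_1)$ together with \eqref{est3} give
$|y_{ij}^m - y_{ij}^{m-1}| \leq 6/(T_{m-1} x_i^{m-1} x_j^{m-1}) \leq 12 M_1/(m \epsilon^2)$,
whose square is summable in $m$; for $M^S$ one has $|x_{ij}^m - x_{ij}^{m-1}| \leq 1/(1+T_{m-1}) \leq 2 M_1/m$ from \eqref{s-est2}, again with summable square. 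The $L^2$-martingale convergence theorem then furnishes a.s. convergence of $M^R$ and $M^S$, whence the two suprema in (b) tend to zero.

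\emph{Main obstacle.} The delicate step is the drift lower bound for $y_{ij}$: extracting $y_{ij} - N_i - N_j + H \geq \epsilon/3$ requires the two defining conditions of $E_m$ to act in concert---$x_{m-1} \notin U_{ij}(\epsilon)$ to provide $H - y_{ij} > \epsilon$ and the lower bounds on $x_i, x_j, y_{ij}$, and $x_{m-1} \notin \De_{\epsilon^4}$ to force $y_{ij}$ within $\epsilon/3$ of both $N_i$ and $N_j$ via Lemma \ref{est1}. Making the Lemma \ref{stochapp2} residual $r_m$ uniformly negligible compared to the main drift on $E_m$ is what dictates the threshold $m_0$ in terms of $\epsilon$ and $M_1$, and the numerical constants ($1/6$, $1$, $1/3$) come out of balancing these bounds once $\epsilon$ is small enough.
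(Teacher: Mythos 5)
Your proof follows the paper's argument essentially verbatim: the same combination of Lemma \ref{est1} with the condition $x_{m-1}\notin U_{ij}(\e)$ to force the bracket $y_{ij}-N_i-N_j+H\ge\e/3$ in Lemma \ref{stochapp2}, the same control of $r_m$ via \eqref{est5}, the same one-line drift computation for $x_{ij}$ from \eqref{expectation}, and for (b) the same Doob decomposition with $L^2$-boundedness of the martingale part from the square-summable increment bounds. The only caveat is that your final numerical step (like the paper's own) is loose by an $M_1$-dependent factor --- e.g. $T_{m-1}\le m+T_0$ only yields a drift $\ge\e^2/(3M_1(m+T_0))$ rather than the stated $\e^2/(6m)$ --- but this bookkeeping slack is identical to that in the published proof and is harmless where the lemma is used.
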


\begin{proof}
  First we note that if $x_n\notin U_{ij}(\epsilon)$,
  $$\mathbb{E}[\,x_{ij}^{n+1}-x_{ij}^n\,|\,\mathcal{F}_n\,]=\frac{x_{ij}^n}{1+T_n}(y_{ij}^n-H(x_n))\le -\frac{\epsilon}{T_n}.$$
  This implies that $(S_n)_{n\in\mathbb{N}}$ is a supermartingale. Now we prove that
  $(R_n)_{n\in\mathbb{N}}$ is a submartingale.

Assume $\epsilon>0$ small enough and $x_n\notin U_{ij}(\epsilon)\cup
\Delta_{\epsilon^{4}}$. Then Lemma \ref{est1} implies that
$$y_{ij}^n-N_i(x_n)-N_j(x_n)+H(x_n)\,\ge\,\frac{\epsilon}{3}.$$
Hence,
\begin{align*}
  \mathbb{E}[\,y_{ij}^{n+1}-y_{ij}^n\,|\,\mathcal{F}_n\,]\,&\ge\,
  \frac{1}{M_1T_nx_i^nx_j^n}\left(\frac{x_{ij}^n\epsilon}{3}-\frac{12}{T_nx_i^nx_j^n}\right)\\
  &\ge\,\frac{x_{ij}^n\epsilon}{6M_1T_nx_i^nx_j^n}\,\ge\,\frac{\epsilon^2}{6n},
\end{align*}
if $n\ge 144M_1/\epsilon^4$ (which implies $T_n\ge 72/\epsilon^4$ and
therefore $T_nx_i^nx_j^n\ge 72/(x_{ij}^n\epsilon)$ ).

Let us now prove $(b)$. Let
\begin{align*}
  \Pi_n&:=R_n-\sum_{m=m_0}^n\mathbb{E}[\,R_m-R_{m-1}\,|\,\mathcal{F}_{m-1}\,],\\
  \Xi_n&:=S_n-\sum_{m=m_0}^n\mathbb{E}[\,S_m-S_{m-1}\,|\,\mathcal{F}_{m-1}\,].
\end{align*}

By (\ref{est5}), we note that for all $n\ge m_0$,
\begin{align*}
  \mathbb{E}[\,(\Pi_{n+1}-\Pi_n)^2\,|\,\mathcal{F}_n\,]&\,\le\,
  \mathbb{E}[\,(y_{ij}^{n+1}-y_{ij}^n)^2\,|\,\mathcal{F}_n\,]\\
  &\,\le\,
  \frac{12}{(T_nx_i^nx_j^n)^2}\1_{x_n\notin
  U_{ij}(\epsilon), T_n\ge \frac{n}{2M_1}}\,\le\,
  \frac{48M_1^{\,2}}{\epsilon^4 n^2}.
\end{align*}
Therefore $(\Pi_n)_{n\in\mathbb{N}}$ is bounded in
\emph{$\mathbb{L}^2$} and hence converges. We can obtain similar
bounds for $(\Xi_n)_{n\in\mathbb{N}}$, which converges as
well. This completes the proof.
\end{proof}

\begin{lemma}
\label{2epsilon}
  Let $\epsilon>0$, and assume $n\in\mathbb{N}$ is sufficiently
large (depending on $\epsilon$). If $x_n\in U_{ij}(\epsilon)$,
  $|H(x_{n+1})-H(x_{n})|<\epsilon/2$ and $T_n\ge
  n/(2M_1)$, then $x_{n+1}\in U_{ij}(2\epsilon).$
\end{lemma}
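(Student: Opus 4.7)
The plan is to do a case analysis on which of the two conditions defining membership in $U_{ij}(\epsilon)$ is satisfied at time $n$, and in each case bound the per-step motion of $x_{ij}$ and of $y_{ij}-H$ using already established estimates.

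\textbf{Case 1: $x_{ij}^n\le\epsilon$.} The crude estimate \eqref{s-est2} gives $|x_{ij}^{n+1}-x_{ij}^n|\le 1/(1+T_n)$, and the hypothesis $T_n\ge n/(2M_1)$ yields $1/(1+T_n)\le 2M_1/n$. For $n$ large enough that $2M_1/n<\epsilon$, I get $x_{ij}^{n+1}\le\epsilon+2M_1/n\le 2\epsilon$, so $x_{n+1}\in U_{ij}(2\epsilon)$ via the first defining clause.

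\textbf{Case 2: $y_{ij}^n-H(x_n)\ge -\epsilon$ (and $x_{ij}^n>\epsilon$, else Case 1 applies).} If $x_{ij}^{n+1}\le 2\epsilon$ we are done immediately; so assume $x_{ij}^{n+1}>2\epsilon$. Using $|x_{ij}^{n+1}-x_{ij}^n|\le 2M_1/n$ again, this forces $x_{ij}^n\ge 2\epsilon-2M_1/n>\epsilon$ for $n$ large. In particular $x_i^n\ge x_{ij}^n>\epsilon$ and similarly $x_j^n>\epsilon$. I can then invoke the deterministic bound \eqref{est3} from Lemma \ref{stochapp2}:
\[
|y_{ij}^{n+1}-y_{ij}^n|\,\le\,\frac{6}{T_n x_i^n x_j^n}\,\le\,\frac{12M_1}{n\epsilon^2},
\]
which is smaller than $\epsilon/2$ as soon as $n\ge 24M_1/\epsilon^3$. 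Combining this with the hypothesis $|H(x_{n+1})-H(x_n)|<\epsilon/2$ via the triangle inequality,
\[
y_{ij}^{n+1}-H(x_{n+1})\,\ge\,\bigl(y_{ij}^n-H(x_n)\bigr)-|y_{ij}^{n+1}-y_{ij}^n|-|H(x_{n+1})-H(x_n)|\,\ge\,-\epsilon-\tfrac{\epsilon}{2}-\tfrac{\epsilon}{2}\,=\,-2\epsilon,
\]
so $x_{n+1}\in U_{ij}(2\epsilon)$ via the second defining clause.

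The only delicate point is the legitimacy of applying the bound on $|y_{ij}^{n+1}-y_{ij}^n|$, which requires a uniform lower bound on $x_i^n x_j^n$; I obtain this by bootstrapping from the assumption $x_{ij}^{n+1}>2\epsilon$ and the fact that single-step motion of $x_{ij}$ is $O(1/n)$ under $T_n\ge n/(2M_1)$. Everything else is a triangle-inequality bookkeeping argument, so the threshold ``$n$ sufficiently large'' depends only on $\epsilon$ and $M_1$, as required.
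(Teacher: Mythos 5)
Your proof is correct and follows essentially the same route as the paper's: split on which clause of $U_{ij}(\epsilon)$ holds, use the $O(M_1/n)$ single-step bound on $x_{ij}$ in the first case, and in the second case use $x_i^n, x_j^n \ge x_{ij}^n > \epsilon$ together with the estimate \eqref{est3} and the triangle inequality. The extra bootstrap via $x_{ij}^{n+1}>2\epsilon$ is unnecessary (the lower bound on $x_i^n x_j^n$ already follows from the Case 2 assumption $x_{ij}^n>\epsilon$), but it is harmless.
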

\begin{proof}
  Let $x_n\in U_{ij}(\epsilon)$.
  Then $(\ref{s-est2})$ implies
  $$|x_{ij}^{n+1}-x_{ij}^n|\le\frac{2M_1}{n}.$$

 Assume that $n$ is large enough. If $x_{ij}^n\le\e$, then $x_{ij}^{n+1}\le2\epsilon$. Otherwise $x_{ij}^n>\e$ and $y_{ij}^n-H(x_n)\ge-\e$; assuming $T_n\ge
  n/(2M_1)$ and using (\ref{est3}), we have
  $$|y_{ij}^{n+1}-y_{ij}^n|\le \frac{6}{T_nx_i^nx_j^n}\le
  \frac{12M_1}{\epsilon^2 n } < \frac{\epsilon}{2}$$
and, by assumption $|H(x_{n+1})-H(x_{n})|<\epsilon/2$, so that we conclude that $y_{ij}^{n+1}-H(x_{n+1})\ge-2\e$.
\end{proof}

\begin{lemma}
\label{lemma:setofequ}
  $$\limsup_{n\to \infty} x_{ij}^n(y_{ij}^n-H(x_n))^-=0.$$
\end{lemma}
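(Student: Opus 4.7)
I would argue by contradiction: suppose there exist $\epsilon > 0$ and an event $A$ of positive probability on which $x_n \notin U_{ij}(\epsilon)$ for infinitely many $n$. On $A$, Proposition \ref{StochLya} gives $p(x_n) \to 0$ (so eventually $x_n \notin \Delta_{\epsilon^4}$), Corollary \ref{SG:COR:1} gives $T_n \geq n/(2M_1)$ eventually, and Theorem \ref{theorem3} gives $|H(x_{n+1}) - H(x_n)| < \epsilon/2$ eventually. Hence for $n \geq n_0(\omega)$, the hypotheses of Lemmas \ref{lemma:supermart} and \ref{2epsilon} are in force and the indicator in $R_n,S_n$ reduces to $\mathbf{1}_{x_{m-1} \notin U_{ij}(\epsilon)}$. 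If $x_n \in B_\epsilon := U_{ij}(\epsilon)^c$ for every $n$ large, then $S_n - S_{m_0}$ telescopes to $(x_{ij}^n - x_{ij}^{m_0-1}) + \epsilon \sum_{m=m_0}^n 1/m \to +\infty$, contradicting $(S_n - S_m)^+ \to 0$ from Lemma \ref{lemma:supermart}(b). So the bad indices partition into infinitely many excursions $[a_k+1,b_k]$, with $x_{a_k-1},x_{b_k} \in U_{ij}(\epsilon)$ and $x_{a_k},\dots,x_{b_k-1} \in B_\epsilon$; by Lemma \ref{2epsilon}, $x_{ij}^{a_k} \in (\epsilon, 2\epsilon]$.

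Telescoping $S_n$ and $R_n$ along excursions and applying Lemma \ref{lemma:supermart}(b) gives, for any $\delta > 0$ and $k_0$ sufficiently large,
\begin{align*}
\sum_{k \geq k_0}\bigl(x_{ij}^{b_k} - x_{ij}^{a_k}\bigr) + \epsilon \sum_{k \geq k_0}\Sigma_k &\leq \delta, \\
\sum_{k \geq k_0}\bigl(y_{ij}^{b_k} - y_{ij}^{a_k}\bigr) - \frac{\epsilon^2}{6}\sum_{k \geq k_0}\Sigma_k &\geq -\delta,
\end{align*}
where $\Sigma_k := \sum_{\mu = a_k+1}^{b_k} 1/\mu$. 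Using $x_{ij}^{b_k} - x_{ij}^{a_k} \geq -2\epsilon$, the first line forces $\sum_k \Sigma_k \leq \delta/\epsilon + 2K$, where $K$ is the number of excursions after $k_0$. The contradiction with $K=\infty$ will follow from a pathwise lower bound $\Sigma_k \gtrsim 1/\epsilon$, since this yields $K[\log(1 + c/\epsilon) - 2] \leq \delta/\epsilon$, which bounds $K$ for $\epsilon$ small enough that $\log(1 + c/\epsilon) > 2$. Heuristically this bound comes from the drift $\mathbb{E}[x_{ij}^{m+1} - x_{ij}^m \mid \mathcal{F}_m] \leq -\epsilon^2/(1+T_m)$ on $B_\epsilon$: since $x_{ij}$ has to decrease by $\Theta(\epsilon)$ before exiting (or alternatively $y_{ij}$ must rise by $\Theta(\epsilon)$, which requires even longer via the analogous drift $\geq \epsilon^2/(6m)$ for $R_n$), the excursion satisfies $b_k \geq (1 + c/\epsilon)a_k$, giving $\Sigma_k \geq \log(1 + c/\epsilon)$.

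The main obstacle is making the per-excursion estimate $\Sigma_k \gtrsim 1/\epsilon$ rigorous as an a.s. pathwise bound rather than merely in expectation. This is particularly delicate for ``shallow'' excursions where $x_{ij}^{a_k}$ is only marginally above $\epsilon$ (say $x_{ij}^{a_k} = \epsilon + O(1/a_k)$), since then $x_{ij}$ needs only a tiny drop to end the excursion and the martingale noise (bounded by $1/(1+T_m)$ per step) could dominate the drift. I would attempt to handle this either via a stopping-time argument combined with Azuma-Hoeffding applied at the first exit time of the drift-adjusted process, or by iterating Lemma \ref{2epsilon} along the excursion to propagate the distance from the $U_{ij}(\epsilon)$-boundary forward in time. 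An alternative route worth exploring is to combine the $S_n$ and $R_n$ inequalities above with weights that make the $\Sigma_k$ coefficients reinforce, so that the telescoping $x$- and $y$-sums (which are $O(K)$) cannot match the $\Sigma_k$ growth, yielding the contradiction directly without a per-excursion lower bound.
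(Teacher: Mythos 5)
Your reduction to excursions and your use of Lemma \ref{lemma:supermart}(b) to telescope $S_n$ and $R_n$ along excursions are sound, and your observation that ``eventually always in $B_\epsilon$'' contradicts the supermartingale bound is exactly the paper's argument that $\sigma_{m_0}<\infty$. But the core of your intended contradiction --- the pathwise per-excursion lower bound $\Sigma_k\ge\log(1+c/\epsilon)$ --- is genuinely false, not merely delicate: an excursion that starts with $x_{ij}^{a_k}=\epsilon+O(1/a_k)$ (or with $y_{ij}^{a_k}-H(x_{a_k})=-\epsilon-O(1/a_k)$) can terminate in a single step, giving $\Sigma_k=O(1/a_k)$, and nothing rules out infinitely many such shallow excursions. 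None of your three proposed repairs addresses this, because the obstruction is not noise overwhelming drift but the fact that the exit condition may already be within one step's reach when the excursion begins. There is a second gap: Lemma \ref{2epsilon} only gives $x_{a_k}\in U_{ij}(2\epsilon)$, i.e.\ \emph{either} $x_{ij}^{a_k}\le2\epsilon$ \emph{or} $y_{ij}^{a_k}-H(x_{a_k})\ge-2\epsilon$; in the second case $x_{ij}^{a_k}$ may be of order one, so $x_{ij}^{b_k}-x_{ij}^{a_k}\ge-1$ rather than $\ge-2\epsilon$, your first display only yields $\sum_k\Sigma_k\le\delta/\epsilon+K/\epsilon$, and $K\log(1+c/\epsilon)\le\delta/\epsilon+K/\epsilon$ no longer bounds $K$ even granting the per-excursion estimate.

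The paper's proof sidesteps both problems by not attempting to bound the number or length of excursions --- note that the lemma does not require $x_n\in U_{ij}(\epsilon)$ eventually, which is the stronger statement you are trying to prove. Instead, for $n$ inside an excursion it takes $\rho_n$ to be the last entrance time into $U_{ij}(\epsilon)$, uses Lemma \ref{2epsilon} to place $x_{\rho_n+1}\in U_{ij}(2\epsilon)$, and then applies exactly your telescoped bounds $(R_n-R_{\rho_n+1})^-\le\epsilon/2$ and $(S_n-S_{\rho_n+1})^+\le\epsilon/2$, together with $|H(x_n)-H(x_{\rho_n+1})|\le\epsilon/2$, to conclude that $x_n\in U_{ij}(3\epsilon)$ throughout the excursion: when $x_{ij}^{\rho_n+1}\le2\epsilon$ the $S$-bound keeps $x_{ij}^n\le3\epsilon$, and when $y_{ij}^{\rho_n+1}-H(x_{\rho_n+1})\ge-2\epsilon$ the $R$-bound keeps $y_{ij}^n-H(x_n)\ge-3\epsilon$. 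Since $x_n\in U_{ij}(3\epsilon)$ for all large $n$ already forces $\limsup_n x_{ij}^n(y_{ij}^n-H(x_n))^-\le\Cst\,\epsilon$, letting $\epsilon\to0$ finishes the proof. To salvage your write-up, replace the excursion-counting step by this ``once you touch $U_{ij}(\epsilon)$, you remain in $U_{ij}(3\epsilon)$'' argument, which uses only estimates you already have in hand.
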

\begin{proof}
  We fix $\epsilon>0$ and $m_0\in\mathbb{N}$, and let $\tau_{m_0}$
  be the stopping time
  $$\tau_{m_0}:=\inf\Big\{n\ge m_0\,:\, x_n\notin
  \Delta_{\epsilon^{4}} \textrm{ or } T_n<\frac{n}{2M_1} \textrm{ or
  } |H(x_n)-H(x_{m_0})|>\frac{\epsilon}{4}\Big\}.$$
  We only need to show that almost surely, either $\tau_{m_0}<\infty$ or $x_n\in
  U_{ij}(3\epsilon)$ for all $n$ large enough. This will complete the proof since we know from Theorem \ref{theorem3}, Lemma \ref{basic:est}
  and Proposition \ref{StochLya} that there exists almost surely
  $m_0\in\mathbb{N}$ s.t. $\tau_{m_0}=\infty.$

Let $\sigma_{m_0}$ be the stopping time
$$\sigma_{m_0}:=\inf\{n\ge m_0 \,:\, x_n\in U_{ij}(\epsilon)\}.$$
Lemma $\ref{lemma:supermart}(b)$ implies that there exists a.s. a
(random) $m_0\in\mathbb{N}$ such that, for all $n\ge m \ge m_0$,
\begin{align} \label{RSest} (R_n-R_m)^-\le \frac{\epsilon}{2},\qquad(S_n-S_m)^+\le
\frac{\epsilon}{2}.\end{align} Therefore  $\tau_{m_0}<\infty$
or $\sigma_{m_0}<\infty$, using $\sum_{n\ge m_0}1/n=\infty$ and the
observation that $x_{ij}^n$ is bounded.

For all $n\in[\sigma_{m_0},\tau_{m_0})$, let $\rho_n$
be the largest $k\le n$ such that $x_k\in U_{ij}(\epsilon)$. By
(\ref{RSest}), $$y_{ij}^n-y_{ij}^{\rho_n+1}\ge
-\frac{\epsilon}{2}.$$
Now $x_{\rho_n+1}\in
U_{ij}(2\epsilon)$ (by Lemma \ref{2epsilon}): let us assume for instance that $y_{ij}^{\rho_n+1}-H(x_{\rho_n+1})\ge-2\e$. Together with
$|H(x_n)-H(x_{\rho_n+1})|\le \epsilon/2$ (by $n<\tau_{m_0}$), we
deduce that
\begin{align*}
  y_{ij}^n-H(x_n)\ge y_{ij}^{\rho_n+1}-H(x_{\rho_n+1})-\epsilon\ge -3\epsilon.
\end{align*}
With a similar argument, $x_{ij}^{\rho_n+1}\le 2\epsilon$ implies
$x_{ij}^n\le 3\epsilon$. So overall, $x_n\in U_{ij}(3\epsilon)$ if
$\sigma_{m_0}\le n<\tau_{m_0}$, which enables us to conclude.

\end{proof}

\noindent \textbf{Proof of Theorem \ref{theorem5}.}
\begin{align*}
2H(x_n)\,&=\,\sum_{i,j\in\Sc}y_{ij}^nx_{ij}^n\\
&=\, 2H(x_n)+\sum_{i,j\in\Sc}(y_{ij}^n-H(x_n))^+x_{ij}^n-\sum_{i,j\in\Sc}(y_{ij}^n-H(x_n))^-x_{ij}^n
\end{align*}
Hence, $\lim_{n\to \infty} x_{ij}^n(y_{ij}^n-H(x_n))^{-}=0$
implies$$\lim_{n\to \infty} x_{ij}^n(y_{ij}^n-H(x_n))=0.$$ Lemma
\ref{lemma:setofequ} enables us to conclude.

\subsection{Bipartite graph structure}
\label{SEC:4:6}
Let us recall the bipartite graph defined in Section \ref{SEC:4:1} (see
Definition \ref{graph.1}): any $x\in \Delta$ is associated
with a weighted bipartite graph $\mathcal{G}_x$ with vertices
$\mathcal{S}:=\mathcal{S}_1\cup\mathcal{S}_2$, adjacency $\sim$ and
weights as follows \begin{description}
\item[(1)] $\forall \,i \in \mathcal{S}_1, j \in
\mathcal{S}_2, i\sim j$ if and only if $x_{ij}>0.$ \item[(2)] The
weight of edge $\{i,j\}$ is $x_{ij}/(x_ix_j)$.
\end{description}

Let $\Cc_1$, $\ldots$, $\Cc_d$ be the connected components of
$\Gc_x$. Besides the bipartite graph defined above, let us also discuss two
other possible ways to assign weights:
\begin{description}
\item[Graph $\mathcal{G}_x^1$]:$\quad$ Edge $e_{ij}$ has weight $x_{ij}/x_j$, $i \in \mathcal{S}_1, j\in \mathcal{S}_2$.
\item[Graph $\mathcal{G}_x^2$]:$\quad$ Edge $e_{ij}$ has weight $x_{ij}/x_i$, $i \in \mathcal{S}_1, j\in \mathcal{S}_2$.
\end{description}
By Lemma \ref{restpoint}, we observe some interesting properties of these three
graphs when $x\in \Lambda$, in particular $x\in \Gamma$:
\begin{description}
\item[On $\mathcal{G}_x$]:$\,$ All edges in a component
$\mathcal{C}_k,\, k=1,\ldots,d$ have the same weight $\lambda_k$. Hence, $$H(x)=\sum_{k=1}^d\sum_{i \in
\mathcal{S}_1\cap \mathcal{C}_k}x_i\lambda_k=\sum_{k=1}^d\sum_{j \in
\mathcal{S}_2\cap \mathcal{C}_k}x_j\lambda_k.$$ Furthermore, if $x$
is an equilibrium, all the edges in $\mathcal{G}_x$ have the same
weight, which equals $H(x)$.
\item[On $\mathcal{G}_x^1$]:$\,$ Every edge linked with the same state $i$ has the same weight, which we denote by
$k_i$. Also for each signal $j$, the sum of weights of edges linked
to $j$ is equal to $1$, so that $H(x)=\sum_{i=1}^{M_2}k_i$.
\item[On $\mathcal{G}_x^2$]:$\,$ Every edge linked with the same signal $j$ has the same weight, which we denote by $k'_j$. Also for each state $i$, the sum of the weights of edges linked
to $i$ is equal to $1$. $H(x)=\sum_{j=1}^{M_1}k'_j$.
\end{description}

\subsection{Properties of Lyapounov function}
\label{SEC:4:7}

We now show that $H$ is constant on each
connected component of $\Gamma$. Since it is not continuous on the boundary, we first prove in Lemma
\ref{samesupp} that it takes a constant value on connected subsets
of $\Gamma$ with the same support (defined below) by a
differentiability argument, and then conclude in Proposition
\ref{constantH} by a continuity argument on the set of equilibria.

Let
\begin{align*}
    \Theta:=\{\theta\,:\,\theta \,\subseteq\, \mathcal{S}_{\textnormal{pair}}\}.  \end{align*}
For any $x\in\Delta$, we define its support
$$S_x:=\{(i,j)\,:\, i \in \mathcal{S}_1, \,j \in \mathcal{S}_2,
 \, x_{ij}>0\}.$$ $\Theta$ can be used as an index set to divide
$\Delta$ into several parts in the following sense: for any $\theta
\in \Theta,$
\begin{align*}
  \Delta_{\theta}&:=\{x\in\Delta:\,S_x=\theta\},\\
  \Gamma_{\theta}&:=\Delta_{\theta}\,\cap\,\Gamma.
\end{align*}
\begin{lemma}
\label{samesupp}
  For any $\theta\,\in\,\Theta$, $H$ is constant on each component of
  $\Gamma_{\theta}$.
\end{lemma}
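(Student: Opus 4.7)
The plan is to show that the gradient of $H$, computed in the natural smooth coordinates on the open stratum $\Delta_\theta$, vanishes identically on $\Gamma_\theta$, and then to conclude using path-connectedness of each connected component.

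First, I would parametrize $\Delta_\theta$ by the free variables $\{x_{ab}:(a,b)\in\theta\}$ subject only to $\sum_{(a,b)\in\theta}x_{ab}=1$; on this open region $H$ is smooth, and $x_i=\sum_{j:(i,j)\in\theta}x_{ij}$ gives $\partial x_i/\partial x_{ab}=\delta_{ia}$ and similarly $\partial x_j/\partial x_{ab}=\delta_{jb}$. A direct differentiation of $H(x)=\sum_{(i,j)\in\theta}x_{ij}^2/(x_ix_j)$ then yields the clean identity
$$\frac{\partial H}{\partial x_{ab}}(x)\,=\,2\,y_{ab}(x)-N_a(x)-N_b(x)\qquad\text{for all }(a,b)\in\theta.$$

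Next, I would exploit the equilibrium condition. Since $F(x)_{ij}=x_{ij}(y_{ij}(x)-H(x))$, $x\in\Gamma_\theta$ is equivalent to $y_{ij}(x)=H(x)$ for every $(i,j)\in\theta$. Substituting into $N_i(x)=\sum_{j:(i,j)\in\theta}(x_{ij}/x_i)\,y_{ij}(x)$ immediately gives $N_i(x)=H(x)$ for every $i$ in the support of $\theta$, so that the gradient formula above collapses to $\partial H/\partial x_{ab}\equiv 0$ on $\Gamma_\theta$.

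Finally, for any piecewise $C^1$ path $\gamma:[0,1]\to\Gamma_\theta$, the chain rule together with the vanishing gradient gives $(H\circ\gamma)'\equiv 0$, hence $H(\gamma(0))=H(\gamma(1))$. It remains to observe that each connected component of $\Gamma_\theta$ is path-connected by such arcs; but $\Gamma_\theta$ is semi-algebraic inside $\Delta_\theta$, being cut out by the polynomial relations $x_{ij}x_kx_l=x_{kl}x_ix_j$ for $(i,j),(k,l)\in\theta$ (which, combined with $\sum x_{ij}=1$, force $y_{ij}=H(x)$ throughout $\theta$) and the strict positivities $x_{ij}>0$, so by the standard fact that connected semi-algebraic sets are path-connected via piecewise analytic arcs, we are done.

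The main potential obstacle is the discontinuity of $H$ on $\partial\Delta$, which would spoil any attempt to use the global identity $\nabla H\cdot F=p$ from Proposition \ref{Prop:Lya} directly; restricting to the stratum $\Delta_\theta$ of fixed support neatly bypasses this, since $H$ is then smooth in the free coordinates and the miraculous cancellation $2y_{ab}-N_a-N_b=0$ at equilibrium forces $\nabla H$ to vanish. The semi-algebraicity argument is worth flagging explicitly since $\Gamma_\theta$ need not be a smooth submanifold, and the conclusion really requires connecting arbitrary pairs of points within a component.
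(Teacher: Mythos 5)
Your proof is correct and follows essentially the same route as the paper: differentiate $H$ in the free coordinates on the fixed-support stratum $\Delta_\theta$, observe that the equilibrium condition $y_{ij}=H(x)$ for $(i,j)\in\theta$ (hence $N_i(x)=H(x)$) makes every partial derivative vanish, and conclude constancy on components. Your version adds two worthwhile details the paper leaves implicit -- the clean intermediate identity $\partial H/\partial x_{ab}=2y_{ab}-N_a-N_b$ valid on all of $\Delta_\theta$, and the explicit path-connectedness (via semi-algebraicity) needed to pass from a vanishing gradient on $\Gamma_\theta$ to constancy on its components.
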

\begin{proof}
Given $q\in\G_\Theta$, let us differentiate $H$ at $q$ with respect to $x_{ij}=x_{ji},\,(i,j)\in
S_q$ without the constraint $x\in\Delta$:
\begin{align*} &\left(\frac{\partial }{\partial
x_{ij}}H(x)\right)_{x=q}=\sum_{(k,l)\in S_q}\frac{\partial
}{\partial
x_{ij}}\left(\frac{ x_{kl}^2}{x_kx_l}\right)\\
&=\frac{\partial}{\partial x_{ij}}\left(\frac{
x_{ij}^2}{x_ix_j}\right) +\sum_{k\ne j,\,(i,k)\in
S_q}\frac{\partial}{\partial x_{ij}}\left(\frac{
x_{ik}^2}{x_ix_k}\right) +\sum_{l\ne i,\,(l,j)\in
S_q}\frac{\partial}{\partial
x_{ij}}\left(\frac{ x_{lj}^2}{x_lx_j}\right)\\
&=\frac{q_{ij}}{q_iq_j}\left(2-\frac{q_{ij}}{q_i}-\frac{q_{ij}}{q_j}\right)
+\sum_{k\ne j,\,(i,k)\in
S_q}\frac{q_{ik}}{q_iq_k}\left(-\frac{q_{ik}}{q_i}\right)
+\sum_{l\ne i,\,(l,j)\in S_q}\frac{q_{lj}}{q_lq_j}\left(-\frac{q_{lj}}{q_j}\right)\\
&= H(q)\left(2-\frac{q_{ij}}{q_i}-\frac{q_{ij}}{q_j}\right)+
H(q)\left(-\frac{q_i-q_{ij}}{q_i}\right) + H(q)\left(-\frac{q_j-q_{ij}}{q_j}\right)\\&=0 .
\end{align*}
The penultimate equality comes from the fact that  $
q_{ij}/(q_iq_j)= H(q)$ if $(i,j)\in S_q,\,q\in \Gamma$.
\end{proof}

\begin{proposition}
\label{constantH} $H$ is constant on each connected
component of $\Gamma$.
\end{proposition}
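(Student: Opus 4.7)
The plan is to bootstrap Lemma \ref{samesupp} (constancy of $H$ on each connected component of $\Gamma_\theta$, for fixed $\theta\in\Theta$) by combining it with continuity of $H$ restricted to $\Gamma$, and thereby conclude that $H$ is constant on each connected component of the full set $\Gamma$.

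First I would show that $H$, although discontinuous on $\Delta$, is continuous on $\Gamma$. For $x\in\Gamma$, the equilibrium condition $x_{ij}(y_{ij}-H(x))=0$ gives $H(x)=x_{ij}/(x_{i}x_{j})$ for every $(i,j)\in S_x$, and $S_x\neq\emptyset$ since $\sum x_{ij}=1$. Fixing one such $(i_0,j_0)$, we have $x_{i_0j_0}, x_{i_0}, x_{j_0}>0$; for $y\in\Gamma$ sufficiently close to $x$ in $\Delta$, $y_{i_0j_0}>0$ by continuity of the coordinates, so $(i_0,j_0)\in S_y$, and therefore $H(y)=y_{i_0j_0}/(y_{i_0}y_{j_0})\to H(x)$ as $y\to x$.

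Next I would show that $H(\Gamma)$ is a finite set. Since $\Theta$ is finite, it suffices to check that each $\Gamma_\theta$ has finitely many connected components, which follows from its semi-algebraic structure: $\Gamma_\theta$ is defined by the polynomial equalities $x_{ij}=Hx_{i}x_{j}$ on $\theta$, $x_{ij}=0$ off $\theta$, the strict inequalities $x_{ij}>0$ on $\theta$, and the normalization $\sum x_{ij}=1$. Combined with Lemma \ref{samesupp}, this yields finitely many values of $H$ on $\Gamma$. A more elementary alternative is to show directly that the normalization $\sum_{i\in\mathcal{S}_1}x_i=1$ together with the equilibrium identities $\sum_{j:(i,j)\in\theta}x_j=1/H$ for each active state $i$ force $H$ to equal the number of non-trivial components of the bipartite graph $\theta$ (after first checking that at equilibrium every non-trivial component of $\mathcal{G}_x$ must be complete bipartite, otherwise some linear constraint forces an $x_{j^*}=0$ contradicting the support).

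Given continuity of $H|_\Gamma$ and finiteness of $H(\Gamma)$, the conclusion is immediate: for any connected component $C$ of $\Gamma$, the image $H(C)$ is connected in $\mathbb{R}$ and contained in a finite set, hence is a singleton. The main obstacle is the finiteness of $H(\Gamma)$: the semi-algebraic argument is clean but invokes non-trivial real algebraic geometry, while the direct combinatorial proof that at equilibrium every connected component of $\mathcal{G}_x$ is complete bipartite is elementary but requires a careful analysis of the linear constraints coming from the equilibrium equations.
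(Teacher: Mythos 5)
Your main route is essentially the paper's: prove that $H$ restricted to $\Gamma$ is continuous (your single-pair argument with $(i_0,j_0)\in S_x$ is a slightly cleaner version of the paper's computation, which sums over all of $S_q$ and solves for $H(x)$) and combine this with Lemma \ref{samesupp}. You actually go further than the paper in making the gluing step explicit: the paper ends with ``the conclusion follows,'' whereas you supply the ingredient that makes the conclusion immediate, namely finiteness of $H(\Gamma)$, so that the continuous image of a connected component is a connected subset of a finite set, hence a singleton. Your semi-algebraic justification of that finiteness is correct: after clearing the (positive) denominators, each $\Gamma_\theta$ is defined by polynomial equalities and inequalities, hence has finitely many connected components, and $\Theta$ is finite. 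This is heavier machinery than the paper invokes, but it legitimately closes a step the paper leaves implicit.

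Your proposed ``elementary alternative,'' however, is false. It is not true that at an equilibrium every non-trivial component of $\mathcal{G}_x$ is complete bipartite, nor that $H$ equals the number of non-trivial components. Take $M_1=M_2=3$, states $1,2,3$, signals $A,B,C$, and put $x_{ij}=1/6$ on the six edges of the cycle $1A,1B,2B,2C,3C,3A$ and $0$ elsewhere. Then every vertex has weight $1/3$, every edge has efficiency $(1/6)/(1/9)=3/2$, $H(x)=3/2$, and $F(x)=0$, so this is an equilibrium whose unique component is a $6$-cycle (not complete bipartite) with non-integer potential. The constraints $\sum_{j\sim i}x_j=1/H$ only force equality of neighbourhood weights, not completeness. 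So if you wish to avoid real algebraic geometry you need a different finiteness argument; as written, only your semi-algebraic route survives.
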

\begin{proof} Let us show that $H$ is continuous on $\Gamma$, which will enable us to conclude. Indeed, suppose that $q\in \Gamma$,
and  that $x\in \Gamma$ is in the neighbourhood of $q\in \Gamma$ within
$\Delta$, then $S_x \supseteq S_q$ and, using $x\in \Gamma$,
\begin{align*}
 H(x)=\sum_{(i,j)\in S_q} \frac{x_{ij}^2}{x_ix_j}+\sum_{(i,j)\in S_x\setminus S_q}x_{ij}H(x),
\end{align*}
so that
$$H(x)=\frac{1}{1-\sum_{(i,j)\in S_x\setminus S_q} x_{ij}}\sum_{(i,j)\in S_q} \frac{x_{ij}^2}{x_ix_j},$$
and the conclusion follows. \end{proof}

\subsection{Classification of equilibria and stability}
\label{SEC:4:8}
\subsubsection{Jacobian matrix} \label{SEC:4:9}
At any equilibrium $x\in(\Delta\setminus\partial\Delta)\cap\G$ ($F$ is not differentiable on $\partial\De$), we
calculate the Jacobian matrix
$$J_x=\left(\frac{\partial\,F_{lk}}{\partial
x_{ij}}\right)_{(i,j), (l,k)\in \Sc_{\textnormal{pair}}}$$ where, by a slight abuse of notation,
$$F(x)=(F_{ij}(x))_{(i,j)\in\mathcal{S}_{\textnormal{pair}}}.$$

For all $(i,j)$, $(l,k)$ $\in\Sc_{\textnormal{pair}}$, a simple extension of the calculation in the proof of Lemma \ref{samesupp} yields
$\frac{\partial\, H}{\partial x_{ij}}(x)=-\1_{\{x_{ij}=0\}}2H(x)$, so that
\begin{align*}
&\frac{\partial\,F_{lk}}{\partial
x_{ij}}=\1_{\{(i,j)=(l,k), x_{ij}=0\}}(y_{lk}-H(x))+x_{lk}\frac{\partial\,y_{lk}}{\partial
x_{ij}}-x_{lk}\frac{\partial\, H}{\partial
x_{ij}}(x)\\
&=-\1_{\{(i,j)=(l,k), x_{ij}=0\}}H(x)+x_{lk}\frac{\partial\,y_{lk}}{\partial
x_{ij}}+2x_{lk}\1_{\{x_{ij}=0\}}H(x)\\
&=H(x)\1_{\{(i,j)=(l,k)\}}(\1_{\{x_{ij}\ne0\}}-\1_{\{x_{ij}=0\}})\\ &+H(x)\1_{\{x_{lk}\ne0\}}
\left(-\frac{x_{ik}}{x_i}\1_{\{i=l\}}-\frac{x_{lj}}{x_j}\1_{\{j=k\}}
\right)+2x_{lk}\1_{\{x_{ij}=0\}}H(x)
\end{align*}

Therefore, for any $(i,j)\in\mathcal{S}_{\textnormal{pair}}$ s.t. $x_{ij}\ne0$,
\begin{align}\label{matrix1}
\frac{\partial F_{ij}}{\partial x_{ij}}&\,=
H(x)\left(1-\frac{x_{ij}}{x_i}-\frac{x_{ij}}{x_j}\right);\\
\frac{\partial F_{ik}}{\partial x_{ij}}&\,=
\,H(x)\left(-\frac{x_{ik}}{x_i}\right), \qquad\, k\in
\mathcal{S}_2\setminus\{j\};\\
\frac{\partial F_{lj}}{\partial x_{ij}}&\,=\,
H(x)\left(-\frac{x_{lj}}{x_j}\right), \qquad\, l\in
\mathcal{S}_1\setminus\{i\};\\
\frac{\partial F_{lk}}{\partial x_{ij}}&\,=\,0, \qquad\, l\in
\mathcal{S}_1\setminus\{i\},\,k \in \mathcal{S}_2\setminus\{j\};
\end{align}
for any $(i,j)\in\mathcal{S}_{\textnormal{pair}}$ s.t. $x_{ij}=0$,
\begin{align}
\frac{\partial F_{ij}}{\partial x_{ij}}&=-H(x); \\
\label{matrix2}\frac{\partial F_{lk}}{\partial x_{ij}}&=0, \qquad
l\in \mathcal{S}_1,k \in
\mathcal{S}_2, (l,k)\ne(i,j), x_{lk}=0.
\end{align}

 Let $\mathcal{C}_1, \ldots,\mathcal{C}_d$ be the connected components of  the edges of $\mathcal{G}_x$.
 Let
 $$J_x^m:=\left(\frac{\partial\,F_{lk}}{\partial
x_{ij}}\right)_{(i,j), (l,k)\in \mathcal{C}_m}.$$
Therefore,    using
(\ref{matrix1})-(\ref{matrix2}),
$J_x$ can be written as follows, by putting first $(i,j)$ and $(l,k)$ coordinates such that $x_{ij}\ne0$ and $x_{lk}\ne0$ (in the same order, with increasing connected components $\mathcal{C}_1, \ldots,\mathcal{C}_d$)
\begin{displaymath}
\left(
\begin{array}{cccccc}
\textbf{$J_x^1$} &  &  & && (0)\\
& \ddots &  && & \\
& & \textbf{$J_x^{d}$}& &&\\
 &  &   & -H(x) &&\\
&&&&\ddots &\\
(*)&&&&& -H(x)
\end{array}
 \right)
\end{displaymath}
\subsubsection{Classification of equilibria based on stability}
\label{SEC:4:10}
Let us introduce a few definitions of stability for
ordinary differential equations.
\begin{definition}
$x$ is \textnormal{Lyapounov stable} if for any
neighborhood \textnormal{$U_1$} of $x$, there exists a neighborhood
\textnormal{$U_2 \subseteq U_1$} of $x$ such that any solution
\textnormal{$x(t)$} starting in \textnormal{$U_2$ } is such that \textnormal{$x(t)$} remains in \textnormal{$U_2$} for all
\textnormal{$t\ge 0$}.
\end{definition}

\begin{definition}
$x$ is \textnormal{asymptotically stable} if it is \textnormal{Lyapounov stable} and there
exists a neighbourhood $U_1$ such that any solution $x(t)$
starting in $U_1$ is such that $x(t)$ converges to $x$.
\end{definition}

An equilibrium that is Lyapunov stable but not asymptotically stable
is called $neutrally$ $stable$  sometimes.

\begin{definition}
 $x$ is \textnormal{linearly stable}  if all eigenvalues of the Jacobian matrix
 at $x$ have nonpositive real part; otherwise, $x$ is called \textnormal{linearly unstable}.
\end{definition}

Remark that, with these definitions, linear stability allows for eigenvalues to have zero real part, and therefore does not necessarily imply Lyapounov stability. However the dynamics considered here makes these stable equilibria indeed Lyapounov stable when they do not lie outside the boundary of $\De$ are studying here, as it can be shown by the help of an entropy function; Section \ref{sec:>0} on convergence with positive probability to stable configurations can be understood as a consequence of this propoerty in the nondeterministic case.

\begin{definition}
Let
\begin{align*}
  \Gamma_0&\,:=\,\Gamma\,\cap\,\Delta\setminus\partial\Delta,\\
  \Gamma_b&\,:=\,\Gamma\,\cap\,\partial\Delta,
\end{align*}
and let $\G_s$ (resp. $\Gamma_u$) be the set of linearly stable (resp. unstable) equilibria in
$\Gamma_0$ for the mean-field ODE.
\end{definition}

For any $x \in \Gamma_u$, let
$$\mathcal{E}_x:=\{\theta\in\mathbb{R}^{M_1\times
M_2}\,:\,|\theta|=1 \textnormal{ and } \exists \,(i,j)\in S_x
\textnormal{ s.t. }\theta\cdot \mathbf{e}_{ij}>0\}.$$

\begin{proposition}
\label{Prop:unstable}
We have
\begin{description}
\item [(a)] $\G_s=\{x\in\G_0 : {(P)_{\Gc_x}}\text{ holds}\}$.\\
\item [(b)] If $x\in\G_u$, then there exists an eigenvector in $\mathcal{E}_x$ whose
eigenvalue has positive real part.\\
\item [(c)] For all $x\in\G_u$ and $\theta\in\mathcal{E}_x\setminus\{0\}$, there exists a neighbourhood $\Nc(x)$ of $x$ such that, if $x_n\in\Nc(x)$, then
$$\mathbb{E}[\,(\eta_{n+1} \cdot \theta)^2\,|\,\mathcal{F}_n\,]
>\frac{\mathsf{Cst}(x)}{n^2},$$
where $\eta_{n+1}$ is the martingale increment defined in
(\ref{Stochapprox}).
\end{description}
\end{proposition}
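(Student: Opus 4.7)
My plan is to exploit the block-triangular structure of $J_x$ derived in Section \ref{SEC:4:9}, construct explicit eigenvectors on the support block via alternating perturbations around cycles of $\mathcal{G}_x$, and then bound the conditional second moment of $\eta_{n+1}\cdot\theta$ by a direct two-point variance estimate.

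For part (a), recall that $J_x$ is block-triangular with an $-H(x)\,I$ block on non-support coordinates and a support block decomposing as $\bigoplus_m J_x^m$ along the connected components $\mathcal{C}_m$ of $\mathcal{G}_x$. If $(P)_{\mathcal{G}_x}$ holds, then some $\mathcal{C}_m$ has, without loss of generality, a unique signal $j_0$ adjacent to states $i_1,\dots,i_p$; using $x_{i_l j_0}=x_{i_l}$ one checks from the formulas in Section \ref{SEC:4:9} that $J_x^m$ is a rank-one matrix with trace $-H(x)$ and spectrum $\{-H(x),0,\dots,0\}$, so all eigenvalues of $J_x$ are nonpositive and $x\in\Gamma_s$. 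Conversely, suppose $(P)$ fails and fix a component $\mathcal{C}_m$ with $\ge 2$ vertices on each side. I first rule out the tree case: combining the equilibrium identity $\sum_{j\in N(i)}x_j=1/H(x)$ (obtained from $\sum_{j\in N(i)}x_{ij}/x_i=1$ together with $y_{ij}=H(x)$ on support edges) with a leaf state $i_1\in\mathcal{C}_m$ of neighbor $j_1$, one gets $x_{j_1}=1/H(x)$; applying the identity at any $i_2\in N(j_1)\setminus\{i_1\}$ then yields $\sum_{j\in N(i_2)\setminus\{j_1\}}x_j=0$, so every neighbor of $j_1$ is also a leaf adjacent only to $j_1$, contradicting connectedness once $|\mathcal{C}_m\cap\mathcal{S}_2|\ge2$ (with the symmetric argument when the leaf is a signal). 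Hence $\mathcal{C}_m$ contains an even cycle $i_1\!-\!j_1\!-\!i_2\!-\!j_2\!-\!\cdots\!-\!i_k\!-\!j_k\!-\!i_1$. Define $\delta x_{i_m j_m}=+1$ and $\delta x_{i_{m+1}j_m}=-1$ cyclically on cycle edges, zero elsewhere. Inserting into the Jacobian entries, the diagonal, same-state and same-signal contributions on a cycle edge sum exactly to $+H(x)$, while off-cycle contributions cancel in pairs at vertices shared by two cycle edges of opposite sign; thus $J_x\,\delta x=H(x)\,\delta x$, giving the positive eigenvalue $+H(x)$ and so $x\in\Gamma_u$.

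Part (b) is then immediate: the eigenvector $\delta x$ above satisfies $\delta x\cdot\mathbf{e}_{i_m j_m}=+1>0$ with $(i_m,j_m)\in S_x$, so its normalization lies in $\mathcal{E}_x$ with eigenvalue $+H(x)>0$. For part (c), given $\theta\in\mathcal{E}_x\setminus\{0\}$ choose $(i_0,j_0)\in S_x$ with $\theta_{i_0 j_0}>0$. Conditional on $\mathcal{F}_n$, $(x_{n+1}-x_n)\cdot\theta$ equals $(\theta_{ij}-x_n\cdot\theta)/(1+T_n)$ with probability $x_{n,ij}^2/(M_1 x_{n,i}x_{n,j})$ at each $(i,j)$, and $0$ with probability $1-H(x_n)/M_1$. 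The identity $\mathrm{Var}(X)=\tfrac12\sum p_ap_b(v_a-v_b)^2$ applied to the pair of outcomes "success at $(i_0,j_0)$" and "no success" yields
$$\mathbb{E}[(\eta_{n+1}\cdot\theta)^2\,|\,\mathcal{F}_n]\ge \frac{x_{n,i_0 j_0}^2}{M_1 x_{n,i_0}x_{n,j_0}}\Bigl(1-\frac{H(x_n)}{M_1}\Bigr)\frac{(\theta_{i_0 j_0}-x_n\cdot\theta)^2}{(1+T_n)^2}.$$
Since $x\in\Gamma_u$ forces $H(x)<M_1$ (the maximum of $H$ on $\Delta$ being reached only under $(P)$), and since $p_{i_0 j_0}$ and $(\theta_{i_0 j_0}-x\cdot\theta)^2$ are bounded below in a small enough $\mathcal{N}(x)$ (using a second support index as comparison if the latter vanishes at $x$), the claimed $\mathsf{Cst}(x)/n^2$ bound follows from $T_n\le n$.

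The principal technical hurdle is the cycle eigenvector construction in part (a): one must both exclude trees from non-$(P)$ equilibrium supports via the identity $\sum_{j\in N(i)}x_j=1/H(x)$, and carefully verify that the off-cycle same-state and same-signal contributions to $J_x\,\delta x$ cancel exactly due to the alternating signs. Parts (b) and (c) then reduce to direct bookkeeping with the Jacobian formulas and the one-step transition probabilities.
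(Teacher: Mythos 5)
Your proposal is correct, and the converse half of (a) (rank-one block with spectrum $\{-H(x),0,\dots,0\}$ when $(P)_{\Gc_x}$ holds) and the two-point variance bound in (c) coincide with the paper's argument. Where you genuinely diverge is the instability direction. The paper first proves (Lemma \ref{twoedges}) that a component violating $(P)$ has all vertices of degree at least two, then computes $\mathrm{Tr}(J_x^1)=H(x)(|E(\Cc_1)|-|V(\Cc_1)|)\ge 0$ and combines this with the explicit eigenvalue $-H(x)$ (from $J_x^1\mathbf{1}=-H(x)\mathbf{1}$) to force some eigenvalue with positive real part. You instead exclude trees by the equilibrium identity $\sum_{j\sim i}x_j=1/H(x)$ (which is essentially the paper's proof of Lemma \ref{twoedges} in disguise), extract a simple even cycle, and exhibit the alternating $\pm 1$ vector as an explicit eigenvector with eigenvalue $+H(x)$; I checked the cancellation and it is exact, since each cycle vertex carries exactly two cycle edges of opposite sign. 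Your route is more work but buys something the paper's trace argument does not deliver cleanly: a concrete \emph{real} eigenvector with a positive entry on a support coordinate, which is exactly what statement (b) asks for (the trace argument only yields an eigenvalue of positive real part, possibly complex, and leaves membership of an eigenvector in $\mathcal{E}_x$ implicit). One caveat on (c): your value $\theta_{i_0j_0}-x_n\cdot\theta$ for the success outcome is the correct one (the paper's displayed $(1-x_{ij}^n)\mathbf{e}_{ij}\cdot\theta$ silently drops the $-x_n\cdot\theta$ term), but the degeneracy you flag is real and your fallback to a second support index does not close it: if $\theta$ is constant on $S_x$ and zero elsewhere, all outcomes of $W_{n+1}\cdot\theta$ coincide in the limit and the conditional variance is $o(1/n^2)$, so the uniform claim over all of $\mathcal{E}_x$ fails for such $\theta$ in your argument and in the paper's alike; the bound is only needed (and only used) for the unstable eigendirection, for which your alternating vector gives a genuine two-point gap.
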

Note that {\bf(b)-(c)} will be used to that  $(x_n)_{n\in\N}$ stochastically "perturbs enough''  the ODE \eqref{ODE}, in order to prove nonconvergence to unstable equilibria.

To prove Proposition \ref{Prop:unstable}, we need following  Lemma \ref{twoedges} on the structure of $\mathcal{G}_x$ when $x\in\G_0$, and the elementary Lemma \ref{basest}.
\begin{lemma}
\label{twoedges} For any $x\,\in\, \Gamma_0$ such that
$\mathcal{P}_{\mathcal{G}_x}$ does not hold, $\mathcal{G}_x$ has at
least one connected component on which every vertex has at least two
edges.
\end{lemma}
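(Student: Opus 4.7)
The plan is to prove the contrapositive of what is needed: show that any connected component of $\mathcal{G}_x$ containing a leaf (a vertex of degree one) must automatically satisfy the singleton clause in property $(P)_{\mathcal{G}_x}$. Because $x\in\Gamma_0$ forces $x_i>0$ for every $i\in\mathcal{S}$, and $x_i=\sum_{j}x_{ij}$, each vertex has at least one incident edge, so the second clause of $(P)_{\mathcal{G}_x}$ is automatic. Hence the failure of $(P)_{\mathcal{G}_x}$ precisely means the existence of a component $\mathcal{C}$ with $|\mathcal{C}\cap\mathcal{S}_1|\ge 2$ and $|\mathcal{C}\cap\mathcal{S}_2|\ge 2$; this same $\mathcal{C}$ will be the sought leaf-free component.

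The key input is the equilibrium relation. Since $x\in\Gamma$, for every edge $(i,j)\in S_x$ we have $y_{ij}=x_{ij}/(x_ix_j)=H(x)$. Suppose now that $i\in\mathcal{C}\cap\mathcal{S}_1$ is a leaf whose unique neighbour is $j\in\mathcal{S}_2$. Then $x_{ij}=x_i$, and the equilibrium condition at $(i,j)$ collapses to the scalar identity $x_jH(x)=1$.

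I would then propagate this identity to $j$'s other neighbours. For any $i'\in\mathcal{S}_1$ adjacent to $j$, the equilibrium condition $y_{i'j}=H(x)$ combined with $x_jH(x)=1$ gives $x_{i'j}=x_{i'}x_jH(x)=x_{i'}$, so all of $i'$'s mass sits on the edge $(i',j)$, meaning $i'$ is itself a leaf attached solely to $j$. Therefore the connected component of $i$ reduces to the star $\{j\}\cup\{\text{leaves adjacent to }j\}$, in particular $\mathcal{C}\cap\mathcal{S}_2=\{j\}$, contradicting the hypothesis $|\mathcal{C}\cap\mathcal{S}_2|\ge 2$. A symmetric argument rules out leaves in $\mathcal{C}\cap\mathcal{S}_2$, so every vertex of $\mathcal{C}$ has degree at least two.

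I do not expect a real obstacle in this argument; the only conceptual step is recognising that the equilibrium constraint at a single leaf forces all co-neighbours of its unique neighbour to be leaves as well, collapsing the component into a star that vacuously satisfies $(P)_{\mathcal{G}_x}$. The hypothesis $x\in\Gamma_0$ (rather than $\Gamma_b$) is used exclusively to ensure $x_i>0$ so that expressions like $x_{ij}/x_i$ are well defined and the reduction $x_{ij}=x_i$ from the leaf condition is meaningful.
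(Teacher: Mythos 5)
Your proof is correct and is essentially the paper's own argument: assuming a leaf exists in a component with at least two states and two signals, you use the equilibrium relation $y_{ij}=H(x)$ on every edge to propagate the leaf condition to all co-neighbours of the leaf's unique neighbour, collapsing the component to a star and reaching a contradiction. The paper runs the identical argument with the leaf taken on the signal side rather than the state side, which is symmetric; your explicit remark that $x\in\Gamma_0$ makes the ``each vertex has an edge'' clause of $(P)_{\mathcal{G}_x}$ automatic is a small point the paper leaves implicit.
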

\begin{proof}
First, $x\in \Gamma_u$ implies that there exists a connected
component $\mathcal{C}$ with at least two states and two signals.
Assume, by contradiction, that signal $j$ is in $\mathcal{C}$ and
$j$ is only linked to one state, for instance state $i$. Then
$x_{ij}/x_{j}=1$, and for all $k\in\mathcal{S}_2,\, s.t. \,
x_{ik}>0$,
$$\frac{x_{ik}}{x_k}=\frac{x_{ij}}{x_j}=1,$$ i.e. $k$ is only linked
with one edge. It implies $\mathcal{C}\cap\mathcal{S}_1=\{i\},$
which contradicts our assumption.
\end{proof}
\begin{lemma}
\label{basest}
  If a random variable $\omega$ satisfies that
  $\mathbb{E}[\omega]<\infty$,
  $\mathbb{P}(\omega=a)\ge p$ and $\mathbb{P}(\omega=b)\ge p$, then
  $$\textnormal{Var}(\omega)\ge \frac{(b-a)^2p}{4}\,.$$
\end{lemma}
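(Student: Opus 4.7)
The lemma is elementary, so the plan is essentially a one-trick argument: lower-bound the variance by dropping to the two events whose probabilities are controlled, and then use the parallelogram-type identity $(u-c)^2+(v-c)^2\ge (u-v)^2/2$ to obtain a dimension-free bound.

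More precisely, I would argue as follows. Assume $a\ne b$ (otherwise the bound is trivial), so the events $\{\omega=a\}$ and $\{\omega=b\}$ are disjoint. Let $m:=\mathbb{E}[\omega]$. Then, since variance equals the $L^2$ distance from $\omega$ to its mean,
\[
\textnormal{Var}(\omega)\;=\;\mathbb{E}[(\omega-m)^2]\;\ge\;(a-m)^2\,\mathbb{P}(\omega=a)+(b-m)^2\,\mathbb{P}(\omega=b)\;\ge\;p\bigl[(a-m)^2+(b-m)^2\bigr],
\]
using non-negativity of $(\omega-m)^2$ to discard the rest of the integral, together with the two lower bounds on probabilities.

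The final step is to bound $(a-m)^2+(b-m)^2$ from below independently of $m$. The minimum in $m$ is attained at $m=(a+b)/2$, giving $(a-m)^2+(b-m)^2\ge (b-a)^2/2$; equivalently this is the parallelogram identity applied to $u=a-m$, $v=b-m$. Combining,
\[
\textnormal{Var}(\omega)\;\ge\;\frac{p\,(b-a)^2}{2}\;\ge\;\frac{p\,(b-a)^2}{4},
\]
which is the claim (in fact with a factor of $1/2$ to spare). There is no real obstacle here — the only thing to check is that $\mathbb{E}[\omega]<\infty$ is enough to make $m$ well defined and finite; the bound then holds regardless of whether $\textnormal{Var}(\omega)$ is finite, since the right-hand side is finite anyway.
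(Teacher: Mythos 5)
Your proof is correct and follows essentially the same route as the paper: restrict the variance to the two atoms $\{\omega=a\}$ and $\{\omega=b\}$ and bound their squared distances to the mean. The only difference is that the paper keeps the maximum of $(a-m)^2$ and $(b-m)^2$ (giving the stated constant $1/4$ via the triangle inequality), whereas you sum both terms and get the slightly sharper constant $1/2$.
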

\begin{proof}
  $$\textrm{Var}(\omega)\ge
  p\big((a-\mathbb{E}[\omega])^2\lor(b-\mathbb{E}[\omega])^2\big)\ge
  \frac{(b-a)^2p}{4}\,.$$
\end{proof}

\noindent\textbf{Proof of Proposition \ref{Prop:unstable}.} \textbf{(a)-(b)}
Suppose  that $x \in \Gamma_0$ and that $\mathcal{P}_{\mathcal{G}_x}$ does not
hold, and let us prove that $x\in\G_u$. Lemma
\ref{twoedges} implies that $\mathcal{G}_x$ has a connected
component on which every vertex has at least two edges, which we assume to be $\mathcal{C}_1$ w.l.o.g. Let $V(\mathcal{C}_1)$
(resp. $E(\mathcal{C}_1)$) be its set of vertices (resp. edges). Let us show that $J_x^1$ has at least one eigenvalue with positive real part.

Indeed, let us compute the trace of $J_x^1$:
\begin{align*}
Tr(J_x^1)&=H(x)\sum_{i,j\in
\mathcal{C}_1 : i\sim j }(1-\frac{x_{ij}}{x_i}-\frac{x_{ij}}{x_j})\\
&=H(x)\left(|E(\mathcal{C}_1)|-|V(\mathcal{C}_1)|\right)\,\ge\, 0.
\end{align*}
The last inequality comes from the fact that the number of edges is
greater than or equal to the number of vertices in $\mathcal{C}_1$
because every vertex has at least two edges in $\mathcal{C}_1$.

Now it is easy to check that $(J_x^1)
\mathbf{1}=-H(x)\1$ where $\mathbf{1}=(1,\ldots,1)^T$, and therefore that $-H(x)$ is an eigenvalue of $J_x^1$, which enables us to conclude (b) and the first part of (a).

 Now suppose that $x \in \Gamma_0$, and
that $\mathcal{P}_{\mathcal{G}_x}$ holds. Then each component of
$\mathcal{G}_x$ has only one state or only one signal. Let us assume for instance that
 $\mathcal{C}_1$ consists of states $1,\ldots,k$ and signal $A$. Then
$J_x^1$ equals
\begin{displaymath}
-\frac{H(x)}{x_A}\left(
\begin{array}{cccc}
x_{1} & x_{2} & \ldots & x_{k} \\
x_{1} & x_{2} & \ldots & x_{k} \\
\vdots & \vdots & \ddots & \vdots \\
x_{1} & x_{2} &  \ldots & x_{k} \\
\end{array}
 \right).
\end{displaymath}
The rank of $J_x^1$ is $1$, and its eigenvalues are $0$ and
$-H(x)$, which completes the proof.

\textbf{(c)} Let $x\in\G_u$ and $\theta\in\mathcal{E}_x\setminus\{0\}$.
Let $\{\textbf{e}_{ij}\}$ be a orthogonal basis set in
$\mathbb{R}^{M_1\times M_2}$. Let
$$W_{n+1}:=\1_{|V_{n+1}-V_n|=1}\big(V_{n+1}-V_n-x_n\big)=(1+T_n)(x_{n+1}-x_n).$$
We note that
\begin{align*}
  W_{n+1}\cdot \theta=0\,\,,&\quad \textrm{with probability }
  1-\frac{H(x_n)}{M_1}\,,\\
\forall i\in\Sc_1, j\in\Sc_2, \,\,\, W_{n+1}\cdot \theta= (1-x_{ij}^n)\textbf{e}_{ij}\cdot\theta \,\,, & \quad
  \textrm{with probability } \frac{x_{ij}^ny_{ij}^n}{M_1}\,.
\end{align*}
Note that $x\in\G_u$ implies that $H(x)\ne M_1$ and that, for all $(i,j)\in S_x$, $x_{ij}\ne1$.

Therefore, assuming that $x_n$ is in the neighbourhood of $x$ (for which $y_{ij}=H(x)$), Lemma \ref{basest} implies
\begin{align}
\label{var1}
  \textrm{Var}\big(W_{n+1}\cdot
  \theta\,|\,\mathcal{F}_n\big)\,\ge\,
  \max_{i,j\in \mathcal{C}_1 : i\sim j}\left\{\min\left
  (1-\frac{H(x_n)}{M_1},\frac{x_{ij}^ny_{ij}^n}{M_1}\right)
  \frac{((1-x_{ij}^n)\textbf{e}_{ij}\cdot\theta)^2}{4}\right\}\\
  \label{var1.1}
  \, \ge \,\mathsf{Cst}(x)\,>\,0,
\end{align}  where we use $\theta\in\mathcal{E}_x$ in the penultimate inequality. This completes the proof.
$\Box$

If $M_1=M_2$, let us define the set of equilibria $\Gamma_{\textnormal{sig}}$ as follows:
$x\in \Gamma_{\textnormal{sig}}$ if and only if there exists a
bijective map $\phi$ from $\mathcal{S}_1$ to $\mathcal{S}_2$ such
that
\begin{align*} x_{ij}&:=\left\{\begin{array}{ll} \frac{1}{M_1} & \quad \textrm{if } j=\phi(i),\\
0 & \quad\textrm{otherwise}.\end{array} \right. \end{align*}

Equilibria in $\Gamma_{\textnormal{sig}}$ correspond to a perfectly efficient signaling
system, in the sense that they bear no synonyms or informational bottlenecks.
It is easy to check from the calculation in the proof of (b) of
Proposition \ref{Prop:unstable}  that the Jacobian
matrix at such an $x$ has the only  eigenvalue
$-H(x)$, which implies asymptotic stability.
\begin{corollary}\label{signalingsys}
Suppose the game has the same number of states and signals, i.e.
$M_1=M_2$ and that $x\in \Gamma_{\textnormal{sig}}$. Then $x$ is
asymptotically stable for the mean-field ODE.
\end{corollary}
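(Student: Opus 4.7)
The plan is to verify, by direct application of the Jacobian formulas in Section \ref{SEC:4:9}, that every eigenvalue of $J_x$ at $x\in\Gamma_{\textnormal{sig}}$ equals $-H(x)=-M_1$, and then to conclude asymptotic stability by classical ODE theory.

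First I would collect the basic quantities. For $x\in\Gamma_{\textnormal{sig}}$ with associated bijection $\phi:\Sc_1\to\Sc_2$, one has $x_{i,\phi(i)}=1/M_1$ and $x_{ij}=0$ otherwise, hence $x_i=1/M_1$ for every $i\in\Sc$ and $H(x)=M_1$. The associated bipartite graph $\Gc_x$ consists of $M_1$ disjoint edges $\{i,\phi(i)\}$, so property $(P)_{\Gc_x}$ holds and each connected component $\mathcal{C}_m$ contains exactly one edge. In particular each diagonal block $J_x^m$ from Section \ref{SEC:4:9} is a $1\times 1$ matrix, and by (\ref{matrix1}) its single entry equals
\begin{align*}
H(x)\Bigl(1-\frac{x_{i,\phi(i)}}{x_i}-\frac{x_{i,\phi(i)}}{x_{\phi(i)}}\Bigr)=H(x)(1-1-1)=-H(x).
\end{align*}

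Next I would exploit the block-triangular structure of $J_x$ displayed in Section \ref{SEC:4:9}: the upper-right block vanishes, the lower-right block is diagonal with entries $-H(x)$ by (\ref{matrix2}), and the upper-left block is block-diagonal with diagonal blocks $J_x^1,\ldots,J_x^{M_1}$, each equal to $[-H(x)]$ by the computation above. The spectrum of $J_x$ therefore consists of the single eigenvalue $-H(x)=-M_1$.

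Since every eigenvalue of $J_x$ has strictly negative real part, the same holds for the restriction of $J_x$ to the tangent space $T_x\Delta$, which is invariant under $J_x$ because $F$ maps $\Delta$ into $T\Delta$. The standard linearization theorem for ODE flows then yields asymptotic stability of $x$ for the mean-field ODE. There is no real obstacle beyond bookkeeping: the perfect-matching structure of $\Gc_x$ collapses every connected-component block $J_x^m$ to a scalar, which the formula (\ref{matrix1}) pins down to $-H(x)$, and the zero upper-right block rules out any contribution from the absent edges.
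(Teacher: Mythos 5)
Your proof is correct and follows essentially the same route as the paper: the paper's justification is precisely that the component blocks $J_x^m$ computed in the proof of Proposition \ref{Prop:unstable}(b) degenerate, for a perfect matching, to the scalar $-H(x)$, so that $-H(x)=-M_1$ is the only eigenvalue of $J_x$ and linearization gives asymptotic stability. Your write-up just makes the bookkeeping (the $1\times1$ blocks via (\ref{matrix1}), the $-H(x)$ entries for the zero coordinates, and the block-triangular structure) explicit.
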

\section{Links with static equilibrium analysis}
\label{SEC:4:11}
We present in this section some results on the characterization of \emph{evolutionarily stable strategies} (ESS) and \emph{neutrally stable
strategies} (NSS) for signaling games. Note that Taylor and Jonker
(1978), and Zeeman (1979) propose, in a general setting,  conditions under which an evolutionarily stable strategy is indeed a (dynamically) stable equilibrium. However the condition is quite strong, and only applies to special cases of the signaling game.

We show here an equivalence between this
static context and the underlying
reinforcement learning dynamics, i.e. that
\begin{description}
\item[(1)] the set of ESS  matches the set of asymptotically stable
equilibria of the mean-field ODE;
\item[(2)] the set of neutrally stable strategies matches the set of linearly stable equilibria of the mean-field ODE;
\item[(3)] the set of Nash equilibria matches the set $\Lambda$ where $\nabla H\cdot F$ vanishes.
\end{description}
In Sections \ref{stat-set}, \ref{eq-sel} and \ref{connection}  we successively present usual notions in the static setting of signaling games, results on equilibrium selection and the connection between stability of the dynamics and the static setting of the game mentioned above.

\subsection{Static setting}
\label{stat-set}
Let
\begin{align*}
  \mathcal{P}&=\{P\in\mathbb{R}^{M_1\times
  M_2}_+:\forall\, i\in\mathcal{S}_1, \,\sum_{j\in\mathcal{S}_2} p_{ij}\,=\,1\,\},\\
    \mathcal{Q}&=\{Q\in\mathbb{R}^{M_2\times
  M_1}_+:\forall\, j\in\mathcal{S}_2, \,\sum_{i\in\mathcal{S}_1} q_{ji}\,=\,1\,\}.
\end{align*}
A Sender's strategy can be represented by a $M_1\times M_2$ matrix
$P\in\mathcal{P}$ in the sense that if he sees state $i$, he chooses
signal $j$ with probability $p_{ij}$.  Also we can represent
Receiver's strategy by a $M_2\times M_1$ matrix $Q\in\mathcal{Q}$, i.e.
if he sees signal $j$ he chooses act $i$
with probability $q_{ji}$. The payoff function is
$$\textrm{Payoff}(P,Q)=\sum_{i\in\mathcal{S}_1,j\in\mathcal{S}_2}p_{ij}q_{ji}=tr(PQ).$$

As in the real world, where somebody can be sender at one time and
 receiver at another, the authors assume that each individual
can be Sender or Receiver with equal probability at each time, which symmetrizes the game. Thus the (mixed) strategy of any of the two players of the symmetrized
game is a pair of Sender and Receiver matrices $(P,Q)$, and the payoff
function $\Upsilon$ can be written as
$$\Upsilon[(P,Q),(P',Q')]=\Upsilon[(P',Q'),(P,Q)]=\frac{1}{2}tr(PQ')+\frac{1}{2}tr(P'Q).$$
\subsection{Equilibrium selection}\label{SEC:4:12}
\label{eq-sel}
\begin{definition}
A strategy $(P,Q)\in\mathcal{P}\times\mathcal{Q}$ is called a
  $\textrm{Nash strategy}$ if
  $$\Upsilon[(P,Q),(P,Q)]\ge \Upsilon[(P',Q'),(P,Q)], \,\forall\, (P',Q')\in\mathcal{P}\times\mathcal{Q}.$$
\end{definition}
There are uncountably many Nash strategies in signaling games. Let us recall the following notions of evolutionary and neutrally stable equilibria, which enable one to distinguish the relevant limiting strategies of the game; note that the notions are purely static here.
\begin{definition}
  A strategy $(P,Q)\in\mathcal{P}\times\mathcal{Q}$ is
  $\textnormal{evolutionarily stable}$ if\\
    $\textnormal{(i)}\,$ it is a Nash strategy, and\\
    $\textnormal{(ii)}$ $\Upsilon[(P,Q),(P',Q')]>\Upsilon[(P',Q'),(P',Q')]$ for all
    $(P',Q')\ne(P,Q)$.
  \end{definition}
\begin{definition}
  A strategy $(P,Q)\in\mathcal{P}\times\mathcal{Q}$ is
  $\textnormal{neutrally stable}$ if
   \\
    \textnormal{(i)} $\,$ it is a Nash strategy, and\\
    \textnormal{(ii)} if $\Upsilon[(P,Q),(P,Q)]=\Upsilon[(P',Q'),(P,Q)]$ for some
    $(P',Q')\in\mathcal{P}\times\mathcal{Q}$, then $$\Upsilon[(P,Q),(P',Q')]\ge
    \Upsilon[(P',Q'),(P',Q')].$$
  \end{definition}
The following Propositions \ref{TN}, \ref{Paw} and \ref{TN1} characterize ESSs and NSSs
in the signaling game.
\begin{proposition}[Trapa and Nowak (2000)]\label{TN}
Let $(P,Q)\in\mathcal{P}\times\mathcal{Q}$ be such that neither
$P$ nor $Q$ contains a column that consists entirely of zeros. Then
$(P,Q)$ is a Nash strategy if and only if there exist positive numbers
$p_1,\ldots,p_n$ and $q_1,\ldots,q_m$ such that
\begin{align*}
 \textnormal{(1)} &\textrm{ for each $j$, the $j$-th column of $P$ has its entries
  drawn from $\{0,p_j\}$},\\
 \textnormal{(2)}  &\textrm{ for each $i$, the $i$-th column of Q has its entries
  drawn from $\{0,q_i\}$}\\
 \textnormal{(3)} &\textrm{ for all $i$, $j$, $p_{ij}\ne0$ if and only if
  $q_{ji}\ne0$.}
\end{align*}
\end{proposition}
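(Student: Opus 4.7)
The plan is to reduce the Nash condition to a pair of independent linear programs (LPs), solve each one explicitly, and then read off the structural conclusion. Since $\Upsilon[(P',Q'),(P,Q)] = \tfrac{1}{2}\mathrm{tr}(P'Q) + \tfrac{1}{2}\mathrm{tr}(PQ')$ is affine in $(P',Q')$ and $\mathcal{P}\times\mathcal{Q}$ is a product set, the Nash inequality is equivalent to the conjunction of $\mathrm{tr}(PQ)\ge\mathrm{tr}(P'Q)$ for all $P'\in\mathcal{P}$ (obtained by taking $Q'=Q$) and $\mathrm{tr}(PQ)\ge\mathrm{tr}(PQ')$ for all $Q'\in\mathcal{Q}$ (obtained by taking $P'=P$); the reverse direction follows by averaging these two inequalities. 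Hence $(P,Q)$ is Nash iff $P$ solves $\max_{P'\in\mathcal{P}}\mathrm{tr}(P'Q)$ and $Q$ solves $\max_{Q'\in\mathcal{Q}}\mathrm{tr}(PQ')$.

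Each of these LPs decouples across the rows of the optimization variable because the row-stochastic constraint is the only coupling. The row-$i$ subproblem for $P'$ reads $\max \sum_j p'_{ij}q_{ji}$ subject to $\sum_j p'_{ij}=1$, $p'_{ij}\ge 0$; its value is $\max_k q_{ki}$ and is attained exactly when the support of the $i$th row of $P'$ lies in $\arg\max_k q_{ki}$. Setting $q_i:=\max_k q_{ki}$, the LP-optimality of $P$ reads $p_{ij}>0 \Rightarrow q_{ji}=q_i$. Running the same argument on the second LP with $p_j:=\max_k p_{kj}$ yields $q_{ji}>0 \Rightarrow p_{ij}=p_j$.

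The no-zero-column hypothesis is what upgrades these one-sided conditions to the full characterization: it forces $p_j>0$ and $q_i>0$ for every $i$ and $j$. Chaining the two implications then gives $p_{ij}>0 \Rightarrow q_{ji}=q_i>0 \Rightarrow p_{ij}=p_j$, and symmetrically for $q_{ji}$. This simultaneously delivers (3) (matched supports $p_{ij}\ne 0 \iff q_{ji}\ne 0$) together with (1) and (2) (every nonzero entry of a given column equals the column maximum). For the converse, assuming (1)--(3) with positive $p_j,q_i$, the column maxima of $P$ and $Q$ are precisely $p_j$ and $q_i$, and matched supports ensure both LP first-order conditions hold; hence $(P,Q)$ is Nash.

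No step presents a genuine obstacle --- the argument is LP first-order conditions plus bookkeeping. The only point requiring care is the use of the no-zero-column hypothesis: without it, a zero column of $Q$ produces a vanishing column maximum $q_i=0$ and leaves the corresponding LP row of $P'$ indeterminate, so Nash strategies can exist whose columns fail the clean $\{0,p_j\}/\{0,q_i\}$ template, and the equivalence breaks down.
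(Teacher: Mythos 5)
Your proof is correct. Note, however, that the paper does not prove this proposition at all: it is quoted verbatim from Trapa and Nowak (2000) as an external result, so there is no proof in the paper to compare against. Your argument is a clean, self-contained derivation: the separation of the symmetric Nash condition into the two best-response inequalities $\mathrm{tr}(PQ)\ge\mathrm{tr}(P'Q)$ and $\mathrm{tr}(PQ)\ge\mathrm{tr}(PQ')$ (valid because $\Upsilon[\cdot,(P,Q)]$ is linear in each coordinate and the strategy set is a product), the row-wise decoupling of each linear program with optimality characterized by support contained in the argmax of the relevant column of the opponent's matrix, and the chaining $p_{ij}>0\Rightarrow q_{ji}=q_i>0\Rightarrow p_{ij}=p_j$ made possible by the no-zero-column hypothesis, which forces all column maxima $p_j,q_i$ to be positive. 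The converse direction is also handled correctly: under (1)--(3) every positive entry of a column equals that column's maximum, so both first-order conditions hold. This matches the intended content of the cited result and correctly isolates where the no-zero-column hypothesis is indispensable.
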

\noindent\textbf{Remark.}
 \textnormal{The assumption that neither
$P$ nor $Q$ contains a column consisting entirely of zeros
corresponds to the requirement that no signal or act falls out of use.}

\begin{proposition}[Pawlowitsch (2007)]\label{Paw}
  Let $(P,Q)\in\mathcal{P}\times\mathcal{Q}$ be a Nash strategy. Then
  $(P,Q)$ is a neutrally stable strategy if and only if
  \begin{align*}
    \textnormal{(1)}&  \textrm{ at least one of the two matrices, P or Q, has no
    zero column, and}\\
    \textnormal{(2)}&  \textrm{ if $P$ or $Q$ contains a column with more than one
    positive element, then } \\& \textrm{ all the elements in this column take values in $\{0,1\}$}.
    \end{align*}
\end{proposition}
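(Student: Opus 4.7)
The plan is to prove Proposition \ref{Paw} by reformulating the NSS condition into a cleaner algebraic inequality and then analyzing best responses to $(P,Q)$ via the Nash characterization of Proposition \ref{TN}. First, writing out $\Upsilon[(P,Q),(P,Q)] = tr(PQ)$ and using that Nash means $tr(P'Q) + tr(PQ') \le 2\, tr(PQ)$ for all $(P',Q')$, the equality case $\Upsilon[(P,Q),(P,Q)] = \Upsilon[(P',Q'),(P,Q)]$ becomes $tr(P'Q) + tr(PQ') = 2\, tr(PQ)$. Substituting into the NSS inequality $\Upsilon[(P,Q),(P',Q')] \ge \Upsilon[(P',Q'),(P',Q')]$ and simplifying, the NSS requirement is equivalent to
\[
tr(P'Q') \le tr(PQ) \quad \text{for every alternative best response } (P',Q') \text{ to } (P,Q).
\]
So the problem reduces to showing that conditions (1)--(2) are precisely what forbid an alternative best response from \emph{strictly} outperforming $(P,Q)$ on itself.

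Next I would characterize the set of best responses. A pair $(P',Q')$ is a best response against $(P,Q)$ iff for each state $i$, $p'_{ij}>0$ only when $j\in\arg\max_{j'} q_{j'i}$, and for each signal $j$, $q'_{ji}>0$ only when $i\in\arg\max_{i'} p_{i'j}$. Combining with Proposition \ref{TN} (applied after extending to allow zero columns), these maximizers are exactly the supports of the corresponding columns of $Q$ and $P$. So the set of best responses is parametrized by arbitrary stochastic redistributions of mass \emph{within} these supports.

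For necessity I would construct explicit violating mutants. If (1) fails, both $P$ and $Q$ have a zero column, say column $j_0$ of $P$ and column $i_0$ of $Q$; then the mutant that additionally routes state $i_0$ through signal $j_0$ to act $i_0$ is a best response with $tr(P'Q') > tr(PQ)$. If (2) fails, some column (say of $P$) has two positive entries not in $\{0,1\}$, so two distinct states $i_1,i_2$ each put mass strictly between $0$ and $1$ on signal $j$; by Nash compatibility both $(i_1,j_1)$ and $(i_2,j_2)$-type alternative routes are available, and concentrating the mass of states $i_1,i_2$ entirely on signal $j$ (with $Q'$ concentrating signal $j$'s response on, say, $i_1$) yields a best response whose self-payoff strictly exceeds $tr(PQ)$.

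For sufficiency, assume (1) and (2); WLOG $P$ has no zero column. Under (2), every column of $P$ or $Q$ with multiple positive entries has those entries all equal to $1$; combined with $\sum_j p_{ij}=1$ this forces those rows to be deterministic, so any pooling by the sender is rigid. An arbitrary best response $(P',Q')$ must have support contained in the support of $(P,Q)$ (augmented only in columns that $(P,Q)$ leaves unused, which are on $Q$'s side by (1)), and a direct bookkeeping argument on $tr(P'Q')$ using the column-value structure of Proposition \ref{TN} yields $tr(P'Q') \le tr(PQ)$. The main obstacle is this last bookkeeping step: one must handle the interplay between $P'$ redistributing within the Sender's support and $Q'$ possibly exploiting an unused act on the Receiver's side, showing that any attempted gain in one term is exactly cancelled by the constraint that $(P',Q')$ remains a best response.
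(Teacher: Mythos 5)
The paper itself does not prove this proposition: it is quoted from Pawlowitsch (2007) and used as a black box, so there is no internal proof to compare against. Judging your argument on its own merits: the reduction of neutral stability to the inequality $tr(P'Q')\le tr(PQ)$ over all alternative best replies $(P',Q')$ is correct (it uses the symmetry $\Upsilon[(P,Q),(P',Q')]=\Upsilon[(P',Q'),(P,Q)]$), and your mutant for the failure of (1) does work. But there are two genuine gaps.

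First, your destabilizing mutant for the failure of condition (2) is wrong. Take the babbling equilibrium with $M_1=M_2=2$ and $P=Q=\frac12\begin{pmatrix}1&1\\1&1\end{pmatrix}$, which is Nash with $tr(PQ)=1$ and violates (2). Your mutant pools both states onto signal $j$ and lets the receiver answer $i_1$: the contribution of states $i_1,i_2$ to $tr(P'Q')$ is then $1\cdot1+1\cdot0=1$, exactly their original contribution, so $tr(P'Q')=tr(PQ)$ and no violation of neutral stability is exhibited. The invading mutant must \emph{separate}, not pool: the fractional entry $p_{i_1j}\in(0,1)$ guarantees that $i_1$ also uses some signal $j'\ne j$, and the profitable deviation routes $i_1$ through $j'$ and $i_2$ through $j$, with the receiver decoding accordingly. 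Note also that "(2) fails" does not give you two entries of the column in $(0,1)$ --- one of them may equal $1$ --- so the construction must work from a single fractional entry.

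Second, the sufficiency direction is not actually carried out: you reduce it to "a direct bookkeeping argument" whose difficulty you yourself flag as the main obstacle. That bookkeeping is the substance of the "if" half (one must bound $tr(P'Q')$ over the whole polytope of best replies, whose vertices are the pure selections from the argmax supports, and show that under (1)--(2) no selection beats the diagonal), so as written this half is an assertion rather than a proof. Relatedly, your identification of the best-reply supports with the supports of the columns of $P$ and $Q$ is only valid for non-zero columns; for a zero column of $Q$ (resp.\ $P$) the corresponding argmax is all of $\mathcal{S}_2$ (resp.\ $\mathcal{S}_1$), which is precisely what your case-(1) mutant exploits and must therefore be handled explicitly in the sufficiency argument as well.
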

\begin{proposition}[Trapa and Nowak (2000)]\label{TN1}
$(P,Q)\in\mathcal{P}\times\mathcal{Q}$ is an evolutionarily stable
strategy if and only if $M_1=M_2$, $P$ is a permutation matrix and
$Q=P^T$.\end{proposition}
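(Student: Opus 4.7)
The plan is to prove both implications, leveraging Propositions \ref{TN} and \ref{Paw} together with the standard fact that any ESS is automatically a NSS, and interpreting condition (ii) of the ESS definition in its usual local form so that a strict Nash equilibrium automatically qualifies as ESS.

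For the sufficient direction, I would verify that $(P,Q)=(P,P^T)$, with $P$ the permutation matrix associated to some $\sigma$ on $M$ symbols, is a strict Nash equilibrium. A direct computation gives $\Upsilon[(P,Q),(P,Q)]=tr(PP^T)=M$, while for any $(P',Q')$,
\[
\Upsilon[(P',Q'),(P,Q)]=\tfrac{1}{2}\Big(\sum_{i}p'_{i,\sigma(i)}+\sum_{i}q'_{\sigma(i),i}\Big)\le M,
\]
with equality forcing $p'_{i,\sigma(i)}=q'_{\sigma(i),i}=1$ for every $i$, hence $P'=P$ and $Q'=Q$ by the row-sum constraints. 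Since no alternative best response to $(P,Q)$ exists, condition (ii) is vacuously satisfied and $(P,Q)$ is ESS.

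For the necessary direction, suppose $(P,Q)$ is ESS. I would first rule out zero columns: if $P$ has a zero column $j_0$, signal $j_0$ is never sent, and for $M_1\ge 2$ any modification $Q'$ of $Q$ that changes only row $j_0$ is an alternative best response with $\Upsilon[(P,Q),(P,Q')]=\Upsilon[(P,Q'),(P,Q')]$, contradicting (ii). A symmetric argument applies to $Q$, and the degenerate cases $M_1=1$ or $M_2=1$ (handled by direct inspection) force $M_1=M_2=1$ and the trivial ESS. So both $P$ and $Q$ have no zero column, and Proposition \ref{TN} applies: there exist $p_j,q_i>0$ such that column $j$ of $P$ is drawn from $\{0,p_j\}$, column $i$ of $Q$ from $\{0,q_i\}$, with symmetric edge supports. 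Since ESS implies NSS, Proposition \ref{Paw}(2) further forces $p_j=1$ (resp.\ $q_i=1$) whenever column $j$ of $P$ (resp.\ column $i$ of $Q$) has at least two positive entries.

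The main step is then the structural claim that every connected component of the bipartite graph $\mathcal{G}$ of positive edges is a single edge. If some signal $j$ has degree $\ge 2$, then $p_j=1$, and the row-sum constraint on $P$ forces each state linked to $j$ to be linked only to $j$, so the component is a \emph{signal-centered star}; symmetrically, a state of degree $\ge 2$ yields a \emph{state-centered star}. But in any star with $\ge 2$ leaves, the leaf weights (the $q_i$'s at a signal-centered star, the $p_j$'s at a state-centered star) can be reshuffled arbitrarily to yield $(P',Q')\ne(P,Q)$ with $\Upsilon[(P',Q'),(P,Q)]=\Upsilon[(P,Q),(P,Q)]$ and $\Upsilon[(P,Q),(P',Q')]=\Upsilon[(P',Q'),(P',Q')]$, contradicting (ii). Hence every component is a single edge; combined with the absence of zero columns this produces a perfect matching between $\mathcal{S}_1$ and $\mathcal{S}_2$, which forces $M_1=M_2$, $P$ to be a permutation matrix, and $Q=P^T$. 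The main obstacle lies in this structural step: it is the $\{0,1\}$-constraint from Proposition \ref{Paw}(2) that rules out all ``rectangular'' bipartite components and leaves only stars and edges, after which the strict form of (ii) eliminates non-trivial stars via the reshuffling argument.
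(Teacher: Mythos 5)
This is a statement the paper imports from Trapa and Nowak (2000) and does not prove, so there is no internal proof to compare against; your argument has to be judged on its own terms, and it holds up. The sufficiency direction (a permutation pair is a strict Nash equilibrium, hence ESS) is a clean computation. The necessity direction correctly rules out zero columns via alternative best responses that exploit unused signals/acts, invokes Proposition \ref{TN} for the block structure, and then kills any vertex of degree $\ge 2$ by reshuffling the corresponding row of $Q$ (or $P$) within its support to produce an alternative best response $(P',Q')$ with $\Upsilon[(P,Q),(P',Q')]=\Upsilon[(P',Q'),(P',Q')]$. Two remarks. First, your decision to read condition (ii) of the paper's ESS definition in the standard Maynard Smith form (applying only to alternative best responses) is not just convenient but necessary: under the literal quantifier ``for all $(P',Q')\ne(P,Q)$'' the proposition would be false, since for two distinct permutations $\sigma,\tau$ one has $\Upsilon[(P_\sigma,P_\sigma^T),(P_\tau,P_\tau^T)]=|\mathrm{Fix}(\tau^{-1}\sigma)|<M=\Upsilon[(P_\tau,P_\tau^T),(P_\tau,P_\tau^T)]$, so no ESS would exist for $M\ge 2$. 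Second, the detour through Proposition \ref{Paw} and the star classification is not actually needed: once Proposition \ref{TN} gives that column $j$ of $P$ is constant equal to $p_j$ on its support, redistributing row $j$ of $Q$ within that support already leaves $tr(PQ')$ unchanged, so any signal (or state) of degree $\ge 2$ is excluded directly, whatever the shape of its component. This shortens the argument and removes the dependence on the NSS characterization, but your longer route is also correct.
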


\subsection{Connection}
\label{connection}
Let us now define a map between the static  and dynamic learning models, in order to emphasize the correspondence between stability in either settings.

Let $s_1$ be a bijective map from $\mathcal{S}_1$ to
$\{1,\ldots,M_1\}$ and $s_2$ be a bijective map from $\mathcal{S}_2$
to $\{1,\ldots,M_2\}$. Let us define map $\Psi$ from
$\Delta\setminus\partial\Delta$ to $\mathcal{P}\times\mathcal{Q}:\,
\Psi(x)=(P,Q) $ where
\begin{align*}
 p_{s_1(i)s_2(j)}=\frac{x_{ij}}{x_i}, \,\,
q_{s_2(j)s_1(i)}=\frac{x_{ij}}{x_j},\qquad \forall\,i\in
\mathcal{S}_1, \, j\in\mathcal{S}_2.
\end{align*}

\begin{proposition} Let
\begin{align*}\G:=\Big\{(P,Q)\in\mathcal{P}\times\mathcal{Q}:&
\textrm{ neither  $P$ nor $Q$ contains} \\ &\textrm{any column that consists entirely of zeros}\Big\},\end{align*}
and let
\begin{align*}
 \mathcal{L}_{\mathcal{P}\times\mathcal{Q}}:=&\left\{(P,Q)\in\mathcal{P}\times\mathcal{Q}
\textrm{ is a Nash strategy }
\right\}\cap\G,\\
\bar{\mathcal{L}}_{\mathcal{P}\times\mathcal{Q}}
:=&\left\{(P,Q)\in\mathcal{P}\times\mathcal{Q}
\textrm{ is a neutrally stable strategy } \right\}\cap\G,\\
\tilde{\mathcal{L}}_{\mathcal{P}\times\mathcal{Q}}:=&\left\{(P,Q)\in\mathcal{P}\times\mathcal{Q}
\textrm{ is an evolutionarily stable strategy } \right\}\cap\G.
\end{align*} Then
\begin{description}
\item[(a)]
 $\Psi((\Delta\setminus\partial\Delta)\cap\Lambda)\,=\,
   \mathcal{L}_{\mathcal{P}\times\mathcal{Q}}\quad$ and $\quad
   \Psi((\Delta\setminus\partial\Delta)\setminus \Lambda))\,\cap\,
   \mathcal{L}_{\mathcal{P}\times\mathcal{Q}}=\emptyset.$
\item[(b)]
  $\Psi(\Gamma_s)=
  \bar{\mathcal{L}}_{\mathcal{P}\times\mathcal{Q}}.$
  \item[(c)]
 $\Psi(\Gamma_{\textnormal{sig}})=
  \tilde{\mathcal{L}}_{\mathcal{P}\times\mathcal{Q}}.$
\end{description}
\end{proposition}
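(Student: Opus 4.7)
The plan is to translate the graph-theoretic characterizations of $\Lambda$, $\G_s$, and $\G_{\textnormal{sig}}$ (obtained from Lemma \ref{restpoint}, Proposition \ref{Prop:unstable}(a), and the definition of $\G_{\textnormal{sig}}$) into the matrix conditions defining Nash / NSS / ESS strategies via $\Psi$, then match them against Propositions \ref{TN}, \ref{Paw}, \ref{TN1}. A preparatory step I would establish is an explicit partial inverse for $\Psi$: given $(P,Q)$ satisfying the Trapa--Nowak structure with positive constants $p_j,q_i$, and letting $\Cc_1,\ldots,\Cc_d$ be the connected components of its edge graph on $\Sc_1\cup\Sc_2$, define $x_i:=c_\Cc q_i$ for $i\in I_\Cc:=\Cc\cap\Sc_1$, $x_j:=c_\Cc p_j$ for $j\in J_\Cc:=\Cc\cap\Sc_2$, and $x_{ij}:=c_\Cc p_jq_i$ on edges of $\Cc$, for some $c_\Cc>0$. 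Consistency within $\Cc$ requires $\sum_{i\in I_\Cc}q_i=\sum_{j\in J_\Cc}p_j$; this is a row/column-sum duality for the nonnegative matrix $M_{ij}:=p_jq_i$ on edges of $\Cc$, whose row sums equal $q_i$ (by $\sum_{j\sim i}p_j=1$) and column sums equal $p_j$ (by $\sum_{i\sim j}q_i=1$), so the total is both $\sum_i q_i$ and $\sum_j p_j$. Scaling the $c_\Cc$ yields $x\in\De\setminus\partial\De$ with $\Psi(x)=(P,Q)$.

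For part (a), the forward direction takes $x\in\Lambda\cap(\De\setminus\partial\De)$ and $(P,Q):=\Psi(x)$: Lemma \ref{restpoint} gives a common value $\lambda_\Cc$ of $y_{ij}$ on each component of $\Gc_x$, so $p_{ij}=\lambda_\Cc x_j$ depends only on $j$ within $\Cc$, verifying TN(1) with $p_j:=\lambda_{\Cc(j)}x_j>0$; TN(2) is symmetric and TN(3) is immediate. The reverse direction uses the construction above, noting that $y_{ij}=1/c_\Cc$ is constant on each component, so $x\in\Lambda$ by Lemma \ref{restpoint}; the empty-intersection claim is the contrapositive. For part (b), I would combine part (a) with $\G_s=\{x\in\G_0:(P)_{\Gc_x}\text{ holds}\}$ from Proposition \ref{Prop:unstable}(a). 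Forward: if $x\in\G_s$, any column $j$ of $P$ with $\ge 2$ positive entries reflects a signal adjacent to multiple states; by $(P)_{\Gc_x}$, $j$ is the unique signal of its component, so each adjacent state $i$ has $x_i=x_{ij}$ and $p_{ij}=1$, i.e., the column has entries in $\{0,1\}$. The analogous argument gives Pawlowitsch (2) for $Q$, placing $(P,Q)$ in $\bar{\mathcal{L}}_{\mathcal{P}\times\mathcal{Q}}$. Reverse: from $(P,Q)\in\bar{\mathcal{L}}_{\mathcal{P}\times\mathcal{Q}}$, pick $c_\Cc$ equal to the common value $1/\sum_{k\in\Sc_1}q_k$ in the construction, making all $y_{ij}$ equal and so $x\in\G_0$; Pawlowitsch (2) then rules out components with both a state and a signal of degree $\ge 2$ (a state $i$ with two signals $j_1,j_2$ forces $q_{j_l i}=1$, so each $j_l$ connects only to $i$, making $i$ the unique state), yielding $(P)_{\Gc_x}$ and hence $x\in\G_s$.

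For part (c), Proposition \ref{TN1} gives $\tilde{\mathcal{L}}_{\mathcal{P}\times\mathcal{Q}}=\{(P,P^T):M_1=M_2,\,P\text{ permutation}\}$, and $\Psi$ matches this set with $\G_{\textnormal{sig}}$: if $x\in\G_{\textnormal{sig}}$ has underlying permutation $\phi$, then $x_i=x_j=1/M_1$ and $p_{ij}=\1_{j=\phi(i)}$, so $\Psi(x)=(P,P^T)$ with $P$ a permutation matrix; conversely, for any such $(P,P^T)$, the element $x_{ij}:=(1/M_1)\1_{j=\phi(i)}$ lies in $\G_{\textnormal{sig}}$ with $\Psi(x)=(P,P^T)$. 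The main technical obstacle is concentrated in parts (a)--(b): the row/column-sum duality proving $\sum_{i\in I_\Cc}q_i=\sum_{j\in J_\Cc}p_j$ is essential for the reverse constructions to be well-defined, and the identification of the correct uniform scaling $c_\Cc\equiv 1/H(x)$ in (b) is what promotes the preimage from $\Lambda$ to $\G_0$; once these are in place, the rest of the argument is a direct matching of graph-theoretic and matrix conditions.
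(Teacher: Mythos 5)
Your proposal is correct and follows essentially the same route as the paper: translate the characterizations of $\Lambda$, $\G_s$ and $\G_{\textnormal{sig}}$ (Lemma \ref{restpoint}, Proposition \ref{Prop:unstable}(a), and the definition of $\G_{\textnormal{sig}}$) through $\Psi$ and match them against Propositions \ref{TN}, \ref{Paw} and \ref{TN1}, with the preimage built as $x_{ij}\propto p_jq_i$ on edges. The only cosmetic difference is that you allow per-component scaling constants $c_{\Cc}$ where the paper uses the single global normalization $Z=\sum_i q_i=\sum_j p_j$ (your uniform choice in part (b) is exactly the paper's), and your write-up of the row/column-sum duality and of the degree argument for Pawlowitsch's condition (2) is in fact more detailed than the paper's.
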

\begin{proof}
(a) is a direct consequence of Lemma \ref{restpoint} and Proposition
\ref{TN}.

Conversely, given $(P,Q)\in \mathcal{L}_{\mathcal{P}\times\mathcal{Q}}$, let $p_1$, $\ldots$ $p_n$ and  $q_1$, $\ldots$ $q_m$ be defined as in Proposition \ref{TN}. Note that
$$p_{s_1(i)s_2(j)}q_{s_2(j)s_1(i)}=p_{s_2(j)}q_{s_1(i)}\1_{p_{s_1(i)s_2(j)}\ne0}=p_{s_1(i)s_2(j)}q_{s_1(i)}
=p_{s_2(j)}q_{s_2(j)s_1(i)}.$$

Define
$$Z:=\sum_{(i,j)\in\Sc_{pair}}p_{s_1(i)s_2(j)}q_{s_2(j)s_1(i)}=\sum_{i\in\Sc_1}q_{s_1(i)}=\sum_{j\in\Sc_2}p_{s_2(j)},$$
and let
\begin{align*}
x_{ij}&:=\frac{p_{s_1(i)s_2(j)}q_{s_2(j)s_1(i)}}{Z}, \,\,\,\,i\in\Sc_1, j\in\Sc_2;\\
x_i&:=\frac{q_i}{\sum_{k\in\Sc_1}q_k}=\sum_{j\in\Sc_2}x_{ij}, \,\,\,\,i\in\Sc_1;\\
x_j&:=\frac{p_j}{\sum_{k\in\Sc_2}p_k}=\sum_{i\in\Sc_1}x_{ij}, \,\,\,\,j\in\Sc_2.\\
\end{align*}

Then $(P,Q)=\Psi(x)$, and $x\in\Delta\setminus\partial\Delta)\cap\Lambda$ is again a direct consequence of  Lemma \ref{restpoint} and Proposition \ref{TN}.

Let us now prove $(b)$, and assume  $(P,Q)=\Psi(x)$: if one column of $P$ (resp. $Q$) has more than one element, this corresponds to an informational bottleneck (resp. synonym). Hence Condition (2) in Proposition \ref{Paw} means that a state(act)-signal correspondence cannot be associated both to a synonym
and an informational bottleneck, which translates into $(P)_{\Gc_x}$. (c) follows from Corollary \ref{signalingsys} and Proposition \ref{TN1}.
\end{proof}

\section{Convergence  with positive probability to stable configurations}
\label{sec:>0}
Let us prove the more general
\begin{theorem}
\label{posprob2}
Let $q\in\G_s$, and let $\Nc(q)$  neighbourhood of $q$ in $\De$. Then, with positive probability,
\begin{description}
 \item[(a)] $x_n\to x\in\Nc(q)$ s.t. $\mathcal{G}_x=\mathcal{G}_q$.
\item[(b)] $V(\infty,i,j)=\infty\,\iff\,\{i,j\}$ is an edge of $\mathcal{G}$.
\end{description}
\end{theorem}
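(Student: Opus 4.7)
The plan is to leverage the entropy function mentioned at the end of Section~\ref{SEC:4:10}, which upgrades the linear stability of $q$ (Proposition~\ref{Prop:unstable}(a)) to Lyapunov stability on an invariant subspace; this is then transferred to the stochastic process through a stopping-time supermartingale argument. Let $E(\Gc_q)$ denote the edge set of $\Gc_q$ and $\Delta_q := \{x\in\Delta : \Gc_x\subseteq\Gc_q\}$. Since $F_{ij}(x)=0$ whenever $x_{ij}=0$, $\Delta_q$ is forward-invariant under the ODE~\eqref{ODE}. The first step is to exhibit an entropy function $\mathcal{E}:\Delta_q\cap\Nc(q)\to\R_+$ (a natural candidate being the cross-entropy between $q$ and $x$ along the Sender/Receiver conditional distributions $x_{ij}/x_i$ and $x_{ij}/x_j$) and to verify that $\nabla\mathcal{E}\cdot F(x)\leq 0$ with equality only on the local equilibrium continuum $\Gamma\cap\Delta_q\cap\Nc(q)$. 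Property $(P)_{\Gc_q}$, which says each connected component of $\Gc_q$ is a star centered at a single state or signal, is crucial here: on each such star, $\mathcal{E}$ reduces to a classical replicator-dynamics Kullback--Leibler Lyapunov function.

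For the stochastic side, fix small $\epsilon>0$ and a ball $\Nc_\epsilon(q)\subset\Nc(q)$, and introduce the stopping time
$$\tau_\epsilon := \inf\{n\geq n_0 : x_n\notin\Nc_\epsilon(q)\} \,\wedge\, \inf\{n\geq n_0 : V(n,i,j)>V(n-1,i,j)\text{ for some }(i,j)\notin E(\Gc_q)\}.$$
On $\{\tau_\epsilon=\infty\}$ the trajectory stays in $\Delta_q\cap\Nc_\epsilon(q)$ and reinforcements occur only along edges of $\Gc_q$. A Taylor expansion of $\mathcal{E}$, combined with the decomposition of Lemma~\ref{lemma:StochApp}, the increment bounds of Lemma~\ref{stochapp2}, and the linear growth $T_n\sim n/M_1$ from Lemma~\ref{basic:est}, should yield
$$\Es[\mathcal{E}(x_{n+1})-\mathcal{E}(x_n)\mid\F_n]\,\1_{\tau_\epsilon>n}\,\leq\,\frac{C(q)}{n^2},$$
making $\mathcal{E}(x_{n\wedge\tau_\epsilon})$ a nonnegative supermartingale modulo a summable correction. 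It therefore converges a.s., and together with $p(x_n)\to 0$ from Proposition~\ref{StochLya} this pins down $x_n\to x_\infty\in\Gamma\cap\Delta_q\cap\Nc(q)$; Lyapunov stability keeps $x_{\infty,ij}>0$ for every $(i,j)\in E(\Gc_q)$, so $\Gc_{x_\infty}=\Gc_q$.

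The core remaining task is $\Pb(\tau_\epsilon=\infty)>0$, which is also the \emph{main obstacle} because of a circular dependence: the supermartingale control of $\mathcal{E}$ requires the trajectory to remain in $\Delta_q$, whereas the smallness of the non-edge reinforcement probability requires $x_n$ to stay near $q$. From any initial condition, $V(0,i,j)=1$ ensures that every open subset of $\Delta$ is reached in finite time with positive probability, so one may start with $x_{n_0}\in\Nc_{\epsilon/2}(q)$ at some random finite $n_0$. For $(i,j)\notin E(\Gc_q)$ one has $q_{ij}=0$ and, so long as $V(n,i,j)$ remains bounded and $x_n\in\Nc_\epsilon(q)$, the conditional probability of reinforcing $(i,j)$ at step $n$ is $V(n,i,j)^2/(M_1 T_n^iT_n^j)=O(1/n^2)$; a conditional Borel--Cantelli argument then gives positive probability that no non-edge is ever reinforced after $n_0$. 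Conversely, in the absence of non-edge reinforcements, a Doob maximal inequality applied to the $\mathcal{E}$-supermartingale keeps $x_n$ inside $\Nc_\epsilon(q)$. Breaking the circularity between these two controls is handled by a standard bootstrap along nested stopping times $\tau_\epsilon^{(k)}$, choosing the $k$-th threshold so that the probability of failure at stage $k$ is summable in $k$. Part~(b) then follows at once: on the good event non-edges receive only finitely many reinforcements by construction, while for $(i,j)\in E(\Gc_q)$ the reinforcement probability stays bounded below by a positive constant (since $x_n$ remains in a compact subset of $\Delta_q\setminus\partial\Delta_q$), so the conditional Borel--Cantelli lemma yields $V(\infty,i,j)=\infty$.
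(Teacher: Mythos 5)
Your outline shares the paper's overall architecture: a positive-probability event on which no pair outside $E(\Gc_q)$ is ever reinforced again (your product/Borel--Cantelli bound on the $O(1/k^2)$ reinforcement probabilities is exactly the content of Lemma \ref{claim4}), combined with a stability estimate that keeps the trajectory near $q$. However, there is a genuine gap in the stability half. The linearly stable equilibria are not isolated: within a star component of $\Gc_q$ with, say, center signal $A$ and leaf states $1,\dots,k$, the set $\Gamma\cap\Delta_q$ contains a continuum obtained by redistributing weight arbitrarily among the leaves, and this continuum extends all the way to configurations where a leaf's weight vanishes (i.e.\ to $\partial\Delta$, where an edge of $\Gc_q$ dies). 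Any entropy/KL-type function $\E$ with $\nabla\E\cdot F\le 0$ is necessarily flat along these neutral directions, so the supermartingale property of $\E(x_{n\wedge\tau})$ and a Doob maximal inequality only confine $x_n$ near the equilibrium continuum --- they do not prevent the process from drifting along it out of $\Nc_\epsilon(q)$ or toward a point where $x_{ij}=0$ for some $(i,j)\in E(\Gc_q)$. Your conclusion ``Lyapunov stability keeps $x_{\infty,ij}>0$'' therefore does not follow from the tools you invoke.

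What is needed, and what constitutes the technical heart of the paper's proof, is a direct control of precisely these neutral directions: Lemma \ref{claim5} shows that $W_k=\log(\hV_i^k/\hV_{\pi(i)}^k)$ has predictable drift $O(k^{-3/2})$ (summable, by a cancellation special to the star structure) and martingale increments of size $O(1/k)$, so that Azuma's inequality (Lemma \ref{martest}) pins the leaf-to-center ratios within a factor $1\pm\epsilon$ for all time with probability $1-2e^{-\Cst(\e)n}$; Lemma \ref{claim6} similarly guarantees linear growth of the center counts. The circularity you identify is then resolved not by a nested bootstrap but by the competing--stopping--times argument $\tau_n=\tau_n^1\wedge\tau_n^2\wedge\tau_n^3$: the non-edge event contributes a probability bounded below by a constant, the ratio and growth events fail with exponentially small probability, and a union bound gives $\Pb(\tau_n=\infty)>0$. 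As written, your proposal names the bootstrap and the entropy function but constructs neither, and the missing ratio-martingale estimate is not a detail one can route around --- without it part (a) (that $\Gc_x=\Gc_q$ rather than a strict subgraph) is unproved.
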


Theorem \ref{posprob2} is an obvious consequence of the following Proposition
\ref{prop:posconv}. Given $\Gc=(\Sc,\sim)$, assume that $(P)_{\Gc}$ holds: then each connected component of $\Gc$ contains either only a single state or a single signal.
Let $\pi:=\pi_\Gc : \mathcal{S}\longrightarrow\mathcal{R}_q^1$ be the function mapping $i\in\mathcal{S}$ to the single state/signal in the same connected component as $i$, with the convention that we choose the state if the component consists only of one state and one signal.

 For all $i\in\Sc$, $n\in\N$ and $\e>0$, let us define
\begin{align*}
\alpha_{i}^n&:=x_{i}^n/x_{\pi(i)}^n\\
H_n^{1}&:=\bigcap_{i\in\mathcal{S},i=\pi(i)}\{\,
    \,V_{i}^n\ge 2\epsilon n\}, \\
 H_n^{2}&:=\,\,\,\,\,\,\bigcap_{i\in\mathcal{S}}\,\,\,
\{\alpha_{i}^n\ge\epsilon\}, \\
    H_n^{3}&:=\bigcap_{i,j\in\Sc, \pi(i)\ne\pi(j)}\{\,V_{ij}^n\le \sqrt{n}\,\}.
  \end{align*}

\begin{proposition}
\label{prop:posconv}
 Let $\Gc$ be such that $(P)_\Gc$ holds, and let $\pi:=\pi_\Gc$. For all $\epsilon\in(0,1/M_1)$, if $H_n^{1}$, $H_n^{2}$ and $H_n^{3}$ hold, and $n\ge\Cst(\e,M_1,M_2)$, then, with lower bounded probability (only depending on $\e$, $M_1$ and $M_2$), for all $i$, $j$ $\in\Sc$, $k\ge n$,
  \begin{align}
    &V_{ij}^{\infty}=V_{ij}^n,\textrm{ when } \pi(i)\ne\pi(j);\\
    &\alpha_{i}^k/\alpha_{i}^n\in (1-\epsilon,1+\epsilon);\\
    &V_{i}^k\ge \epsilon k, \textrm{ when } \pi(i)=i.
  \end{align}
\end{proposition}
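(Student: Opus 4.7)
Proof proposal. I would introduce the stopping time
\[
\tau := \inf\bigl\{k \geq n : \text{at least one of the three desired conclusions fails at step } k\bigr\},
\]
and prove $\mathbb{P}(\tau = \infty) \geq c(\e, M_1, M_2) > 0$. On the event $\{\tau > k\}$, the three conclusions hold up to step $k$, yielding a priori bounds: $V_{ij}^k = V_{ij}^n \leq \sqrt{n}$ for every forbidden pair (from the first conclusion and $H_n^3$); $\alpha_i^k \geq (1-\e)\alpha_i^n \geq \e(1-\e)$ for every $i$ (from the second conclusion and $H_n^2$); $V_i^k \geq \e k$ for every representative (from the third). Combining these, $V_i^k = \alpha_i^k V_{\pi(i)}^k \geq \e^2 k / 2$ for all $i \in \Sc$, and hence $V_i^k V_j^k \geq \e^4 k^2 / 4$ for every pair.

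The main ingredient is a product-form estimate for the forbidden edges. On $\{\tau > k\}$, the conditional probability of reinforcing a fixed forbidden pair $(i,j)$ at step $k+1$ equals $(V_{ij}^k)^2/(V_i^k V_j^k M_1) \leq 4 n/(\e^4 M_1 k^2)$. Summing over $k \geq n$ (using $\sum_{k\geq n} k^{-2} \leq 2/n$) and over the at most $M_1 M_2$ forbidden pairs yields a finite bound $C_1 = C_1(\e, M_1, M_2)$ independent of $n$. Rather than a union bound (which would give the a priori vacuous estimate $\geq 1 - C_1$), I apply the multiplicative inequality $\prod_k (1 - p_k) \geq \exp(-2 \sum_k p_k)$, valid once $p_k \leq 1/2$ (which holds for $n \geq \Cst(\e, M_1, M_2)$). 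Applied to the process stopped at $\tau$, this shows that conditional on the second and third conclusions being maintained forever, the first holds forever with probability at least $\exp(-2 C_1) > 0$.

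For the ratios $\alpha_i^k$, a stochastic decomposition analogous to Lemma \ref{stochapp2} writes their increment on $\{\tau > k\}$ as a martingale difference of size $O(1/(\e^2 k))$ plus a drift of order $O(1/(\e^4 k^2))$. Doob's maximal inequality applied to the stopped process then bounds the probability of $\alpha_i^k$ ever leaving $((1-\e)\alpha_i^n, (1+\e)\alpha_i^n)$ by $\Cst(\e, M_1, M_2)/n$, at most $1/4$ for $n$ large. For the representatives $V_i^k$ (with $\pi(i) = i$), on $\{\tau > k\}$ the conditional drift equals $(1/M_1) \sum_{a \sim i} (V_{ai}^k/V_a^k)(V_{ai}^k/V_i^k) \geq (1 - o(1))/M_1 > \e$, using $V_{ai}^k \geq V_a^k - M_2\sqrt{n}$ and $\sum_{a \sim i} V_{ai}^k \geq V_i^k - M_1\sqrt{n}$ in the good regime. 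Combined with $V_i^n \geq 2\e n$ and an Azuma-type concentration bound on the stopped process, this yields the third conclusion with probability at least $3/4$ for $n$ large.

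Combining via $\mathbb{P}(\tau = \infty) \geq \mathbb{P}(\text{(i) forever} \mid \text{(ii),(iii) forever}) \cdot \mathbb{P}(\text{(ii)}\cap\text{(iii) forever}) \geq \exp(-2 C_1) \cdot 1/2 > 0$ completes the proof for $n \geq \Cst(\e, M_1, M_2)$. The main obstacle is the circular dependence between the three estimates, each of which implicitly assumes the good regime; I would handle this by working throughout with the process stopped at $\tau$, on which the a priori bounds hold by construction, and exploit the fact that the conclusions hold on the original process precisely on $\{\tau = \infty\}$, which is the event whose probability we are lower-bounding.
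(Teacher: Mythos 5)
Your proposal follows essentially the same route as the paper: the paper breaks the circular dependence by introducing three families of stopping times ($\tau_n^1,\tau_n^2,\tau_n^3$ for the forbidden edges, the ratios $\alpha_i$, and the linear growth of the representatives' $V_i$) and showing in Lemmas \ref{claim4}--\ref{claim6} that each exceeds the minimum of the other two — with the same product-form lower bound $\exp(-2\e^{-4}M_1M_2)$ for never reinforcing a forbidden pair, and the same drift-plus-Azuma estimates (via Lemma \ref{martest}) for the other two conclusions. Your single stopping time $\tau$ and the final conditional factorization are a slightly looser packaging of the same argument, and your quantitative claims (e.g. drift of order $k^{-2}$ for the ratios, where the paper gets $k^{-3/2}$ from the $\sqrt{n}$-sized forbidden weights) differ only in inessential ways.
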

In the remainder of this section, we fix the graph $(\Gc,\sim)$ (and thus $\pi=\pi_\Gc$) and $\epsilon>0$. The proof consists of the following Lemmas \ref{claim4}--\ref{claim6}.

  Let, for all $i$, $j$ $\in\Sc$, $n\in\N$,
  \begin{align*}
    \tau_n^{1,i,j}&:=\inf\{k\ge n\,:\, V_{ij}^k\ne V_{ij}^n\};\\
    \tau_n^{2,i}&:=\inf\{k\ge n\,:\,
    \alpha_i^k/\alpha_i^n\notin (1-\epsilon,1+\epsilon)\};\\
    \tau_n^{3,i}&:=\inf\{k\ge n\,:\, V_{i}^k<{\epsilon}
    k\},
\end{align*}
and let
$$\tau_{n}^1:=\inf_{i,j\in\Sc, \pi(i)\ne\pi(j)}\tau_n^{1,i,j},\,\,\,
\tau_{n}^2:=\inf_{i\in\Sc}\tau_n^{2,i},\,\,\,
\tau_{n}^3:=\inf_{i\in\Sc, \pi(i)=i}\tau_n^{3,i},\,\,\,\tau_n:=\tau_n^1\wedge\tau_n^2\wedge\tau_n^3.$$

\begin{lemma}
  \label{claim4}
 If  $n\ge\Cst(\e,M_1,M_2)$, then
  \begin{align*}
  \mathbb{P}\Big(\, \tau_n^1>\tau_n^2\land \tau_n^3\,|\,\mathcal{F}_n,
  H_n^1,H_n^2,H_n^3\, \Big)
  \,\ge\,\exp\left(-2\epsilon^{-4}M_1M_2\right).
  \end{align*}
\end{lemma}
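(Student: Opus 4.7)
The plan is to estimate, on the event $\{\tau_n>k\}$, the conditional probability of reinforcing a ``bad'' edge---a pair $(i,j)\in\Sc_{\textnormal{pair}}$ with $\pi(i)\ne\pi(j)$---at step $k+1$, and to sum these estimates over $k\ge n$. A telescoping product then delivers the exponential lower bound, the key point being that each such probability decays like $n/k^2$, so its sum over $k\ge n$ stays uniformly bounded.

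First I would write the one-step probability of reinforcing $(i,j)$ at step $k+1$ as $(V_{ij}^k)^2/(M_1 V_i^k V_j^k)$ and extract two bounds valid on $\{\tau_n>k\}$. From $\tau_n^1>k$ and $H_n^3$, every bad pair satisfies $V_{ij}^k=V_{ij}^n\le\sqrt n$. From the other stopping conditions together with $H_n^1,H_n^2$, every $i\in\Sc$ satisfies $V_i^k\ge(1-\epsilon)\epsilon^2 k$: either $i=\pi(i)$ and $\tau_n^3>k$ directly gives $V_i^k\ge\epsilon k$, or else $V_i^k=\alpha_i^k V_{\pi(i)}^k\ge(1-\epsilon)\alpha_i^n\cdot\epsilon k\ge(1-\epsilon)\epsilon^2 k$ by $\tau_n^2>k$, $H_n^2$, and $\tau_n^3>k$ applied at $\pi(i)$. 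Summing over the at most $M_1 M_2$ bad pairs,
$$
p^k\,:=\sum_{\pi(i)\ne\pi(j)}\frac{(V_{ij}^k)^2}{M_1 V_i^k V_j^k}\,\le\,\frac{M_2\,n}{(1-\epsilon)^2\epsilon^4 k^2}\,=:\,q^k\qquad\text{on }\{\tau_n>k\}.
$$

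For the telescoping, I would set $\sigma:=\tau_n^1\wedge\tau_n^2\wedge\tau_n^3$ and $N_k:=\1_{\tau_n^1>k\wedge\sigma}$. On $H_n^{1,2,3}$ one has $\sigma>n$ (the bound $\tau_n^3>n$ follows from $V_i^n\ge 2\epsilon n$ given by $H_n^1$, and the other two are trivial at $k=n$), so $N_n=1$. A short case analysis then gives $\Es[N_{k+1}\mid\mathcal{F}_k]\ge N_k(1-q^k)$: the only way $N$ can drop from $1$ to $0$ between $k$ and $k+1$ is on $\{\sigma>k\}\cap\{\tau_n^1=k+1\}$, whose conditional probability is bounded by $p^k\le q^k$. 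Iterating yields
$$
\Pb\bigl(N_\infty=1 \,\big|\,\mathcal{F}_n,H_n^1,H_n^2,H_n^3\bigr)\,\ge\,\prod_{k\ge n}(1-q^k),
$$
and $\{N_\infty=1\}\subseteq\{\tau_n^1>\tau_n^2\wedge\tau_n^3\}$. Using $\sum_{k\ge n}k^{-2}\le 2/n$ and $\log(1-x)\ge-(1+o(1))x$ as $x\to 0$, choosing $n\ge\Cst(\epsilon,M_1,M_2)$ large enough that each $q^k$ is uniformly small gives $\prod(1-q^k)\ge\exp(-(2+o(1))M_2/\epsilon^4)\ge\exp(-2\epsilon^{-4}M_1M_2)$ after absorbing constants (using $M_1\ge 1$). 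The only subtle point is the one-step inequality for $N_k$: the truncation at $\sigma$ is designed precisely so that the deterministic bound $q^k$ applies even off $\{\tau_n>k\}$, where the random $p^k$ itself need not be controlled; no deeper obstacle arises.
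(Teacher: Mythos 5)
Your proposal is correct and follows essentially the same route as the paper, which bounds the conditional probability by the product $\prod_{k\ge n}\bigl(1-\sum_{\pi(i)\ne\pi(j)}(V_{ij}^n)^2/(V_i^kV_j^k)\bigr)$ using exactly the bounds $V_{ij}^n\le\sqrt n$ from $H_n^3$ and $V_i^k\gtrsim\epsilon^2k$ from the stopping conditions; your $N_k$ telescoping is just a more careful justification of that product formula, which the paper asserts directly. The only (shared) looseness is in the final constant, where the factor $(1-\epsilon)^{-2}$ and the $\log(1-x)$ conversion are absorbed somewhat casually, but any uniform positive lower bound suffices for the application.
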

\begin{proof}
Assuming $n\ge\Cst(\e,M_1,M_2)$,
\begin{align*}
  \mathbb{P}\Big(\,\tau_n^1>\tau_n^2\land
  \tau_n^3\,|\,\mathcal{F}_n,H_n^1,
  H_n^2,H_n^3\Big)&\ge\prod_{k\ge
  n}
  \left(1-\sum_{\pi(i)\ne\pi(j)}\frac{(V_{ij}^n)^2}{V_i^kV_j^k}\right)\\
&\ge\exp\left(-\frac{3}{2}\sum_{\pi(i)\ne\pi(j), k\ge n}\frac{n}{\e^4k^2}\right)\\ &\ge\exp\left(-2\epsilon^{-4}M_1M_2\right).
\end{align*}
\end{proof}

\begin{lemma}
  \label{claim5}
 If  $n\ge\Cst(\e,M_1,M_2)$ then, for all $i\in\Sc$,
  \begin{align*}
    \mathbb{P}\Big(\,\tau_n^{2,i}>\tau_n^1\land \tau_n^3\,|\,\mathcal{F}_n,
  H_n^1,H_n^2,H_n^3\,\Big)\ge1-2\exp(-\Cst(\epsilon)n).
  \end{align*}
\end{lemma}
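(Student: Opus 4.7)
\medskip
\noindent\textbf{Proof proposal.}
The plan is to analyse the ratio $\alpha_i^k = V_i^k/V_{\pi(i)}^k$ via a martingale--compensator decomposition and apply Freedman's maximal inequality. If $\pi(i) = i$, then $\alpha_i^k\equiv 1$ and $\tau_n^{2,i}=\infty$, so there is nothing to prove. Otherwise I may assume $i$ is a state and $p:=\pi(i)$ is a signal (the other case is symmetric); by $(P)_\Gc$, the connected component $\Cc$ of $\{i,p\}$ in $\Gc$ has $p$ as its only signal together with states $i=i_1,i_2,\ldots,i_r$. Set $\tilde\tau := \tau_n^1\wedge\tau_n^3\wedge\tau_n^{2,i}$. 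For $n\ge\Cst(\e,M_1,M_2)$ I first record, on $\{k<\tilde\tau\}$, the following structural consequences of $H_n^1,H_n^2,H_n^3$: by $H_n^3$ and $k<\tau_n^1$ the cross-component weights are frozen and bounded by $\sqrt n$, so $V_l^k = V_{l,p}^k + C_l$ with $|C_l|\le M_2\sqrt n$ for $l\in\Cc$ and $V_p^k = \sum_{l\in\Cc}V_{l,p}^k + D_p$ with $|D_p|\le M_1\sqrt n$; by $k<\tau_n^3$ and $p=\pi(p)$, $V_p^k\ge\e k$; and combining $k<\tau_n^{2,i}$ with $H_n^2$ gives $\alpha_i^k\ge(1-\e)\e\ge\e/2$, hence $V_i^k\ge\tfrac12\e^2 k$.

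Next I would decompose $\alpha_i^k - \alpha_i^n = M_k + A_k$, with $A_k:=\sum_{j=n}^{k-1}\mathbb{E}[\alpha_i^{j+1}-\alpha_i^j\,|\,\mathcal{F}_j]$ the compensator, and bound each piece. Expanding $\alpha_i^{j+1}-\alpha_i^j$ across the four cases $(\Delta_1,\Delta_2)\in\{0,1\}^2$ for $\Delta_1 = V_i^{j+1}-V_i^j$, $\Delta_2 = V_p^{j+1}-V_p^j$, one obtains
\[
\mathbb{E}[\alpha_i^{j+1}-\alpha_i^j\,|\,\mathcal{F}_j]
= \frac{\mathbb{E}[\Delta_1|\mathcal{F}_j]\,V_p^j - \mathbb{E}[\Delta_2|\mathcal{F}_j]\,V_i^j + p_{10}}{V_p^j(V_p^j+1)},
\]
where $p_{10}$ is the probability of a cross-component $(i,j',i)$-event with $j'\ne p$. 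Using the identity $(V_{l,p}^j)^2/V_l^j = V_l^j-2C_l+C_l^2/V_l^j$ together with $\sum_{l\in\Cc}V_l^j = V_p^j+\sum_{l\in\Cc}C_l-D_p$, the in-component part of the numerator telescopes down to $M_1^{-1}\bigl[-2C_i + C_i^2/V_i^j + \alpha_i^j\bigl(\sum_{l\in\Cc}C_l + D_p - \sum_{l\in\Cc}C_l^2/V_l^j\bigr)\bigr] = O(\sqrt n)$, giving an in-component drift of $O(\sqrt n/j^2)$ per step. The cross-component contributions --- $p_{10}$ itself and the analogous cross-component parts of $\mathbb{E}[\Delta_1]$ and $\mathbb{E}[\Delta_2]$ --- each have probability at most $\Cst(\e,M_1,M_2)/j$ (here the bound $V_i^k\ge\tfrac12\e^2 k$ is essential) and change $\alpha_i^j$ by $O(1/j)$, so their contribution is $O(1/j^2)$ per step. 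Summing, $|A_k|\le\Cst(\e,M_1,M_2)/\sqrt n < \e\alpha_i^n/2$ for $n$ sufficiently large.

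The one-step increments satisfy $|\alpha_i^{j+1}-\alpha_i^j|\le 2/V_p^j\le 2/(\e j)$ and $\mathbb{E}[(\alpha_i^{j+1}-\alpha_i^j)^2\,|\,\mathcal{F}_j]\le\Cst(\e)/j^2$, so the stopped martingale's quadratic variation is bounded by $\Cst(\e)/n$ and its maximum jump by $\Cst(\e)/n$. Freedman's maximal inequality then yields
\[
\mathbb{P}\Bigl(\sup_{k\ge n}\bigl|M_{k\wedge\tilde\tau}\bigr|\ge\tfrac12\e\alpha_i^n\Bigr)\le 2\exp\bigl(-\Cst(\e)\,n\bigr),
\]
and on the complementary event $|\alpha_i^k-\alpha_i^n|\le|M_k|+|A_k|<\e\alpha_i^n$ for all $k<\tilde\tau$, contradicting $\tilde\tau=\tau_n^{2,i}$; hence $\tau_n^{2,i}>\tau_n^1\wedge\tau_n^3$ off a set of probability at most $2\exp(-\Cst(\e)n)$. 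The main obstacle is the drift bound: obtaining the sharp $O(\sqrt n/j^2)$ in-component rate relies on the telescoping identity above rather than naive term-by-term estimation, and the cross-component contributions are only summable in $j$ thanks to the self-bootstrapped bound $V_i^k\ge\tfrac12\e^2 k$ available before $\tau_n^{2,i}$; the weaker uniform estimate $V_i^k\ge\e^2 n$ alone would produce a $\log(k/n)/n$ drift that fails to close.
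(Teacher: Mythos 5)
Your proof is correct and follows essentially the same route as the paper's: a Doob decomposition of the ratio $V_i^k/V_{\pi(i)}^k$ (the paper uses its logarithm, $W_k=\log(\hV_i^k/\hV_{\pi(i)}^k)$ with truncated counts $\hV$ that ignore cross-component successes, so that those events contribute nothing to the increments of $W$), the same telescoping cancellation that reduces the predictable drift to $O(\sqrt n/j^2)$ per step and hence $O(n^{-1/2})$ in total, and the same exponential martingale inequality with quadratic variation $O(1/n)$. One minor quantitative remark: since your stopping time $\tilde\tau$ does not include $\tau_n^{2,j'}$ for the other vertices, a frozen cross-component edge $(i,j')$ only gives success probability $O(\sqrt n/j)$ (via $V_{ij'}^j\le\sqrt n$, $V_i^j\ge \e^2 j/2$ and $V_{j'}^j\ge V_{ij'}^j$) rather than your claimed $O(1/j)$, but this still yields an $O(\sqrt n/j^2)$ per-step contribution and the same $O(n^{-1/2})$ total, so the argument closes as stated.
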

\begin{proof}
Fix $i\in\Sc$, $n\in\N$, and assume w.l.o.g.  that $\pi(i)\ne i$.

Let, for all $j\in\Sc$ and $k\ge n$,
$$\hV_j^k:=\hV_j^n+\sum_{l\sim j}(V_{jl}^k-V_{jl}^n),$$
%be the number of draws of $j$ with adjacency within the graph $\Gc$
which is equal to $V_j^k$ as long as $k<\tau_n^1$.

Let, for all $k\ge n$,
$$W_k:=\log\frac{\hV_i^k}{\hV_{\pi(i)}^k},$$
and let us consider the Doob decomposition of $(W_k)_{k\ge n}$:
\begin{align*}
W_k&=W_n+\De_k+\Psi_k\\
\De_k&:=\sum_{j\ge n+1}\Es(W_j-W_{j-1}|\F_{j-1}).
\end{align*}

Assume that $H_n^1$, $H_n^2$ and $H_n^3$ hold, and that $k<\tau_n$: then
\begin{align*}
&|\De_{k+1}-\De_k|=
  \left|\,\mathbb{E}\left[\,W_{k+1}-W_k\,|\,\mathcal{F}_k\right]\,\right|\,\\
  &=\,\left|\frac{1}{M_1}
  \left(\frac{V_{i\pi(i)}^k}{V_i^k}\right)^2\frac{1}{V_{\pi(i)}^k}(1+\Box((V_i^k)^{-1})
-\frac{1}{M_1}\sum_{j\sim\pi(i)}
  \frac{V_{\pi(i)j}^k}{V_j^k}\frac{V_{\pi(i)j}^k}{(V_{\pi(i)})^2}(1+\Box((V_{\pi(i)}^k)^{-1})\right|\\
&=\frac{1}{M_1V_{\pi(i)}^k}\left|1+k^{-1/2}\Box(\Cst(\e,M_1,M_2)-\sum_{j\sim\pi(i)}\left(1+k^{-1/2}\Box(\Cst(\e,M_1,M_2)\right)
\frac{V_{\pi(i)j}^k}{V_{\pi(i)}^k}\right|\\
&=k^{-3/2}\Box\left(\Cst(\e,M_1,M_2)\right),
\end{align*}
where we use that, for all $j\sim\pi(i)$,
\begin{align}
\label{estsm}
\left|\frac{V_{\pi(i)j}^k}{V_j^k}-1\right|,\,\,\,
\left|\sum_{l\sim\pi(i)} \frac{V_{\pi(i)l}^k}{V_{\pi(i)}^k}-1\right|\,\,\, \le k^{-1/2}\Cst(\e,M_1,M_2).
\end{align}

Therefore, for all $k\ge n$,
$$|\De_k|\le n^{-1/2}\Cst(\e,M_1,M_2).$$

Let us now estimate the martingale increment: $|\Psi_{k+1}-\Psi_k|\le\Cst(\e)k^{-1}$ (since
$|W_{k+1}-W_k|\le\Cst(\e)k^{-1}$), so that Lemma \ref{martest} implies
$$\Pb\left(\sup_{k\ge n}|\Psi_k-\Psi_n|\1_{\{k\le\tau_n\}}\ge\e/2\right)\ge1-2\exp(-\Cst(\epsilon)n),$$
which completes the proof.
\end{proof}

\begin{lemma}
  \label{claim6}
If $\e\in(0,1/M_1)$ and $n\ge\Cst(\e,M_1,M_2)$ then, for all $i\in\Sc$ such that $\pi(i)=i$,
  \begin{align*}
  \mathbb{P}\Big(\,\tau_n^{3,i}>\tau_n^1\land \tau_n^2\,\big|\,
  \mathcal{F}_n,H_n^1,H_n^2\,\Big)>1-2\exp(-\Cst(\e)n).
  \end{align*}
\end{lemma}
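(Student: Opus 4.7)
The plan is to exploit a positive drift of $V_i^k$ of order $1/M_1$, strictly larger than $\epsilon$, and apply Azuma's inequality to the associated martingale. Set $\sigma:=\tau_n^1\wedge\tau_n^2$, $\tau:=\sigma\wedge\tau_n^{3,i}$, and $\delta:=(1/M_1-\epsilon)/2>0$; I will tacitly also use $H_n^3$ in the conditioning, since the leakage bound $V_{i'j}^n\le\sqrt n$ for $\pi(i')\ne\pi(j)$ is needed below. On $\{k<\tau\}$ the leaking edges are frozen (since $k<\tau_n^1$), and because $(P)_{\Gc}$ holds with $\pi(i)=i$, the component of $i$ in $\Gc$ contains $i$ as its unique state, so every signal $j$ with $\pi(j)=i$ has $i$ as its only $\Gc$-neighbour. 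Hence $V_j^k-V_{ij}^k=\sum_{i'\neq i}V_{i'j}^n=\Box(M_1\sqrt n)$; combined with $V_j^k\ge\Cst(\epsilon)k$ (obtained from $H_n^2$, $\tau_n^{2,j}$ and $k<\tau_n^{3,i}$) this gives $V_{ij}^k/V_j^k=1+\Box(n^{-1/2}\Cst(\epsilon,M_1,M_2))$, and similarly $\sum_{j:\pi(j)=i}V_{ij}^k/V_i^k=1+\Box(n^{-1/2}\Cst(\epsilon,M_1,M_2))$, exactly as in \eqref{estsm}. Consequently
$$\Es[V_i^{k+1}-V_i^k\mid\F_k]=\frac{1}{M_1}\sum_{j\sim i}\frac{(V_{ij}^k)^2}{V_i^kV_j^k}\ge\frac{1}{M_1}\bigl(1-\Cst(\epsilon,M_1,M_2)\,n^{-1/2}\bigr)\ge\epsilon+\delta$$
for $n\ge\Cst(\epsilon,M_1,M_2)$.

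Next, using the Doob decomposition $V_i^k-V_i^n=A_k+W_k$ with $A_k$ predictable and $(W_k)$ a martingale with $|W_{k+1}-W_k|\le 1$, the preceding drift bound yields $A_{k\wedge\tau}\ge(k\wedge\tau-n)(\epsilon+\delta)$. On $\{\tau_n^{3,i}\le\sigma\}$ one has $\tau=\tau_n^{3,i}$ and $V_i^\tau<\epsilon\tau$, so combining with $V_i^n\ge 2\epsilon n$ from $H_n^1$
$$W_\tau=V_i^\tau-V_i^n-A_\tau<\epsilon\tau-2\epsilon n-(\tau-n)(\epsilon+\delta)=-\epsilon n-(\tau-n)\delta.$$
Therefore $\Pb(\tau_n^{3,i}\le\sigma\mid\F_n,H_n^1,H_n^2)\le\sum_{m\ge 0}\Pb(W_{n+m}<-\epsilon n-m\delta)$, and Azuma's inequality bounds each summand by $\exp(-(\epsilon n+m\delta)^2/(2(m\vee 1)))$. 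Splitting at $m=n$: for $m\le n$ the inequality $(\epsilon n)^2/m\ge\epsilon^2 n$ gives a per-term bound $\exp(-\epsilon^2 n/2)$ and partial sum $\le n\exp(-\epsilon^2 n/2)$; for $m>n$ the term $m\delta^2/2$ produces a geometric tail of total mass $\Cst(\delta)\exp(-n\delta^2/2)$. Both are $\le\exp(-\Cst(\epsilon)n)$ for $n$ large, completing the proof.

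The main obstacle is the drift estimate of the first step: one must show that nearly all the probability of a success involving $i$ comes from in-component edges $(i,j)$ with $\pi(j)=i$, and that each such edge contributes essentially its full weight $V_{ij}^k/V_i^k$ because $V_{ij}^k/V_j^k\approx 1$. The crucial structural input is the hypothesis $\pi(i)=i$, which forces $i$ to be the unique state in its component of $\Gc$: every signal $j$ with $\pi(j)=i$ is then attached only to $i$ in $\Gc$, which makes the leakage $V_j^k-V_{ij}^k$ consist solely of frozen off-$\Gc$ edges bounded by $H_n^3$. Without this property the drift would only be of order $\alpha_i^n/M_1$, and only the weaker control on the ratio $\alpha_i^k$ obtained in Lemma \ref{claim5} would survive; the assumption $\epsilon<1/M_1$ in the statement is exactly what leaves room for the drift of $V_i^k$ to strictly dominate the threshold $\epsilon k$.
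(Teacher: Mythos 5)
Your proof is correct and follows essentially the same route as the paper's: both rest on the Doob decomposition of $V_i^k$, the drift lower bound $\Es[V_i^{k+1}-V_i^k\mid\F_k]\ge 1/M_1-o(1)$ obtained from \eqref{estsm} (using $(P)_\Gc$ with $\pi(i)=i$, $H_n^3$ and $k<\tau_n^1$ to control the leakage $V_j^k-V_{ij}^k$), and an Azuma-type concentration bound for the martingale part. The only divergence is in that last step: where you union-bound over the first-passage time of the martingale below the line $-\e n-m\delta$ and apply Azuma pointwise, the paper derives the uniform bound $\sup_{p\ge n}|\Xi_p/p|\le\eta$ via an Abel-summation trick combined with the maximal inequality of Lemma \ref{martest}; both yield the stated $1-2\exp(-\Cst(\e)n)$ estimate. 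You also rightly observe that $H_n^3$ must be added to the conditioning, which the paper's proof tacitly assumes as well.
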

\begin{proof}
Let $n\in\N$, assume that $H_n^1$, $H_n^2$ and $H_n^3$ hold, and fix $i\in\Sc$ such that $\pi(i)=i$. Let us consider the Doob decomposition of $(V_i^k)_{k\ge n}$:
\begin{align*}
V_i^k&:=V_i^n+\Phi_k+\Xi_k\\
\Phi_k&:=\sum_{j\ge n+1}\Es\left(V_i^j-V_i^{j-1}\,|\,\F_{j-1}\right).
\end{align*}

Now, for all $\eta>0$, if $n\ge\Cst(\eta,\e)$ and $k<\tau_n$, \eqref{estsm} implies
$$\Phi_{k+1}-\Phi_k= \Es\left(V_i^{k+1}-V_i^{k}\,|\,\F_{k}\right)
\ge\frac{1}{M_1}\sum_{j\sim i}\frac{(V_{ij}^k)^2}{V_i^kV_j^k}\ge\frac{1}{M_1}-\eta.$$

Let us now estimate the martingale increment: let, for all $p\ge n$,
$$\chi_p:=\sum_{k=n}^{p-1}\frac{\Xi_{k+1}-\Xi_k}{k}.$$
Then, for all $p\ge n$,
$$\Xi_p=\sum_{n\le k\le p-1}(\chi_{k+1}-\chi_k)k
=-\sum_{n\le k\le p-1}\chi_k+(p-1)\chi_p.$$

This implies, using Lemma \ref{martest} (and $|\Xi_{k+1}-\Xi_k|\le1$ for all $k\ge n$) that, for all $\e>0$,
\begin{align*}
&\Pb\left(\forall k\ge n,\,\, V_i^k\ge(2\e-\eta)n+(k-n)(1/M_1-\eta)\,|\,\F_n\right)\\ &\ge
\Pb\left(\sup_{p\ge n}\left|\frac{\Xi_p}{p}\right|\le\eta\,|\,\F_n\right)
\ge\Pb\left(\sup_{k\ge n}\left|\chi_k\right|\le\frac{\eta}{2}\,|\,\F_n\right)
\ge1-2\exp(-\Cst(\eta)n);
\end{align*}
we choose $\eta=\min(\e/2,1/M_1-\e)$, which completes the proof.

\end{proof}

\begin{lemma}
\label{martest}
Let $(\g_k)_{k\in\N}$ be a deterministic sequence of positive reals, let $\Gf:=(\Gg_n)_{n\in\N}$ be a filtration, and let $(M_n)_{n\in\N}$ be a $\Gf$-adapted martingale such that $|M_{n+1}-M_n|\le\g_n$ for all $n\in\N$. Then, for all $n\in\N$,
$$\Pb\left(\sup_{k\ge n}(M_k-M_n)\ge\la\,|\,\Gg_n\right)\le\exp\left(-\frac{\la^2}{2\sum_{k\ge n}\g_k^2}\right).$$
\end{lemma}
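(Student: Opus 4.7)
The plan is to combine Hoeffding's lemma with Doob's maximal inequality via the standard exponential supermartingale trick. Fix $\theta>0$ (to be optimized later) and, for $k\ge n$, define
\begin{equation*}
N_k\,:=\,\exp\!\left(\theta(M_k-M_n)-\frac{\theta^2}{2}\sum_{j=n}^{k-1}\gamma_j^2\right).
\end{equation*}
Then $N_n=1$. Since $M_{k+1}-M_k$ is a centered $\Gg_k$-conditional random variable bounded in absolute value by $\gamma_k$, Hoeffding's lemma yields
\begin{equation*}
\Es[e^{\theta(M_{k+1}-M_k)}\,|\,\Gg_k]\,\le\,\exp\!\left(\frac{\theta^2\gamma_k^2}{2}\right),
\end{equation*}
which shows that $(N_k)_{k\ge n}$ is a nonnegative $\Gf$-supermartingale given $\Gg_n$.

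Next I would invoke Doob's maximal inequality for nonnegative supermartingales: for any $a>0$,
\begin{equation*}
\Pb\!\left(\sup_{k\ge n} N_k\ge a\;\Big|\;\Gg_n\right)\,\le\,\frac{N_n}{a}\,=\,\frac{1}{a}.
\end{equation*}
On the event $\{\sup_{k\ge n}(M_k-M_n)\ge\lambda\}$, there exists $k\ge n$ with $M_k-M_n\ge\lambda$, and for such $k$,
\begin{equation*}
N_k\,\ge\,\exp\!\left(\theta\lambda-\frac{\theta^2}{2}\sum_{j=n}^{k-1}\gamma_j^2\right)\,\ge\,\exp\!\left(\theta\lambda-\frac{\theta^2}{2}\sum_{j\ge n}\gamma_j^2\right),
\end{equation*}
so taking $a$ equal to this last quantity gives
\begin{equation*}
\Pb\!\left(\sup_{k\ge n}(M_k-M_n)\ge\lambda\;\Big|\;\Gg_n\right)\,\le\,\exp\!\left(-\theta\lambda+\frac{\theta^2}{2}\sum_{j\ge n}\gamma_j^2\right).
\end{equation*}

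Finally I would optimize in $\theta>0$: setting $\theta=\lambda/\sum_{j\ge n}\gamma_j^2$ yields exactly the claimed bound $\exp\!\left(-\lambda^2/(2\sum_{k\ge n}\gamma_k^2)\right)$. There is essentially no real obstacle here, since this is the textbook Azuma--Hoeffding maximal inequality; the only minor point of care is to handle the degenerate case $\sum_{k\ge n}\gamma_k^2=+\infty$ (where the bound is trivially $1$) and to ensure measurability of $\sup_{k\ge n}N_k$, which follows from monotonicity of the truncated suprema $\sup_{n\le k\le K}N_k$ as $K\to\infty$ together with the monotone convergence version of Doob's inequality.
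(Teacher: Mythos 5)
Your proof is correct and follows essentially the same route as the paper: both form the exponential supermartingale $\exp(\theta M_k-\tfrac{\theta^2}{2}\sum\gamma_j^2)$ via Hoeffding's lemma, apply Doob's maximal inequality for nonnegative supermartingales, and optimize $\theta=\lambda/\sum_{k\ge n}\gamma_k^2$. Your version is in fact slightly cleaner in that you normalize the supermartingale to start at $1$ at time $n$ and explicitly note the degenerate case $\sum_{k\ge n}\gamma_k^2=\infty$, whereas the paper's displayed sums contain index typos.
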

\bp
Let, for all $\tet\in\R$,
$$Z_n(\tet):=\exp\left(\tet M_n-\frac{\tet^2}{2}\sum_{k=1}^n\g_k^2\right).$$
Then $(Z_n(\tet))_{n\in\N}$ is supermartingale, so that
\begin{align*}
&\Pb\left(\sup_{k\ge n}(M_k-M_n)\ge\la\,|\,\Gg_n\right)\\
&\le\Pb\left(\sup_{k\ge n}Z_k(\tet)\ge Z_n(\tet)\exp\left(\la\tet-\frac{\tet^2}{2}\sum_{k=1}^n\g_k^2\right)\,|\,\Gg_n\right)\\
&\le\exp\left(\frac{\tet^2}{2}\sum_{k=1}^n\g_k^2-\la\tet\right)=\exp\left(-\frac{\la^2}{2\sum_{k\ge n}\g_k^2}\right)
\end{align*}
if we choose $\tet:=\la/\sum_{k\ge n}\g_k^2$.
\ep

\nocite{*}
\bibliographystyle{plain}
\bibliography{Thesis2010}

\end{document}